\theoremstyle{plain}
 \theoremstyle{remark} 
\newcommand{\Z}{{\mathbb Z}}
\newcommand{\inner}[1]{\langle #1 \rangle}
\newtheorem {theo} {\bf Theorem} [section]
\newtheorem {coro} [theo] {\bf Corollary}
\newtheorem {lem} [theo] {\bf Lemma}
\newtheorem {defi} {\bf Definition}[section]
\newcommand{\R}{\mathbb R\,}
\newcommand{\ip}[2]{\langle\,#1\, , \, #2 \, \rangle}
\numberwithin{equation}{section}
\begin{document}
\title{A duality principle for groups  II: Multi-frames meet super-frames}

\author{Randu Balan}\address{Department of Mathematics\\
University of Maryland\\ College Park, MD 20742}
\email{rvbalan@math.umd.edu}
\author{Dorin Dutkay}\address{Department of Mathematics\\
University of Central Florida\\ Orlando, FL 32816}
\email{ddutkay@mail.ucf.edu}
\author{Deguang Han}
\address{Department of Mathematics\\
University of Central Florida\\ Orlando, FL 32816}
\email{deguang.han@ucf.edu}
\author{David Larson}\address{Department of Mathematics\\
Texas A\&M University\\ College Station, TX}
\email{larson@math.tamu.edu}
\author{Franz Luef}\address{Department of Mathematics\\
Norwegian University of Science and Technology\\ Trondheim, Noway}
\email{Franz.luef@ntnu.no}
\thanks{$(*)$  Dorin Dutkay is  partially supported by a grant from the Simons Foundation (No. 228539). Deguang Han is partially supported by NSF DMS-1403400 and DMS-1712602}
\date{\today}
\keywords{Projective group representations, frame vectors, Bessel vectors, multi-frame vectors, super-frame vectors
duality principle, time-frequency analysis, Gabor frames}
\subjclass[2010]{Primary 42C15, 46C05, 47B10.}

\begin{abstract}  The duality principle for group representations developed in \cite{DHL-JFA, HL_BLM} exhibits a fact that the well-known duality principle in Gabor analysis is not an isolated incident but a  more general phenomenon residing in the context of group representation theory.  There are two other well-known fundamental properties in Gabor analysis: The Wexler-Raz biorthogonality and the Fundamental Identity of Gabor analysis. In this paper we will show that these fundamental properties remain to be true for general projective unitary group representations. The main purpose of this paper  is present a more general duality theorem which shows that that muti-frame  generators meet super-frame generators through a dual commutant pairs. In particular, for the  Gabor representations $\pi_{\Lambda}$ and $\pi_{\Lambda^{o}}$ with respect to a pair of  dual time-frequency lattices $\Lambda$ and $\Lambda^{o}$ in $\R^{d}\times \R^{d}$ we have  that $\{\pi_{\Lambda}(m, n)g_{1} \oplus ... \oplus \pi_{\Lambda}(m, n)g_{k}\}_{m, n \in \Z^{d}}$ is a frame for $L^{2}(\R^{d})\oplus ... \oplus L^{2}(\R^{d})$  if and only if $\cup_{i=1}^{k}\{\pi_{\Lambda^{o}}(m, n)g_{i}\}_{m, n\in\Z^{d}}$ is a Riesz sequence, and  $\cup_{i=1}^{k}\{\pi_{\Lambda}(m, n)g_{i}\}_{m, n\in\Z^{d}}$ is a frame for $L^{2}(\R^{d})$ if and only if  $\{\pi_{\Lambda^{o}}(m, n)g_{1} \oplus ... \oplus \pi_{\Lambda^{o}}(m, n)g_{k}\}_{m, n \in \Z^{d}}$ is a Riesz sequence. This appears to be new even in the context of Gabor analysis.
\end{abstract}
\maketitle

\section{Introduction }
\setcounter{figure}{0} \setcounter{equation}{0}

In this paper we continue the investigation on the duality phenomenon for projective unitary group representations. The purpose of this paper is two-fold: First we  prove that the Wexler-Raz biorthogonality and the Fundamental Identity in Gabor analysis also reflect a general phenomenon  for more  general projective unitary representations of any countable groups. Secondly we  establish a new duality principle connecting the muti-frame generators and super-frame generators, which is  new even in the context of Gabor analysis.

Recall that a {\it frame} for a Hilbert space $H$ is a sequence
 $\{x_{n}\}_{n\in I}$ in $H$ with the property that there
exist positive constants $A, B > 0$ such that
\begin{equation} \label{eg1.1}
A\|x\|^{2}\leq \sum_{n\in I}|\ip{x}{x_{n}}|^{2} \leq B \|x\|^{2}
\end{equation}
holds for every $x\in H$. {\it A tight frame} refers to the case
when $A = B$, and a {\it Parseval frame}  refers to the case when
$A = B = 1$. In the case that (\ref{eg1.1}) hold only for all
$x\in \overline{span}\{x_{n}\}$, then we say that $\{x_{n}\}$ is a
{\it frame sequence}, i.e., it is a frame for its closed linear
span. If we only require the right-hand side of the inequality
(\ref{eg1.1}), then $\{x_{n}\}$ is called a {\it Bessel sequence}. Similarly,  a Riesz sequence is a sequence that is a Riesz basis for its closed linear span. 

Given a sequence $\{x_{n}\}_{n\in I}$ in a Hilbert space $H$. The {\it analysis operator} $\Theta: H \rightarrow \ell^{2}(I)$ is defined by
$$
\Theta(x) = \sum_{n\in I}\ip{x}{x_{n}} e_{n}, \ \ \  x\in H,
$$
where $\{e_{n}\}_{n\in I}$ is the standard orthonormal basis for $\ell^{2}(I)$ and the domain of $\Theta$ is the set of all $x\in H$ such that $\{\ip{x}{x_{n}}\}_{n\in I} \in \ell^{2}(I)$. Clearly the domain of $\Theta$ is $H$ if $\{x_{n}\}_{n\in I}$ is a frame sequence and the range of $\Theta$ is $\ell^{2}(I)$ if $\{x_{n}\}_{n\in I}$ is a Riesz sequence.

One of the well studied classes of frames is the time-frequency lattice representation frames.    Let $\Lambda= A(\Z^{d}\times \Z^{d})$ be a full-rank time-frequency lattices, where $A$ is a $2d\times 2d$ invertible real matrix.  The dual lattice of $\Lambda$ is the time-frequency lattice defined by 
 $$
 \Lambda^{o} = \{\lambda^{o}\in \R^{d}\times \R^{d}: \langle \lambda, \lambda^{o}\rangle \in \Z, \forall \lambda\in\Lambda\}.
 $$
 
 A {\it Gabor (or Weyl-Heisenberg ) family} is
a collection of functions in $L^2(\R^d)$
$$
   {\bf G}(g, \Lambda) = \{e^{2\pi i<\ell, x>} g(x-\kappa): \lambda = (\ell, \kappa)\in\Lambda\}, 
$$
where $g\in L^{2}(\R^{d})$ is the generator of the Gabor family.  A Gabor frame (with a single generator) is a frame of the form  ${\bf G}(g, \Lambda)$. 
Let $E_{\ell}$ and $T_{\kappa}$ be the modulation and
translation unitary operators defined by $ E_{\ell}f(x) = e^{2\pi
i<\ell, x>}f(x) $ and
$
T_{\kappa}f(x) = f( x - \kappa)
$
for all $f\in L^{2}(\R ^{d})$. Then we have  $ {\bf G}(g, \Lambda) =
\{E_{\ell}T_{\kappa}g: \lambda =(\ell, \kappa)\in \Lambda\}$. Hence a Gabor frame is a frame induced by the Gabor representation $\pi_{\Lambda}$ of  the abelian group $\Z^{d}\times
\Z^{d}$ defined by $\pi_{\Lambda}(m, n) \rightarrow
E_{\ell}T_{\kappa}$, where $(\ell, \kappa) = A(m, n)$.

In Gabor analysis, there are several fundamental theorems: Probably the most well-known  one is the {\bf Density Theorem} which tells us that a Gabor frame exists if and only if the $vol(\Lambda) \leq 1$, i.e.,  the density of $\Lambda$ is greater than or equal to one (c.f. \cite{BB, HW1, He07, Rie}), where the density of $\Lambda$ is ${1\over vol(\Lambda)}$ and  $vol(\Lambda)$ is the Lebesgue measure of a fundamental domain of $\Lambda$, which is equal to $|det(A)|$ if $\Lambda = A(\Z^{d}\times \Z^{d})$.

The other well-known properties include the  (Ron-Shen)  duality principle, the Wexler-Raz biorthogonality and the Fundamental Identity of Gabor frames. The duality principle for Gabor frames was independently and
essentially simultaneous discovered by Daubechies, H. Landau, and
Z. Landau \cite{DLL}, Janssen \cite{Jan}, and Ron and Shen
\cite{RS}, and the techniques used in these three articles to
prove the duality principle are quite different. We refer to
\cite{He07} for more details about this principle and its
important applications.

\begin{theo} Let $\Lambda = A\Z^{2d}$ be a lattice with $|det(A)|\leq 1$, and $\Lambda^{0}$ be its dual lattice. Then we have 

(i)   [The duality principle] A Gabor family $\{\pi_{\Lambda}(m, n)g\}$ is a frame (resp. Parserval frame) for $L^{2}(\R^{d})$ if and only if $\{\pi_{\Lambda^{(0)}}(m, n)g\}$ is a Riesz sequence (resp. orthogonal sequence).

(ii)  [The Wexler-Raz biorthogonality]  If  $\{\pi_{\Lambda}(m, n)g\}$ is a frame  for $L^{2}(\R^{d})$, then  $$\langle \pi_{\Lambda^{0}}(m, n)g, S^{-1}g\rangle =  |det A| \delta_{(m, n), (0, 0)},$$  where $S$ is the frame operator for  $\{\pi_{\Lambda}(m, n)g\}$

(iii) [The Fundamental Identity of Gabor Analysis -Janssen representation]  If $f, g, h, k$ are Bessel vectors for $\pi_{\Lambda}$, then 
$$
 \sum_{m, n} \langle f, \pi_{\Lambda}(m, n)g\rangle \langle \pi_{\Lambda}(m, n)h, k\rangle = 
 vol(\Lambda)^{-1}\sum_{m, n} \langle f, \pi_{\Lambda^{\circ}}(m, n)k\rangle \langle  \pi_{\Lambda^{\circ}}(m, n)h, g\rangle
$$

i.e. $$\langle \Theta_{\pi_{\Lambda}, g}(f), \Theta_{\pi_{\Lambda}, h}(k)\rangle  = vol(\Lambda)^{-1} \langle \Theta_{\pi_{\Lambda^{\circ}}, k}(f), \Theta_{\pi_{\Lambda^{\circ}}, h}(g)\rangle ,  $$
where $\Theta_{\pi_{\Lambda}, g}$ (similarly for $\Theta_{\pi_{\Lambda^{\circ}}, h}$ etc.) is the analysis operator for $\{\pi_{\Lambda}(m, n)g: m, n\in\Z^{d}\}$.
\end{theo}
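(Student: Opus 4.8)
The plan is to obtain all three parts of the theorem from a single structural input, the \emph{commutation theorem} for the Gabor representations: the von Neumann algebra $\sM_{\Lambda}$ generated by $\{\pi_{\Lambda}(m,n):m,n\in\Z^{d}\}$ is the commutant of the von Neumann algebra $\sM_{\Lambda^{o}}$ generated by $\{\pi_{\Lambda^{o}}(m,n):m,n\in\Z^{d}\}$, and vice versa. This is Rieffel's commutation relation for the twisted Heisenberg (Gabor) system; I would record it as a lemma and, to keep the paper self-contained, re-derive it from the Poisson summation formula, since in the lattice case the explicit form of the commutation relation is exactly the Janssen-type series occurring in part (iii). Once this is available, each of (i)--(iii) becomes an instance of a general principle about a (projective) representation $\pi_{\Lambda}$ together with a ``dual'' representation $\pi_{\Lambda^{o}}$ realized inside its commutant.

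I would prove (iii) first, as it carries the computation. Given Bessel vectors $f,g,h,k$ for $\pi_{\Lambda}$, form the mixed operator $T_{g,h}=\Theta_{\pi_{\Lambda},h}^{*}\Theta_{\pi_{\Lambda},g}$, i.e.\ $T_{g,h}f=\sum_{m,n}\inner{f,\pi_{\Lambda}(m,n)g}\,\pi_{\Lambda}(m,n)h$, which is bounded because $g,h$ (and $f,k$) are Bessel. A direct computation with the cocycle shows $T_{g,h}\in\pi_{\Lambda}(\Z^{2d})'=\sM_{\Lambda^{o}}$, and expanding it against the generators of $\sM_{\Lambda^{o}}$ (this is precisely where the commutation theorem / Poisson summation enters) gives $T_{g,h}=vol(\Lambda)^{-1}\sum_{m,n}\inner{\pi_{\Lambda^{o}}(m,n)h,g}\,\pi_{\Lambda^{o}}(m,n)$ in the weak sense. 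Pairing $\inner{T_{g,h}f,k}$ and rearranging yields $\inner{\Theta_{\pi_{\Lambda},g}f,\Theta_{\pi_{\Lambda},h}k}=vol(\Lambda)^{-1}\inner{\Theta_{\pi_{\Lambda^{o}},k}f,\Theta_{\pi_{\Lambda^{o}},h}g}$; the only real care is in justifying the interchange and convergence of the two series, which the Bessel hypotheses on all four vectors supply. Part (ii) is then a specialization: if $\{\pi_{\Lambda}(m,n)g\}$ is a frame with frame operator $S$, then $\gamma=S^{-1}g$ is a Bessel vector and $\{\pi_{\Lambda}(m,n)\gamma\}$ is the canonical dual, so $\Theta_{\pi_{\Lambda},\gamma}^{*}\Theta_{\pi_{\Lambda},g}=I$, i.e.\ $\inner{\Theta_{\pi_{\Lambda},g}f,\Theta_{\pi_{\Lambda},\gamma}k}=\inner{f,k}$. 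Feeding $h=\gamma$ into (iii) gives $\inner{f,k}=vol(\Lambda)^{-1}\sum_{m,n}\inner{f,\pi_{\Lambda^{o}}(m,n)k}\overline{\inner{g,\pi_{\Lambda^{o}}(m,n)\gamma}}$ for all such $f,k$; choosing $k=g$ and using that the adjoint system $\{\pi_{\Lambda^{o}}(m,n)g\}$ is $\ell^{2}$-linearly independent (a Riesz sequence, by part (i)) forces $\inner{\pi_{\Lambda^{o}}(m,n)\gamma,g}=vol(\Lambda)\,\delta_{(m,n),(0,0)}$, which, with $vol(\Lambda)=|\det A|$ and the cocycle identity rewriting $\pi_{\Lambda^{o}}(m,n)^{*}$, is the stated Wexler--Raz relation.

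For (i) I would invoke the duality principle for commutant pairs of projective unitary representations established in \cite{DHL-JFA, HL_BLM}: if $\sigma$ is a representation of the group generating the commutant of $\pi(\Z^{2d})''$ on the same Hilbert space, then a vector $g$ is a frame (resp.\ Parseval frame) generator for $\pi$ if and only if it is a Riesz-sequence (resp.\ orthonormal-sequence) generator for $\sigma$, with reciprocal bounds. Applying this to $\pi=\pi_{\Lambda}$, $\sigma=\pi_{\Lambda^{o}}$ via the commutation theorem yields (i) at once; the density hypothesis $|\det A|\le 1$ is exactly what guarantees $\pi_{\Lambda}$ admits a complete frame vector, placing us in the regime where the abstract principle applies and the Parseval/orthonormal refinement follows by tracking the bounds. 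The main obstacle in the whole argument is the commutation theorem itself and, inside the abstract duality principle, the translation between ``$g$ is a frame vector for $\pi$'' and the projection-lattice/trace structure of $\pi(\Z^{2d})''$; everything else is bookkeeping with absolutely convergent Bessel series, so I will isolate the commutation statement as a separate lemma proved via Poisson summation.
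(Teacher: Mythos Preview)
The paper does not actually prove this theorem: it is stated in the introduction as classical background, with citations to \cite{DLL,Jan,RS,He07}. The relevant comparison is therefore with the paper's proof of the \emph{generalization}, Theorem~\ref{main-thm1}, of which the Gabor statement is the special case $(\pi,\sigma)=(\pi_{\Lambda},\pi_{\Lambda^{o}})$.

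Your approach is correct, but for parts (ii)--(iii) it takes a genuinely different route. You treat the Fundamental Identity (iii) as primary and prove it by an analytic, Gabor-specific argument: form $T_{g,h}=\Theta_{h}^{*}\Theta_{g}\in\sM_{\Lambda^{o}}$ and expand it against the generators $\pi_{\Lambda^{o}}(m,n)$ via Poisson summation, then specialize to obtain Wexler--Raz (ii), using the Riesz-sequence property from (i) to read off the unique coefficients. The paper, by contrast, derives both (ii) and (iii) in the abstract setting using \emph{only} the dual-commutant-pair axioms and the duality principle (i). For Wexler--Raz it computes $\langle\sigma(g)\xi,S^{-1}\xi\rangle=Tr_{\pi(G)'}(\sigma(g))$ via the trace realized by the Parseval vector $\eta=S^{-1/2}\xi$, and then invokes (i) to say $\{\sigma(g)\eta\}$ is orthogonal. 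For the Fundamental Identity it fixes a Parseval vector $\varphi$, uses (i) to make $\{\sigma(g)\varphi\}$ an orthonormal basis of $[\pi(G)'\varphi]$, expands $\Theta_{\xi}^{*}\Theta_{\eta}\varphi$ in that basis, and transports to arbitrary $y$ by writing $y=A\varphi$ with $A\in\pi(G)''$. No Poisson summation or lattice structure enters; the Janssen-type expansion is shown to be a \emph{consequence} of duality rather than an independent analytic input.

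What each buys: your argument is the classical one and is perhaps more transparent in the Gabor case, but it does not generalize beyond lattices, and it imports Poisson summation as an external tool. The paper's argument shows that once $(\pi_{\Lambda},\pi_{\Lambda^{o}})$ is known to be a dual commutant pair, all three statements follow formally; in particular, the constant $vol(\Lambda)^{-1}$ arises intrinsically as $Tr_{\pi(G)'}(I)^{-1}$. For part (i) you and the paper agree: both simply invoke the abstract duality principle of \cite{DHL-JFA,HL_BLM} applied to the commutant pair, with the density hypothesis $|\det A|\le 1$ placing $\pi_{\Lambda}$ on the frame side.
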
 

Considering the fact all basic properties above establish the intrinsic connections between the Gabor representations $\pi_{\Lambda}$ and $\pi_{\Lambda^{0}}$,  which can be viewed as projective unitary representations of the abelian group $\Z^{d}\times \Z^{d}$, it is not surprising to expect that this might be a phenomenon for general projective unitary representations on any countable groups. We investigated the duality principle for general groups in \cite{HL_BLM} and \cite{DHL-JFA}, and established the following theorem:

\begin{theo}\label{thm-main1}   Let $\pi$ be a frame representation and  $(\pi, \sigma)$ be
a dual commutant  pair (see Definition  \ref{dual-pair}) of projective unitary representations of $G$ on a Hilbert space $H$. Then $\{\pi(g)\xi\}_{g\in G}$ is a frame (respectively,
a tight frame) for $H$ if and only if $\{\sigma(g)\xi\}_{g\in
G}$ is a Riesz sequence (respectively, an orthogonal sequence).
\end{theo}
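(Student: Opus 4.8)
The plan is to move the whole configuration inside the left regular representation, where both assertions turn into the invertibility of one and the same element of a (twisted) group von Neumann algebra. Since $\pi$ is a frame representation, pick a Parseval frame vector $\eta$ for $\pi$: its analysis operator $V=\Theta_{\pi,\eta}\colon H\to\ell^2(G)$ is an isometry with $V\pi(g)=\lambda(g)V$, where $\lambda$ denotes the left regular representation of $G$ with the same multiplier as $\pi$, and $P:=VV^{*}$ is a projection in $\lambda(G)'$. Conjugating by the unitary $V\colon H\to P\ell^2(G)$ changes neither the frame property of $\{\pi(g)\xi\}$ nor the Riesz-sequence property of $\{\sigma(g)\xi\}$, so we may assume $H=P\ell^2(G)$ and $\pi=\lambda|_{H}$. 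By the commutation theorem for twisted regular representations $\lambda(G)'=\rho(G)''$ (with $\rho$ the right regular representation), and since $P\in\lambda(G)'$ the hypothesis $\pi(G)'=\sigma(G)''$ becomes $\sigma(G)''=P\rho(G)''P$ on $P\ell^2(G)$; in this picture the ambient algebra $\rho(G)''$ is finite, with trace $\tau(x)=\langle x\delta_e,\delta_e\rangle$, and $\tau(P)$ will play the role of $vol(\Lambda)$.

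Next I would reduce both sides to a single invertibility. First, Bessel-ness: if $\{\pi(g)\xi\}$ is a frame or $\{\sigma(g)\xi\}$ is a Riesz sequence then $\xi$ is a Bessel vector for the relevant representation, and a Bessel vector for $\pi$ is automatically one for $\sigma$ and conversely (this is part of the Wexler--Raz / Fundamental-Identity circle of results in this generality; in the present model it merely says a right convolver is a left convolver), so all analysis, synthesis, frame and Gram operators below are bounded. Put $\beta(g)=\langle\sigma(g)\xi,\xi\rangle$. Equivariance of the $\sigma$-orbit shows that the Gram operator of $\{\sigma(g)\xi\}$ on $\ell^2(G)$ is the (twisted) left-convolution operator determined by $\beta$, so $\{\sigma(g)\xi\}$ is a Riesz sequence iff the corresponding element of the twisted group von Neumann algebra is invertible. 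On the other hand, the operator form of the Fundamental Identity --- the Janssen-type expansion $S_{\pi,\xi}=\tau(P)^{-1}\sum_{g}\overline{\beta(g)}\,\sigma(g)$ of the frame operator of $\{\pi(g)\xi\}$ --- presents $S_{\pi,\xi}$, up to the scalar $\tau(P)^{-1}$, as the image of that same algebra datum (up to the natural $*$-involution) under the representation $\sigma$ extended to the twisted group algebra, so $\{\pi(g)\xi\}$ is a frame iff this image is invertible. In the dual-pair model both maps are faithful normal representations of the same finite von Neumann algebra, so an element is invertible under one iff under the other, and the two assertions coincide. The tight/orthogonal refinement is the scalar case: $S_{\pi,\xi}=cI$ forces the algebra datum, hence the Gram operator, to be a scalar, i.e.\ $\{\sigma(g)\xi\}$ orthogonal, and conversely.

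The substantive points --- and the places most likely to need care --- are two. One is establishing the Janssen / Fundamental-Identity expansion of the frame operator for a \emph{general} projective representation, which in particular requires matching the multipliers of $\pi$ and $\sigma$ so that the operators built from $\sigma$ genuinely lie in (the commutant copy of) the same twisted group von Neumann algebra as $\beta$. The other, and the real crux, is the faithfulness invoked when transferring invertibility: it is here that the definition of a dual commutant pair is used rather than just the equality of commutants, for, as elementary finite-group examples show, the relation $\pi(G)'=\sigma(G)''$ alone yields only the implication ``Riesz sequence $\Rightarrow$ frame'', the converse additionally requiring that no central summand of the algebra be collapsed under $\sigma$. (That the group von Neumann algebra is finite is a convenience that removes any gap between ``bounded below'' and ``invertible'' for the positive convolution operators involved, so the frame and Riesz bounds pass directly to invertibility.)
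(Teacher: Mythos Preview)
First, a framing remark: this theorem is not proved in the present paper---it is quoted from the authors' earlier work \cite{HL_BLM, DHL-JFA} and then used as input to Theorems~\ref{main-thm1} and~\ref{main-thm2}. So there is no in-paper argument to compare against directly; what matters is the logical order the paper adopts.

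That order is exactly where your sketch has a gap. Your argument rests on the Janssen-type expansion $S_{\pi,\xi}=\tau(P)^{-1}\sum_{g}\overline{\beta(g)}\,\sigma(g)$, which is Theorem~\ref{main-thm1}(ii) here, and the paper's proof of that identity \emph{uses} Theorem~\ref{thm-main1}: it needs that a Parseval frame vector $\varphi$ for $\pi$ has an orthogonal $\sigma$-orbit, so that $\{\sigma(g)\varphi\}$ furnishes an orthonormal basis of $[\pi(G)'\varphi]$ in which to expand $\Theta_{\xi,\pi}^{*}\Theta_{\eta,\pi}\varphi$. You flag establishing the Fundamental Identity as a ``substantive point'', but you give no independent route to it, and in this abstract setting there is no Poisson-summation shortcut as in classical Gabor analysis; the step ``$\varphi$ Parseval for $\pi\Rightarrow\{\sigma(g)\varphi\}$ orthogonal'' is precisely the tight-frame half of the theorem you are trying to prove. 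As written the argument is circular. A second loose end: after passing to $P\ell^{2}(G)$ you treat the Gram operator of $\{\sigma(g)\xi\}$ (an operator on $\ell^{2}(G)$, lying in the left regular von Neumann algebra for $\sigma$'s multiplier $\mu_{\sigma}$) and $S_{\pi,\xi}\in\sigma(G)''\subset B(H)$ as images of ``the same element'' under two faithful representations. But the dual-pair hypothesis gives only the equality $\sigma(G)''=\pi(G)'$ of von Neumann algebras, not that $\sigma$ is a compression of $\rho$, so you still owe the construction of a faithful normal $*$-homomorphism from the twisted group von Neumann algebra $L(G,\mu_{\sigma})$ onto $\sigma(G)''$ and a verification that it carries one operator to (a scalar multiple of) the other. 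You correctly locate condition~(ii) of Definition~\ref{dual-pair} (existence of a Riesz vector for $\sigma$) as the source of faithfulness, but the sketch stops short of carrying this out, and without it the transfer of invertibility in the ``frame $\Rightarrow$ Riesz sequence'' direction does not follow.
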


Oner of the central problems concerning the duality principle is the existence problem of dual commutant pairs $(\pi, \sigma)$ for a group $G$ and/or  for a given  representations $\pi$. This turns out to be a very difficulty problem due to the following result \cite{DHL-JFA}:

\begin{theo}\label{thm-ICC} {\it Let $\pi = \lambda|_{P}$ be a subrepresentation of the left
regular representation $\lambda$  of an ICC (infinite conjugacy class) group
$G$ and $P$ is an orthogonal projection in the commutant  $\lambda(G)'$ of $\lambda(G)$. Then the following are
equivalent:

(i)  $\lambda(G)'$ and $P\lambda(G)'P$ are isomorphic von Neumann
algebras;

(ii) there exists a group representation $\sigma$ such that $(\pi,
\sigma)$ form a dual pair. }
\end{theo}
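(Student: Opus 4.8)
\medskip
\noindent\textbf{Proof proposal.}
Put $H=\ell^2(G)$ and let $\rho$ be the right regular representation of $G$, so that $\lambda(G)'=\rho(G)''$; recall that $\lambda\cong\rho$ (via the unitary $\delta_g\mapsto\delta_{g^{-1}}$), and that, $G$ being ICC, both $\lambda(G)''$ and $\lambda(G)'$ are $\mathrm{II}_1$ factors with faithful trace $\tau$, so in particular $\lambda(G)''\cong\lambda(G)'$. We may assume $P\neq0$. Since $P\in\lambda(G)'$, the representation $\pi=\lambda|_P$ acts on $PH$, and the commutation theorem identifies $\pi(G)'$ with the corner $\mathrm{II}_1$ factor $P\lambda(G)'P$ of~(i) and $\pi(G)''$ with a $\mathrm{II}_1$ factor isomorphic to $\lambda(G)''$. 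Unwinding Definition~\ref{dual-pair}, statement~(ii) asserts the existence of a unitary representation $\sigma$ of $G$ on $PH$ with $\sigma(G)''=\pi(G)'=P\lambda(G)'P$ (equivalently $\sigma(G)'=\pi(G)''$, the remaining containments in the definition being then automatic); thus~(ii) says exactly that the factor $P\lambda(G)'P$ is generated by a unitary representation of $G$ acting on $PH$, and the plan is to reconcile this with~(i).

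\smallskip
\noindent\emph{(i)$\Rightarrow$(ii).} Choose a $*$-isomorphism $\phi\colon\lambda(G)'\to P\lambda(G)'P$ (automatically normal and unital) and set $\sigma(g):=\phi(\rho(g))$. Then $\sigma$ is a unitary representation of $G$ on $PH$: it is multiplicative because $\rho$ and $\phi$ are, each $\sigma(g)$ is unitary because $\phi$ is a $*$-map, and $\sigma(e)=\phi(I)=P=\mathrm{id}_{PH}$. Moreover $\sigma(G)''=\phi(\rho(G)'')=\phi(\lambda(G)')=P\lambda(G)'P=\pi(G)'$, whence $\sigma(G)'=\pi(G)''$ on taking commutants in $B(PH)$; the inclusions $\sigma(G)\subseteq\pi(G)'$ and $\pi(G)\subseteq\sigma(G)'$ are then evident, so $(\pi,\sigma)$ is a dual pair.

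\smallskip
\noindent\emph{(ii)$\Rightarrow$(i).} This is the substantive direction. The vector $\xi_0:=P\delta_e\in PH$ satisfies $\pi(g)\xi_0=P\delta_g$ and $\sum_{g\in G}|\ip{x}{\pi(g)\xi_0}|^2=\sum_{g\in G}|\ip{Px}{\delta_g}|^2=\|Px\|^2=\|x\|^2$ for all $x\in PH$, so $\{\pi(g)\xi_0\}_{g\in G}$ is a complete Parseval (in particular tight) frame for $PH$; thus $\pi$ is a frame representation, and since $(\pi,\sigma)$ is a dual pair, Theorem~\ref{thm-main1} applied to $\xi_0$ gives that $\{\sigma(g)\xi_0\}_{g\in G}$ is an orthogonal sequence. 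Its vectors are nonzero of constant norm $\tau(P)^{1/2}$ (as $\sigma$ is unitary and $\tau(P)>0$), so $\{\tau(P)^{-1/2}\sigma(g)\xi_0\}_{g\in G}$ is an orthonormal basis of the $\sigma(G)$-invariant subspace $K_0:=\overline{span}\{\sigma(g)\xi_0\}_{g\in G}$, and $\sigma(g)\xi_0\mapsto\delta_g$ extends to a unitary $W\colon K_0\to\ell^2(G)$ intertwining $\sigma|_{K_0}$ with $\lambda$; hence $(\sigma|_{K_0})(G)''\cong\lambda(G)''$. Now let $q\in B(PH)$ be the projection onto $K_0$; since $K_0$ is $\sigma(G)$-invariant, $q\in\sigma(G)'$, so $K_0$ reduces the $\mathrm{II}_1$ factor $\sigma(G)''=P\lambda(G)'P$ and restriction to $K_0$ is a normal $*$-homomorphism of $\sigma(G)''$ onto the von Neumann algebra $(\sigma|_{K_0})(G)''$; its kernel is a weakly closed two-sided ideal of the factor $\sigma(G)''$, proper because $q\neq0$, hence trivial, so the map is a $*$-isomorphism. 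Therefore $P\lambda(G)'P\cong(\sigma|_{K_0})(G)''\cong\lambda(G)''\cong\lambda(G)'$, which is~(i).

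\smallskip
\noindent The step I expect to be the crux is the last one: upgrading ``$\sigma$ admits an orthogonal-sequence generating vector $\xi_0$'' to an isomorphism of the ambient von Neumann algebras. This works because $P\lambda(G)'P$ has no nontrivial weakly closed two-sided ideals, i.e.\ because it is a \emph{factor} --- which is precisely where the ICC hypothesis is used; for non-ICC $G$ the algebra $\lambda(G)'$ is not a factor, the restriction map above can have nontrivial kernel, and one only recovers $P\lambda(G)'P$ as a quotient related to $\lambda(G)''$ rather than to $\lambda(G)'$ itself. The remaining ingredients are soft: transporting $\rho$ through $\phi$ in (i)$\Rightarrow$(ii), and the bookkeeping with the commutation theorem, where one must keep track of the fact that $P$ lies in $\lambda(G)'$ and not in $\lambda(G)''$. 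Note finally that the orthogonality of $\{\sigma(g)\xi_0\}$ is nothing but the abstract Wexler--Raz biorthogonality supplied by the general duality principle (Theorem~\ref{thm-main1}); establishing it directly would amount to reproving that identity.
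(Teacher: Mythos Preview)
This theorem is quoted from \cite{DHL-JFA} without proof in the present paper (see the sentence immediately preceding its statement), so there is no in-paper argument to compare against; what follows assesses your proposal on its own terms.

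Your (ii)$\Rightarrow$(i) is correct and is the natural route: $P\chi_e$ is a Parseval frame vector for $\pi$, Theorem~\ref{thm-main1} then makes $\{\sigma(g)P\chi_e\}_{g\in G}$ an orthogonal sequence, so $\sigma|_{K_0}\cong\lambda$; since $\sigma(G)''=P\lambda(G)'P$ is a factor (this is exactly where the ICC hypothesis enters, as you note), the restriction homomorphism to $K_0$ is faithful, yielding $P\lambda(G)'P\cong\lambda(G)''\cong\lambda(G)'$. The bookkeeping with the commutation theorem and the identification $\lambda(G)''\cong\lambda(G)'$ via $\lambda\cong\rho$ is also fine.

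In (i)$\Rightarrow$(ii) the construction $\sigma(g):=\phi(\rho(g))$ is the right one and does give $\sigma(G)''=\pi(G)'$. However, your assertion that ``the remaining containments in the definition [are] then automatic'' does not match Definition~\ref{dual-pair} as stated in this paper: a \emph{dual} commutant pair requires in addition that $\mathcal B_\pi=\mathcal B_\sigma$ and that one representation admits a complete frame vector while the other admits a Riesz sequence vector. Neither of these follows formally from $\sigma(G)''=\pi(G)'$. The set of Bessel vectors depends on the specific unitaries $\sigma(g)$, not merely on the von Neumann algebra they generate, and a $*$-isomorphism $\phi$ of II$_1$ factors need not be spatial, so the Riesz vector $\chi_e$ for $\rho$ is not automatically transported to a Riesz sequence vector for $\sigma$ on $PH$. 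These conditions can be verified in the present setting --- for instance via a Murray--von~Neumann dimension argument: $\dim_{\sigma(G)''}(PH)=\tau(P)^{-1}\ge 1$, so $PH$ contains a copy of the standard module $L^2(\sigma(G)'')$, on which $\sigma$ acts as a copy of the regular representation, producing the required Riesz sequence vector; the Bessel-vector equality then follows from Lemma~\ref{param-lem} applied on both sides --- but that argument is absent from your write-up. As it stands, your (i)$\Rightarrow$(ii) establishes only that $(\pi,\sigma)$ is a \emph{commutant} pair in the sense of Definition~\ref{dual-pair}, not a dual one.
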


For the free groups $\mathcal{F}_{n}$ with $n$-generators ($n\geq 2$), it is a longstanding problem whether all  their group  von Neumann algebras are $*$-isomorphic.  It was well-known  \cite{Dykema, Rad-Inv} that either all the von Neumann algebras
$P\lambda(\mathcal{F}_{n})'P$ ( $0\neq
P\in\lambda(\mathcal{F}_{n})'$) are  $*$-isomorphic, or no two of
them are $*$-isomorphic. This implies that the classification problem is also equivalent to the question  whether there exists a proper projection $P\in \lambda(\mathcal{F}_{n})'$ such that $\lambda(\mathcal{F}_{n})'$ and $P\lambda(\mathcal{F}_{n})'P$ are isomorphic von Neumann algebras.The above Theorem \ref{thm-ICC} shows that the  existence problem of commutant dual pairs for free groups  is also equivalent to the longstanding classification problem for free group von Neumann algebras.  

There are plenty of examples where we know that the dual commutant pairs exist. For example,  if $G$ is either an abelian group or an amenable ICC (infinite conjugate class) group,
then for every projection $0\neq P\in \lambda(G)'$, there exists a
unitary representation $\sigma$ of $G$ such that $(\lambda|_{P},
\sigma)$ is a  commutant dual pair, where $\lambda|_{P}$ is the subrepresentation of the left regular representation $\lambda$ restricted to $range(P)$. On the other hand, there exists an ICC group (e.g., $G =\Z^{2}\rtimes SL(2, \Z)$),
such that none of the nontrivial subrepresentations $\lambda|_{P}$
admits a commutant dual pair (c.f. \cite{Conn, DHL-JFA, Dykema, Popa_PNSA, Popa_AnnMath, Rad-JAMS, Rad-Inv}). These examples demonstrate the complexity of the existence problem which remains open for almost all the cases.

In this paper we  first prove that the Wexler-Raz biorthogonality and the Fundamental Identity in Gabor analysis remain to be true  for more  general projective unitary representations of any countable group $G$. Secondly we shall establish the duality principle connecting the muti-frame generators and super-frame generators, which is  new even in the context of Gabor analysis.   In order to state our main results we need to recall  some necessary definitions, notations and terminologies related to frames and frame representations.


 Recall (cf. \cite{Va})
that a {\it projective unitary representation} $\pi$ for a
countable  group $G$ is a mapping $g\rightarrow \pi(g)$ from $G$
into the group $U(H)$ of all the unitary operators on a separable
Hilbert space $H$ such that $\pi(g)\pi(h) = \mu(g, h)\pi(gh)$ for
all $g, h\in G$, where $\mu(g, h)$ is a scalar-valued function on
$G\times G$ taking values in the circle group $\mathbb{T}$. This
function $\mu(g, h)$ is then called a {\it multiplier or
$2$-cocycle} of $\pi$. In this case we also say that $\pi$ is a
$\mu$-projective unitary representation.  It is clear from the
definition that we have
\begin{enumerate}[label=(\roman*)]

\item $\mu(g_{1}, g_{2}g_{3}) \mu(g_{2}, g_{3}) = \mu(g_{1}g_{2},
g_{3}) \mu(g_{1}, g_{2})$ for all $g_{1}, g_{2}, g_{3}\in G$,

\item $\mu(g, e) = \mu(e, g) = 1$ for all $g\in G$, where $e$
denotes the group unit of $G$.
\end{enumerate}

Any function $\mu: G\times G \rightarrow \mathbb{T}$ satisfying
$(i)$ -- $(ii)$ above will be called a {\it multiplier} for $G$.
It follows from $(i)$ and $(ii)$ that we also have

(iii) $\mu(g, g^{-1}) = \mu(g^{-1}, g)$ holds for all $g\in G$.

Similar to the group unitary representation case,   the left and
right regular projective representations with a prescribed
multiplier $\mu$ for $G$  can be defined by
$$
\lambda_{g}\chi_{h} = \mu(g, h)\chi_{gh}, \ \ \  h\in G,
$$
and
$$
\rho_{g}\chi_{h} = \mu(h, g^{-1})\chi_{hg^{-1}}, \ \ \  h\in G,
$$
where $\{\chi_{g}: g\in G\}$ is the standard orthonormal basis for
$\ell^{2}(G)$. Clearly, $\lambda_{g}$ and $r_{g}$ are unitary
operators on $\ell^{2}(G)$. Moreover,  $\lambda$ is a
$\mu$-projective unitary representation of $G$ with multiplier
$\mu(g, h)$ and $\rho$ is a projective unitary representation of
$G$ with multiplier $\overline{\mu(g, h)}$. The representations
$\lambda$ and $\rho$ are called the {\it left regular
$\mu$-projective representation} and the {\it right regular
$\mu$-projective representation}, respectively, of $G$. 

Given a projective unitary representation $\pi$ of a countable
group $G$ on a Hilbert space $H$, a vector $\xi \in H$ is called a
{\it complete frame vector (resp. complete tight frame vector,
complete Parseval frame vector)} for $\pi$ if $ \{\pi(g)\xi\}_{
g\in G}$ (here we view this as a sequence indexed by $G$) is a
frame (resp. tight frame, Parseval frame) for the whole Hilbert
space $H$, and is just called a {\it frame sequence vector (resp. tight
frame  sequence vector,  Parseval  sequence frame vector)} for $\pi$ if $
\{\pi(g)\xi\}_{ g\in G}$  is a {\it frame sequence (resp. tight
frame sequence, Parseval frame sequence)}.  Riesz sequence vector and Bessel vector  can be defined similarly.  We will use $\mathcal{B}_{\pi}$ to denote the set
of all the Bessel vectors of $\pi$.

For Gabor representations, both $\pi_{\Lambda}$ and $\pi_{\Lambda^{\circ}}$  are projective unitary representation of the
group $\Z^{d}\times \Z^{d}$. Moreover, it is well-known  that one of
two projective unitary representations $\pi_{\Lambda}$ and
$\pi_{\Lambda^{o}}$ for the group $G = \Z^{d}\times\Z^{d}$ must be
a frame representation and the other admits a Riesz vector. So we
can always assume that $\pi_{\Lambda}$ is a frame representation
of $\Z^{d}\times \Z^{d}$ and hence $\pi_{\Lambda^{o}}$ admits a
Riesz vector.  Moreover, we also have $\pi_{\Lambda}(G)' =
\pi_{\Lambda^{\circ}}(G)''$, and both representations share the same
Bessel vectors, where $\pi_{\Lambda}(G)' $ is the commutant of $\pi(G)$. This leads to the following definition

\begin{defi} \cite{DHL-JFA} \label{dual-pair} Let  $\pi$ and
$\sigma$  be two projective unitary representations of a countable group $G$ on the same Hilbert space $H$. We say that $(\pi, \sigma)$ is
 a {\it commutant pair} if $\pi(G)' = \sigma(G)''$, and a {\it  dual commutant pair} if they satisfy the following two additional conditions:
\begin{enumerate}[label=(\roman*)]
\item  $\mathcal{B}_{\pi} = \mathcal{B}_{\sigma}$.

\item  One of them admits a complete frame generator and the other one admits a Riesz sequence generator.
\end{enumerate}
\end{defi}

For any projective representation $\pi$ of a countable group $G$
on a Hilbert space $H$  and  $x\in H$,  the {\it analysis operator
} $\Theta_{x, \pi}$ (or $\Theta_{x}$ if $\pi$ is well-understood from the context) for $x$ from $\mathcal{D}(\Theta_{x}) (\subseteq
H)$ to $\ell^{2}(G)$ is defined by
$$
\Theta_{x}(y) = \sum_{g\in G}\inner{y, \pi(g)x}\chi_{g},
$$
where $\mathcal{D}(\Theta_{x}) = \{y\in H: \sum_{g\in G}|\inner{y,
\pi(g)x}|^{2} < \infty\}$ is the domain space of $\Theta_{x}$.
Clearly, $\mathcal{B}_{\pi} \subseteq \mathcal{D}(\Theta_{x})$
holds for every $x\in H$. In the case that $\mathcal{B}_{\pi}$ is
dense in $H$, we have that $\Theta_{x}$ is a densely defined and
closable linear operator from $\mathcal{B}_\pi$ to $\ell^2(G)$
(cf. \cite{GH}). Moreover, $x\in \mathcal{B}_{\pi}$ if and only if
$\Theta_{x}$ is a bounded linear operator on $H$, which in turn is
equivalent to the condition that $\mathcal{D}(\Theta_{x}) = H$. It is useful to note that 
$\Theta_{\eta}^{*}\Theta_{\xi}$ commutes with $\pi(G)$ if $\xi, \eta\in\mathcal{B}_{\pi}$. Thus, if
$\xi$ is a complete frame vector for $\pi$, then $\eta : =
S^{-1/2}\xi$ is a complete Parseval frame vector for $\pi$,
where $S= \Theta_{\xi}^{*}\Theta_{\xi}$ and is called the
{\it frame operator} for $\xi$ (or {\it Bessel operator} if $\xi$
is a Bessel vector).

It was proved in \cite{GH1} that for any complete Parseval frame vector $\eta$ for $\pi$, $Tr_{\pi(G)'}(A) = \langle A\eta, \eta\rangle $ defines a faithful normal trace on $\pi(G)'$. In the case of Gabor representation $\pi(\Lambda)$, it turns out that $Tr_{\pi(G)'}(I) = vol(\Lambda)$. Thus the following can be viewed as the  generalized Wexler-Raz biorthogonality and the fundamental frame duality identity for general frame representations.

\begin{theo}\label{main-thm1}Let $\pi$ be a frame representation and  $(\pi, \sigma)$ be
a  dual commutant  pair of projective unitary representations of $G$ on $H$. 

\begin{enumerate}[label=(\roman*)]

\item If  $\{\pi(g)\xi\}$ is a frame  for $H$, then  $$\langle \sigma(\xi), S^{-1}\xi\rangle =  Tr_{\pi(G)'}(I)\delta_{g, e},$$  where $S$ is the frame operator for  $\{\pi(g)\xi\}$,  $e$ is the group unit of $G$ and $Tr_{\pi(G)'}(I) = ||S^{-1/2}\xi||^{2}$.

\item If $\xi, \eta, x, y$ are Bessel vectors for $\pi$, then 
$$
\sum_{g\in G} \langle x, \pi(g)\xi\rangle  \langle \pi(g)\eta, y\rangle ={1\over Tr_{\pi(G)'}(I)} \sum_{g\in G} \sum_{g\in G} \langle x, \sigma(g)(y)\rangle  \langle \sigma(g)\eta, \xi\rangle.$$
i.e. $\langle \Theta_{\xi, \pi}(x), \Theta_{\eta, \pi}(y)\rangle ={1\over Tr_{\pi(G)'}(I)} \langle \Theta_{y, \sigma}(x), \Theta_{\eta, \sigma}(\xi)\rangle.$
\end{enumerate}

\end{theo}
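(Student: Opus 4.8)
The plan is to reduce both identities to a single structural fact: for Bessel vectors $\xi,\eta$ of a commutant pair $(\pi,\sigma)$, the operator $\Theta_{\eta,\pi}^{*}\Theta_{\xi,\pi}$ lies in $\pi(G)'=\sigma(G)''$, so it can be recovered from its action on a fixed complete Parseval frame vector $e_{0}=S^{-1/2}\xi_{0}$ for $\pi$ via the trace $Tr_{\pi(G)'}(A)=\langle Ae_{0},e_{0}\rangle$. The two statements are then just two ways of ``evaluating'' this trace. Throughout I will use freely that $\mathcal B_{\pi}=\mathcal B_{\sigma}$ and that both representations have dense Bessel vectors (a frame representation does), so all analysis operators in sight are bounded and the manipulations of series are legitimate.

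First I would establish the $\sigma$-side counterpart of the commutation fact: for $\xi,\eta\in\mathcal B_{\pi}=\mathcal B_{\sigma}$ one has $\Theta_{\eta,\sigma}^{*}\Theta_{\xi,\sigma}\in\sigma(G)'=\pi(G)''$. Next, the key computational lemma: there is a constant $c>0$ (which will turn out to be $Tr_{\pi(G)'}(I)$) such that
\begin{equation}\label{eq:core}
\Theta_{\eta,\pi}^{*}\Theta_{\xi,\pi}=\tfrac{1}{c}\,\Theta_{\xi,\sigma}^{*}\Theta_{\eta,\sigma}\qquad\text{for all }\xi,\eta\in\mathcal B_{\pi}.
\end{equation}
To see \eqref{eq:core}, note the left side is in $\pi(G)'$ and the right side is in $\pi(G)''$; if I can show they are equal it forces both into the center, but more to the point I would prove \eqref{eq:core} directly by testing against $\pi(g)$-orbits. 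The cleanest route: fix a complete Parseval frame vector $e_{0}$ for $\pi$; then for any $x,y\in\mathcal B_{\pi}$, expand $\langle \Theta_{\eta,\pi}^{*}\Theta_{\xi,\pi}x,y\rangle=\sum_{g}\langle x,\pi(g)\xi\rangle\langle\pi(g)\eta,y\rangle$ and rewrite $x=\sum_{h}\langle x,\pi(h)e_{0}\rangle\pi(h)e_{0}$, using that $\Theta_{e_{0},\pi}$ is an isometry intertwining $\pi$ with $\lambda$. The resulting double sum reorganizes, via the cocycle identities (i)--(iii), into a sum over the $\sigma$-orbits, producing $\tfrac1c\sum_{g}\langle x,\sigma(g)y\rangle\langle\sigma(g)\eta,\xi\rangle$; the constant $c$ is the normalization $\|e_{0}\|^{2}$-type quantity, i.e. $Tr_{\pi(G)'}(I)=\|S_{\xi}^{-1/2}\xi\|^{2}$ for any complete frame vector $\xi$, which is well-defined precisely because $Tr_{\pi(G)'}$ is a faithful normal trace (quoted from \cite{GH1}). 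This establishes part (ii) upon reading off the definition of the analysis operators, since part (ii) is exactly the bilinear-form version of \eqref{eq:core} paired against $x$ and $y$.

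For part (i), specialize. Suppose $\{\pi(g)\xi\}$ is a frame with frame operator $S$; then $S^{-1}\xi\in\mathcal B_{\pi}$ and $\Theta_{S^{-1}\xi,\pi}^{*}\Theta_{\xi,\pi}=I$ (the canonical dual reproduces the identity). Apply \eqref{eq:core} with $\eta=S^{-1}\xi$ to get $I=\tfrac1c\,\Theta_{\xi,\sigma}^{*}\Theta_{S^{-1}\xi,\sigma}$, hence $\Theta_{\xi,\sigma}^{*}\Theta_{S^{-1}\xi,\sigma}=cI$. Now evaluate the $(g,e)$ ``matrix entry'': $\langle\Theta_{\xi,\sigma}^{*}\Theta_{S^{-1}\xi,\sigma}\,\cdot,\cdot\rangle$ computed on the vector $\xi$ against $\sigma(g)\xi$ — or more cleanly, apply the operator $cI$ to $S^{-1}\xi$ and pair with $\sigma(g)\xi$, which gives $c\langle S^{-1}\xi,\sigma(g)\xi\rangle$ on one side and $\sum_{h}\langle S^{-1}\xi,\sigma(h)\xi\rangle\langle\sigma(h)\sigma(g)^{*}(\text{something}),\cdot\rangle$; the series collapses by the duality-principle theorem (Theorem \ref{thm-main1}), since $\{\sigma(g)\xi\}$ is a Riesz sequence, so its Gram-type coefficients against the canonical $\pi$-dual must be the Kronecker delta up to the constant $c=Tr_{\pi(G)'}(I)$. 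Rearranging yields $\langle\sigma(g)\xi,S^{-1}\xi\rangle=Tr_{\pi(G)'}(I)\,\delta_{g,e}$, which is (i).

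The main obstacle I anticipate is the bookkeeping in \eqref{eq:core}: correctly tracking the multiplier $\mu$ through the change from $\pi$-orbits to $\sigma$-orbits, since $\sigma$ carries the conjugate cocycle $\overline\mu$, and making sure the interchange of the (double) sums is justified — this is where density of $\mathcal B_{\pi}$ and the boundedness of all four analysis operators on Bessel vectors must be invoked carefully, ideally by first proving the identity for $x,y$ in a dense set and then extending by continuity. A secondary point requiring care is identifying the constant: one must check that $\|S_{\xi}^{-1/2}\xi\|^{2}$ is independent of the choice of complete frame vector $\xi$, which follows from the trace property $Tr_{\pi(G)'}(A)=\langle A\eta,\eta\rangle$ being independent of the Parseval frame vector $\eta$ (again \cite{GH1}), and that in the Gabor case this reduces to $vol(\Lambda)$, recovering Theorem 1.1(ii)--(iii) as the special case $G=\Z^{d}\times\Z^{d}$.
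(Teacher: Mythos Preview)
Your central lemma \eqref{eq:core}, namely $\Theta_{\eta,\pi}^{*}\Theta_{\xi,\pi}=c^{-1}\Theta_{\xi,\sigma}^{*}\Theta_{\eta,\sigma}$ as an \emph{operator} identity, is false in general and cannot be the backbone of the argument. You yourself observe that the left side lies in $\pi(G)'$ while the right side lies in $\sigma(G)'=\pi(G)''$; equality for all Bessel $\xi,\eta$ would force every $\Theta_{\eta,\pi}^{*}\Theta_{\xi,\pi}$ into the center, and by Lemma~\ref{lem-2.1} this would make $\pi(G)'$ abelian, which is certainly not assumed. Moreover, pairing \eqref{eq:core} against $x,y$ gives $c^{-1}\sum_{g}\langle x,\sigma(g)\eta\rangle\langle\sigma(g)\xi,y\rangle$, not the required $c^{-1}\sum_{g}\langle x,\sigma(g)y\rangle\langle\sigma(g)\eta,\xi\rangle$; note that in the Fundamental Identity the roles of $y$ and $\xi$ swap between the $\pi$-side and the $\sigma$-side, so part~(ii) is genuinely \emph{not} a bilinear form coming from a single operator in either commutant. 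Finally, the proposed mechanism for proving \eqref{eq:core}---``reorganize via the cocycle identities (i)--(iii) into a sum over $\sigma$-orbits''---cannot work as stated: in a dual commutant pair $\sigma$ is an abstract representation with $\sigma(G)''=\pi(G)'$, and there is no cocycle formula linking $\sigma(g)$ to $\pi(g)$ or to $\lambda(g)$; the multiplier identities alone do not produce $\sigma$.

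What the paper does instead avoids any operator identity between the two sides. For (i) it sets $\eta=S^{-1/2}\xi$, uses that $Tr_{\pi(G)'}(\sigma(g))=\langle\sigma(g)\eta,\eta\rangle$ and the trace property $Tr(S^{-1/2}\sigma(g)S^{1/2})=Tr(\sigma(g))$ to get $\langle\sigma(g)\xi,S^{-1}\xi\rangle=\langle\sigma(g)\eta,\eta\rangle$, and then invokes Theorem~\ref{thm-main1} to conclude the right side is $Tr_{\pi(G)'}(I)\delta_{g,e}$. For (ii) it fixes a Parseval frame vector $\varphi$, observes that $\Theta_{\xi,\pi}^{*}\Theta_{\eta,\pi}\varphi\in[\sigma(G)\varphi]$ and expands it in the orthonormal sequence $\{c^{-1/2}\sigma(g)\varphi\}$ (orthonormal by Theorem~\ref{thm-main1}); the coefficients $c_{g}=\langle\Theta_{\xi,\pi}^{*}\Theta_{\eta,\pi}\varphi,\sigma(g)\varphi\rangle$ are computed to equal $\langle\xi,\sigma(g)\eta\rangle$ using only that $\{\pi(h)\varphi\}$ is Parseval and that $\sigma(g)$ commutes with $\pi(h)$. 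The passage from $\varphi$ to a general Bessel $y$ uses the parametrization $y=A\varphi$ with $A\in\pi(G)''$ (Lemma~\ref{param-lem}), which commutes past $\Theta_{\xi,\pi}^{*}\Theta_{\eta,\pi}\in\pi(G)'$ and past each $\sigma(g)$. This is exactly where the swap of $y$ and $\xi$ enters, and it is the step your operator-identity ansatz cannot reproduce.
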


Our second main theorem deals with the duality principle for multi-frame and super-frame generators. 

\begin{defi} Let $\pi$ be projective unitary representation  of a countable
group $G$ on a Hilbert space $H$ and let $\xi_{1}, ... , \xi_{n} \in H$. We say that $\vec{\xi} = (\xi_{1}, ... , \xi_{n})$ is 
\begin{enumerate}[label=(\roman*)]

\item  a multi-frame vector for $\pi$ if $\{\pi(g)\xi_{i}: g\in G, i = 1, ..., n\}$ is a frame for $H$, and 

\item  a super-frame vector if each $\{\pi(g)\xi_{i}: g\in G\}$ is a frame for $H$ and $\Theta_{\xi_{i}}(H) \perp \Theta_{\xi_{j}}(H)$ for $i\neq j$. 
\end{enumerate}
\end{defi}

Parseval multi-frame vector  and Parseval super-frame vector can be defined similarly.  We remark that the concept of super-frames was first introduced  and systematically studied by Balan \cite{Ba1, Ba2},  Han and Larson \cite{HL} in the 1990's, and since then it has generated a host of research activities (c.f. \cite{DHP_JFA, DHPS-MathAnn, DBP, DG07,  GH, GH2, GH3, GH4, Han2_Parseval} and the references therein). 

\begin{theo}\label{main-thm2}  Let $\pi$ be a frame representation and  $(\pi, \sigma)$ be
a commutant  dual pair of projective unitary representations of $G$ on $H$. Let ${\vec \xi} = (\xi_{1}, ... , \xi_{n})$ . Then we have

(i)  $\vec{\xi}$ is a  super-frame vector for $\pi$ if and only if $\{\sigma(g)\xi_{j}: g\in G, j = 1, ..., n\}$ is Riesz sequence in $H$

(ii)  $\vec{\xi}$ is a multi-frame vector for $\pi$ if and only if $\{\sigma(g)\xi_{1}\oplus ... \oplus \sigma(g)\xi_{n}: g\in G\}$ is a Riesz sequence in $H\oplus ... \oplus H$.
\end{theo}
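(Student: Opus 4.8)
The plan is to reduce both statements to the already-established single-vector duality (Theorem \ref{thm-main1}) by passing to amplified/ampliated representations on direct sums of copies of $H$, and by exploiting the characterization of super-frame and multi-frame vectors in terms of orthogonality and complementarity of the ranges of the analysis operators. Throughout, recall that $(\pi,\sigma)$ being a dual commutant pair means $\pi(G)'=\sigma(G)''$, $\mathcal B_\pi=\mathcal B_\sigma$, and that a vector is Bessel for $\pi$ iff it is Bessel for $\sigma$; in particular $\Theta_{x,\pi}^*\Theta_{y,\pi}$ and $\Theta_{x,\sigma}^*\Theta_{y,\sigma}$ all lie in $\pi(G)'=\sigma(G)'$ up to the standard commutation relations, which is what couples the two sides.

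For part (i): The key reformulation is that $\vec\xi=(\xi_1,\dots,\xi_n)$ is a super-frame vector for $\pi$ precisely when the vector $\xi_1\oplus\cdots\oplus\xi_n$ is a complete frame vector for the \emph{inflated} representation $\pi^{(n)}:=\pi\oplus\cdots\oplus\pi$ acting on $H^{(n)}:=H\oplus\cdots\oplus H$ \emph{restricted to the diagonal} — more precisely, the mutual orthogonality $\Theta_{\xi_i,\pi}(H)\perp\Theta_{\xi_j,\pi}(H)$ for $i\neq j$ together with each $\{\pi(g)\xi_i\}$ being a frame is exactly the condition that makes $\sum_i\Theta_{\xi_i,\pi}^*\Theta_{\xi_i,\pi}$ invertible while the cross terms $\Theta_{\xi_i,\pi}^*\Theta_{\xi_j,\pi}=0$; dually, these cross terms are the off-diagonal entries of the Gram operator of $\{\sigma(g)\xi_j: g\in G, j=1,\dots,n\}$. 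So I would first record the lemma that for Bessel vectors, $\Theta_{\xi_i,\pi}(H)\perp\Theta_{\xi_j,\pi}(H)$ iff $\Theta_{\xi_i,\pi}^*\Theta_{\xi_j,\pi}=0$ iff (via the dual commutant pairing, using that this operator is in the common commutant) $\Theta_{\xi_i,\sigma}^*\Theta_{\xi_j,\sigma}=0$ iff $\{\sigma(g)\xi_i\}\cup\{\sigma(g)\xi_j\}$ has block-diagonal Gram operator. Combined with Theorem \ref{thm-main1} applied to each index $i$ separately (each $\{\pi(g)\xi_i\}$ is a frame iff $\{\sigma(g)\xi_i\}$ is a Riesz sequence), one gets: $\vec\xi$ is a super-frame vector iff each $\{\sigma(g)\xi_i\}$ is a Riesz sequence and the distinct families are mutually orthogonal, which is exactly the statement that the union $\{\sigma(g)\xi_j: g\in G, j=1,\dots,n\}$ is a Riesz sequence.

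For part (ii): Here $\vec\xi$ being a multi-frame vector for $\pi$ means $\xi_1\oplus\cdots\oplus\xi_n$ is a complete frame vector for the diagonal representation $g\mapsto\mathrm{diag}(\pi(g),\dots,\pi(g))=:\tilde\pi(g)$ on $H^{(n)}$ — because $\sum_{g,i}|\langle x,\pi(g)\xi_i\rangle|^2=\sum_g|\langle x\oplus 0\oplus\cdots, \tilde\pi(g)(\xi_1\oplus\cdots\oplus\xi_n)\rangle|^2$ evaluated appropriately; more cleanly, $\{\tilde\pi(g)(\xi_1\oplus\cdots\oplus\xi_n)\}$ is a frame for $H^{(n)}$ iff for all $y_1\oplus\cdots\oplus y_n$ the quantity $\sum_g|\sum_i\langle y_i,\pi(g)\xi_i\rangle|^2$ is comparable to $\sum_i\|y_i\|^2$, which after a polarization/invariance argument is equivalent to $\{\pi(g)\xi_i: g,i\}$ being a frame for $H$. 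Then the point is that $(\tilde\pi,\tilde\sigma)$ is again a dual commutant pair on $H^{(n)}$, where $\tilde\sigma(g)=\mathrm{diag}(\sigma(g),\dots,\sigma(g))$ — this requires checking $\tilde\pi(G)'=\tilde\sigma(G)''$ (standard: the commutant of a diagonal amplification is $M_n(\pi(G)')$, and one uses $\pi(G)'=\sigma(G)''$), that $\mathcal B_{\tilde\pi}=\mathcal B_{\tilde\sigma}$ (coordinatewise), and that $\tilde\pi$ is a frame representation while $\tilde\sigma$ admits a Riesz vector. Granting that, Theorem \ref{thm-main1} applied to $\tilde\pi$ gives: $\xi_1\oplus\cdots\oplus\xi_n$ is a complete frame vector for $\tilde\pi$ iff it is a Riesz sequence vector for $\tilde\sigma$, i.e. $\{\sigma(g)\xi_1\oplus\cdots\oplus\sigma(g)\xi_n\}$ is a Riesz sequence in $H^{(n)}$ — which is exactly (ii).

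The main obstacle I anticipate is verifying that the amplified pair $(\tilde\pi,\tilde\sigma)$ genuinely satisfies all the axioms of a dual commutant pair — in particular the condition that $\tilde\pi$ is a \emph{frame} representation and $\tilde\sigma$ admits a Riesz vector (condition (ii) of Definition \ref{dual-pair}), since amplification can destroy the existence of a cyclic/complete frame vector unless the multiplicities are controlled; one needs the trace/dimension bookkeeping $Tr_{\tilde\pi(G)'}(I)=n\cdot Tr_{\pi(G)'}(I)$ and the fact that existence of a complete frame vector for a frame representation is equivalent to the finiteness (indeed the explicit value) of this trace, so that the amplified representation still has such a vector iff $\pi$ did. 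A secondary technical point is the "polarization/invariance" step identifying a frame for the diagonal representation on $H^{(n)}$ with the multi-frame condition on $H$; this is routine but must be done carefully because the generator lies on the diagonal subspace rather than being arbitrary in $H^{(n)}$, so one argues that the frame inequality for all of $H^{(n)}$ follows from its validity on the "coordinate slices" by using $\pi(G)$-invariance and the Cauchy–Schwarz expansion of $|\sum_i\langle y_i,\pi(g)\xi_i\rangle|^2$ together with the super-frame orthogonality already in hand from part (i). I would handle (i) first, then bootstrap (ii) from it and Theorem \ref{thm-main1}.
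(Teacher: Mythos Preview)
Your approach to part (ii) contains a fundamental error: the amplified pair $(\pi^{(n)},\sigma^{(n)})$ is \emph{not} a commutant pair on $H^{(n)}$. You correctly note that $(\pi^{(n)}(G))'=M_n(\pi(G)')$, but the double commutant $(\sigma^{(n)}(G))''$ is only the diagonal copy $\{A\oplus\cdots\oplus A:A\in\sigma(G)''\}$, and these two algebras are different for $n>1$; so Theorem~\ref{thm-main1} simply cannot be invoked for this pair. A second, independent problem is that the multi-frame condition on $H$ is \emph{not} equivalent to $\vec\xi$ being a complete frame vector for $\pi^{(n)}$ on $H^{(n)}$: if $\xi_1=\cdots=\xi_n=\xi$ with $\xi$ a complete frame vector for $\pi$, then $\vec\xi$ is certainly a multi-frame vector for $\pi$, yet $\{\pi^{(n)}(g)\vec\xi\}_{g\in G}$ spans only the diagonal subspace $\{x\oplus\cdots\oplus x:x\in H\}\subsetneq H^{(n)}$. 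Your proposed ``polarization/invariance'' patch cannot repair this, and invoking ``the super-frame orthogonality already in hand from part (i)'' is circular since a multi-frame vector need not be a super-frame vector. The paper explicitly remarks that Theorem~\ref{main-thm2} is not a direct consequence of Theorem~\ref{thm-main1}, and instead proves (ii) by a chain of lemmas: construct a Parseval multi-frame vector with the same analysis range (Lemma~\ref{lem2.3}), transfer range equality to equality of $\sigma^{(n)}$-cyclic subspaces (Lemma~\ref{lem2.4}), and use finiteness of $\pi(G)'$ to force invertibility of the linking operator (Lemma~\ref{lem2.1}); the other direction goes through Lemma~\ref{lem2.9}.

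Your approach to part (i) has a smaller but real gap in the direction ``$\cup_j\{\sigma(g)\xi_j\}_{g\in G}$ is a Riesz sequence $\Rightarrow$ $\vec\xi$ is a super-frame vector.'' You assert that the union being a Riesz sequence is \emph{equivalent} to each $\{\sigma(g)\xi_j\}_{g\in G}$ being a Riesz sequence together with mutual orthogonality $[\sigma(G)\xi_i]\perp[\sigma(G)\xi_j]$ for $i\neq j$. Subfamilies of Riesz sequences are indeed Riesz sequences, but the Riesz property does not force the blocks to be orthogonal: the Gram operator of the union can be bounded and invertible with nonzero off-diagonal blocks $\Theta_{\xi_i,\sigma}\Theta_{\xi_j,\sigma}^*$. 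Since (via Lemma~\ref{sublem2.4} and $\pi(G)'=\sigma(G)''$) the $\pi$-orthogonality you need is precisely $[\sigma(G)\xi_i]\perp[\sigma(G)\xi_j]$, the argument does not close. The paper handles this by an explicit orthonormalization: the operator $S=\sum_i\Theta_{\xi_i,\sigma}^*\Theta_{\xi_i,\sigma}$ lies in $\sigma(G)'=w^*(\pi(G))$, so applying $(S|_K)^{-1/2}$ on the span $K$ turns the Riesz basis into an orthonormal sequence while commuting with $\sigma(G)$, after which the Parseval case (which your outline does handle correctly) and Lemma~\ref{param-lem} finish the proof.
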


Since the time-frequency representations $\pi_{\Lambda}$ and $\pi_{\Lambda^{o}}$ form a dual commutant pair, we immediately have the following:

\begin{coro} \label{coro-1} Let $\Lambda$ be a time-frequency lattice and $\Lambda^{o}$ be its dual lattice. Let $g_{1}, ... , g_{k}\in L^{2}(\R^{d})$. Then 

(i)  $\{\pi_{\Lambda}(m, n)g_{1} \oplus ... \oplus \pi_{\Lambda}(m, n)g_{k}\}_{m, n \in \Z^{d}}$ is a frame for $L^{2}(\R^{d})\oplus ... \oplus L^{2}(\R^{d})$  if and only if $\cup_{i=1}^{k}\{\pi_{\Lambda^{o}}(m, n)g_{i}\}_{m, n\in\Z^{d}}$ is a Riesz sequence in $L^{2}(\R^{d})$. 

(ii) $\cup_{i=1}^{k}\{\pi_{\Lambda}(m, n)g_{i}\}_{m, n\in\Z^{d}}$ is a frame for $L^{2}(\R^{d})$ if and only if  $\{\pi_{\Lambda^{o}}(m, n)g_{1} \oplus ... \oplus \pi_{\Lambda^{o}}(m, n)g_{k}\}_{m, n \in \Z^{d}}$ is a Riesz sequence $L^{2}(\R^{d})\oplus ... \oplus L^{2}(\R^{d})$.

\end{coro}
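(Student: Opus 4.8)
\medskip

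The final statement, Corollary~\ref{coro-1}, is immediate from Theorem~\ref{main-thm2}: as recalled just before Definition~\ref{dual-pair}, for a pair of dual time--frequency lattices $\Lambda,\Lambda^{o}$ in $\R^{d}\times\R^{d}$ one of $\pi_{\Lambda},\pi_{\Lambda^{o}}$ is a frame representation of $\Z^{d}\times\Z^{d}$ and $(\pi_{\Lambda},\pi_{\Lambda^{o}})$ is a dual commutant pair, with $Tr_{\pi_{\Lambda}(G)'}(I)=vol(\Lambda)$; so, after interchanging $\Lambda$ and $\Lambda^{o}$ if necessary so that $\pi_{\Lambda}$ is the frame representation, Corollary~\ref{coro-1} is Theorem~\ref{main-thm2} applied with $G=\Z^{d}\times\Z^{d}$, $\pi=\pi_{\Lambda}$, $\sigma=\pi_{\Lambda^{o}}$, $n=k$. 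Thus the point is Theorem~\ref{main-thm2}, and I would prove it as follows.

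The plan is to rephrase each side of the two equivalences as the invertibility of one positive operator in a von Neumann algebra, and then to transport that invertibility across the $\pi$--$\sigma$ correspondence that is already at the heart of the single-generator duality principle (Theorem~\ref{thm-main1}). Fix $\xi_{1},\dots,\xi_{n}\in\mathcal B_{\pi}=\mathcal B_{\sigma}$ and write $\Theta_{i}=\Theta_{\xi_{i},\pi}$ and $\Psi_{i}=\Theta_{\xi_{i},\sigma}$. Recalling that $(\xi_{1},\dots,\xi_{n})$ is a super-frame vector for $\pi$ exactly when $\{\pi(g)\xi_{1}\oplus\cdots\oplus\pi(g)\xi_{n}\}_{g}$ is a frame for $H^{\oplus n}$, a direct unwinding of the analysis and synthesis operators (in which the upper Bessel bounds are automatic from $\xi_{i}\in\mathcal B_{\pi}=\mathcal B_{\sigma}$) gives four equivalences:
(a) $(\xi_{1},\dots,\xi_{n})$ is a super-frame vector for $\pi$ iff the operator matrix $(\Theta_{i}^{*}\Theta_{j})_{i,j}$ is invertible in $M_{n}(\pi(G)')$;
(b) $\bigcup_{j}\{\sigma(g)\xi_{j}\}_{g}$ is a Riesz sequence iff the operator matrix $(\Psi_{i}\Psi_{j}^{*})_{i,j}$ is invertible in $M_{n}(\lambda_{\sigma}(G)')$, where $\lambda_{\sigma}$ is the left regular projective representation on $\ell^{2}(G)$ associated with $\sigma$;
(c) $\bigcup_{i}\{\pi(g)\xi_{i}\}_{g}$ is a frame for $H$ iff $\sum_{i}\Theta_{i}^{*}\Theta_{i}$ is invertible in $\pi(G)'$;
(d) $\{\sigma(g)\xi_{1}\oplus\cdots\oplus\sigma(g)\xi_{n}\}_{g}$ is a Riesz sequence in $H^{\oplus n}$ iff $\sum_{i}\Psi_{i}\Psi_{i}^{*}$ is invertible in $\lambda_{\sigma}(G)'$.
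So part (i) is ``(a)$\Leftrightarrow$(b)'' and part (ii) is ``(c)$\Leftrightarrow$(d)''; the operators in (c),(d) are just the sums of the diagonal entries of the matrices in (a),(b).

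The bridge is the generalized fundamental identity, Theorem~\ref{main-thm1}(ii): for $\xi,\eta,x,y\in\mathcal B_{\pi}$ it says $\langle\Theta_{\eta,\pi}^{*}\Theta_{\xi,\pi}x,y\rangle=T^{-1}\langle\Theta_{y,\sigma}(x),\Theta_{\eta,\sigma}(\xi)\rangle$ with $T=Tr_{\pi(G)'}(I)$, and, unwinding the right-hand side, this is precisely the assertion that the assignment $\Theta_{\eta,\pi}^{*}\Theta_{\xi,\pi}\mapsto T^{-1}\Theta_{\xi,\sigma}\Theta_{\eta,\sigma}^{*}$ is well defined and (this being the mechanism behind the proof of Theorem~\ref{thm-main1} in \cite{DHL-JFA}) extends to a unital normal $*$-isomorphism $\Phi\colon\pi(G)'\to\lambda_{\sigma}(G)'$. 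Its entrywise amplification $\Phi_{n}\colon M_{n}(\pi(G)')\to M_{n}(\lambda_{\sigma}(G)')$ then sends $(\Theta_{i}^{*}\Theta_{j})_{i,j}$ to $T^{-1}(\Psi_{j}\Psi_{i}^{*})_{i,j}$ and $\sum_{i}\Theta_{i}^{*}\Theta_{i}$ to $T^{-1}\sum_{i}\Psi_{i}\Psi_{i}^{*}$. Since a $*$-isomorphism preserves invertibility (indeed spectra) and the positive scalar $T^{-1}$ is irrelevant, ``(a)$\Leftrightarrow$(b)'' and ``(c)$\Leftrightarrow$(d)'' follow from the characterizations above, which is Theorem~\ref{main-thm2}; feeding in $(\pi_{\Lambda},\pi_{\Lambda^{o}})$ and $T=vol(\Lambda)$ gives Corollary~\ref{coro-1}.

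The step requiring the most care is the construction of $\Phi$: turning the bilinear identity of Theorem~\ref{main-thm1}(ii) into an honest normal $*$-isomorphism — independence of the representing Bessel vectors, multiplicativity, $*$-preservation, isometry and $\sigma$-weak continuity, and the same for its inverse — together with the bookkeeping needed to carry this through the $n\times n$ amplification so that the combined frame operators on the $\pi$-side really correspond to the combined Bessel operators on the $\sigma$-side. This is in essence a matricial re-run of the proof of the single-generator duality principle. An alternative that avoids isolating $\Phi$ is to repeat the argument of Theorem~\ref{thm-main1} verbatim, systematically replacing the single frame operator $\Theta_{\xi,\pi}^{*}\Theta_{\xi,\pi}$ by $\sum_{i}\Theta_{\xi_{i},\pi}^{*}\Theta_{\xi_{i},\pi}$ in the proof of part (ii), and by the operator matrix $(\Theta_{\xi_{i},\pi}^{*}\Theta_{\xi_{j},\pi})_{i,j}$ in the proof of part (i).
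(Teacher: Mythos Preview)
Your deduction of the corollary is exactly the paper's: the paper records only that $(\pi_\Lambda,\pi_{\Lambda^o})$ is a dual commutant pair and states that Corollary~\ref{coro-1} follows immediately from Theorem~\ref{main-thm2}. On that point you match.

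The substantive divergence is in the argument you sketch for Theorem~\ref{main-thm2} itself. The paper does \emph{not} build a $*$-isomorphism $\Phi:\pi(G)'\to\lambda_\sigma(G)'$ and transport invertibility of Gram-type matrices. For part~(i) (Theorem~\ref{subthm-main2}) it first settles the Parseval case directly from the single-vector duality Theorem~\ref{thm-main1} together with the $\pi$-orthogonality criterion of Lemma~\ref{sublem2.4}, then reduces the general case to this via the parametrization Lemma~\ref{param-lem}. For part~(ii) (Theorem~\ref{subthm-main3}) it proceeds through a chain of structural lemmas: finiteness of $\pi(G)'$ to force invertibility of an intertwiner (Lemma~\ref{lem2.1}), a frame-sequence result on the $\sigma^{(k)}$ side (Lemma~\ref{lem2.2}), existence of a Parseval multi-frame with matching analysis range (Lemma~\ref{lem2.3}), a weak-equivalence transfer (Lemma~\ref{lem2.4}), completeness (Lemmas~\ref{lem2.5}--\ref{lem2.6}), and finally Lemma~\ref{lem2.9} to rule out a proper analysis range by exhibiting an orthogonal vector.

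Your Gram-matrix reformulations (a)--(d) are correct, and if a unital injective $*$-homomorphism $\Phi$ with $\Phi(\Theta_{\eta,\pi}^{*}\Theta_{\xi,\pi})=T^{-1}\Theta_{\xi,\sigma}\Theta_{\eta,\sigma}^{*}$ exists then both equivalences drop out at once by spectral permanence (surjectivity of $\Phi$ is not even needed). The gap is that you do not construct $\Phi$; you yourself flag this as ``the step requiring the most care.'' Theorem~\ref{main-thm1}(ii) is a scalar identity, and upgrading it to well-definedness and multiplicativity of an operator-level map is exactly the work the paper's lemmas perform by other means. Note too that Theorem~\ref{thm-ICC} shows that, in the ICC regular case, the existence of an isomorphism $\pi(G)'\cong\lambda(G)'$ is \emph{equivalent} to the dual-pair hypothesis --- so this step genuinely consumes the assumption and is not a formality. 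Your route, once $\Phi$ is in hand, is cleaner and closer to the Hilbert-module picture; the paper's is longer but fully self-contained within the lemmas it proves.
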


\section{ Proof of Theorem \ref{main-thm1}}

We refer to \cite{Di, KR, Mvon} for any standard terminologies and basic properties about von Neumann algebras that will be used in the rest of the paper.
In what follows we will also use $[K]$ to denote the closed subspace generated by a subset $K$ of a Hilbert space $H$. Theorem \ref{thm-main1} and the following  lemmas are needed in the proofs for both Theorem \ref{main-thm1} and Theorem \ref{main-thm2}.

\begin{lem}  \cite{GH1} \label{lem-2.1} Let $\pi$ be a projective
representation of a countable group $G$ on a Hilbert space $H$
such that $\mathcal{B}_{\pi}$ is dense in $H$. Then
$$
\pi(G)' =  \overline{span}^{WOT}\{\Theta_{\eta}^{*}\Theta_{\xi}:
\xi, \eta\in \mathcal{B}_{\pi}\},
$$
where ``$WOT$" denotes the closure in the weak operator topology. 
\end{lem}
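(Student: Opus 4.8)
\smallskip\noindent\textit{Proof sketch.} Write $N=\pi(G)'$ and $\mathcal M=\overline{\mathrm{span}}^{\,\mathrm{WOT}}\{\Theta_\eta^*\Theta_\xi:\xi,\eta\in\mathcal B_\pi\}$; since $\xi\in\mathcal B_\pi$ forces $\Theta_\xi$ to be a bounded operator on $H$, the operators $\Theta_\eta^*\Theta_\xi$ are bounded. The inclusion $\mathcal M\subseteq N$ is immediate: each $\Theta_\eta^*\Theta_\xi$ commutes with $\pi(G)$ (the remark preceding the lemma, which rests on the intertwining identity $\Theta_\xi\pi(h)=\lambda_h\Theta_\xi$ read off from $\Theta_\xi^*\chi_g=\pi(g)\xi$ and the cocycle relations for $\mu$), and $N$ is a WOT-closed linear subspace. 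The content of the lemma is the reverse inclusion $\pi(G)'\subseteq\mathcal M$, and the plan is to obtain it by recognizing $\mathcal M$ as a two-sided ideal of the von Neumann algebra $N$ and then exploiting the density of $\mathcal B_\pi$.

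First I would verify that $\mathcal M$ is a $\ast$-closed $N$-bimodule. It is $\ast$-closed since $(\Theta_\eta^*\Theta_\xi)^*=\Theta_\xi^*\Theta_\eta$ is again of the same form and the adjoint is WOT-continuous. For the bimodule property the crucial point is that $\mathcal B_\pi$ is invariant under $N$: if $T\in N$ and $\xi\in\mathcal B_\pi$, then $\pi(g)T=T\pi(g)$ gives $\Theta_{T\xi}=\Theta_\xi T^*$, which is bounded, so $T\xi\in\mathcal B_\pi$. Hence $T(\Theta_\eta^*\Theta_\xi)=\Theta_{T\eta}^*\Theta_\xi$ and $(\Theta_\eta^*\Theta_\xi)T=\Theta_\eta^*\Theta_{T^*\xi}$ are again generators of $\mathcal M$; since left and right multiplication by a fixed operator are WOT-continuous, it follows that $N\mathcal M\subseteq\mathcal M$ and $\mathcal M N\subseteq\mathcal M$. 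Thus $\mathcal M$ is a WOT-closed two-sided ideal of $N$, and by the standard structure theory of such ideals (see \cite{KR}) there is a central projection $z\in N$ with $\mathcal M=zN=Nz$.

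It remains to show $z=I$, which then gives $\mathcal M=N=\pi(G)'$. For $\xi\in\mathcal B_\pi$, the positive operator $\Theta_\xi^*\Theta_\xi$ lies in $\mathcal M=zN$, so $\Theta_\xi^*\Theta_\xi=z\Theta_\xi^*\Theta_\xi$ and its range is contained in $zH$. On the other hand $\ker(\Theta_\xi^*\Theta_\xi)=\ker\Theta_\xi=\{y\in H:\langle y,\pi(g)\xi\rangle=0\text{ for all }g\in G\}$, so the closure of the range of $\Theta_\xi^*\Theta_\xi$ equals $(\ker\Theta_\xi)^\perp=[\,\{\pi(g)\xi:g\in G\}\,]$, which contains $\pi(e)\xi=\xi$. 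Therefore $\xi\in zH$ for every $\xi\in\mathcal B_\pi$, and since $\mathcal B_\pi$ is dense in $H$ and $zH$ is closed, $zH=H$, i.e.\ $z=I$.

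The heart of the argument — and the step I expect to demand the most care — is the identification of $\mathcal M$ as a two-sided ideal of $N$, especially the $N$-invariance of $\mathcal B_\pi$ together with the bookkeeping identities $\Theta_\xi\pi(h)=\lambda_h\Theta_\xi$ and $\Theta_{T\xi}=\Theta_\xi T^*$ (where one must carry the multiplier $\mu$ correctly). Once $\mathcal M$ is known to be a weakly closed ideal, the density of $\mathcal B_\pi$ forces the associated central projection to be $I$ with essentially no further effort; the remaining ingredients are soft von Neumann algebra theory.
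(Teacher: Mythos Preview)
The paper does not prove this lemma; it is quoted from \cite{GH1} and stated without argument, so there is no ``paper's own proof'' to compare against. Your proof is correct and self-contained: the inclusion $\mathcal M\subseteq\pi(G)'$ is exactly the remark the paper records before the lemma, and your reverse inclusion via the ideal structure of $\pi(G)'$ is clean. The identities $\Theta_{T\xi}=\Theta_\xi T^*$ for $T\in\pi(G)'$ and the resulting bimodule relations $T\Theta_\eta^*\Theta_\xi=\Theta_{T\eta}^*\Theta_\xi$, $\Theta_\eta^*\Theta_\xi T=\Theta_\eta^*\Theta_{T^*\xi}$ hold verbatim (no stray multiplier appears, since $T$ commutes with each $\pi(g)$ on the nose), so the step you flag as delicate is in fact routine. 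The final step---observing that $\xi\in\overline{\mathrm{ran}}(\Theta_\xi^*\Theta_\xi)\subseteq zH$ for every Bessel vector $\xi$, whence $z=I$ by density---is the right way to close the argument.

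One cosmetic remark: you could alternatively avoid the ideal/central-projection machinery by arguing directly that $I\in\mathcal M$ (approximate $I$ in WOT by $\Theta_\xi^*\Theta_\xi$ with $\xi$ running over Parseval frame vectors for larger and larger $\pi$-invariant subspaces, using Lemma~\ref{sublem2.2}); your route through the structure of WOT-closed ideals is more conceptual and no harder.
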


\begin{lem} \cite{GH1} \label{lem-2.2} Let $\pi$ be a projective
representation of a countable group $G$ on a Hilbert space $H$
such that $\mathcal{B}_{\pi}$ is dense in $H$. If $\{\pi(g)\xi_{i}, g\in G, i=1, ... , n\}$ is a Parseval frame for $H$, then 
$$
Tr_{\pi(G)'}(A) = \sum_{i=1}^{n}\langle A\xi, \xi\rangle
$$
defines a faithful trace on $\pi(G)'$. Moreover, this is independent the choices of the Parseval multi-frame vector $\vec{\xi} = (\xi_{1}, .. , \xi_{n})$. 
\end{lem}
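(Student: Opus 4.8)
The plan is to reduce the multi-frame statement to the single-vector case already established in \cite{GH1}, via a standard amplification (dilation) trick. First I would form the $n$-fold amplification $\widetilde\pi = \pi \otimes I_n$ acting on $\widetilde H = H \oplus \cdots \oplus H$ ($n$ copies), i.e. $\widetilde\pi(g) = \pi(g)\oplus\cdots\oplus\pi(g)$. A Parseval multi-frame vector $\vec\xi = (\xi_1,\dots,\xi_n)$ for $\pi$ corresponds to the single vector $\xi = \xi_1\oplus\cdots\oplus\xi_n \in \widetilde H$, and one checks directly from the definitions that $\{\pi(g)\xi_i : g\in G, i=1,\dots,n\}$ is a Parseval frame for $H$ if and only if $\{\widetilde\pi(g)\xi : g\in G\}$ is a Parseval frame \emph{sequence} for $\widetilde H$ whose closed span is exactly the cyclic subspace $[\widetilde\pi(G)\xi]$. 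Since $\widetilde\pi$ restricted to that cyclic subspace is again a projective representation of $G$ with a dense set of Bessel vectors, the single-vector result of \cite{GH1} applies there.

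Next I would invoke the single-vector trace formula: for a complete Parseval frame vector $\xi$ of a representation $\rho$ on a space $K$ with $\mathcal B_\rho$ dense, $A \mapsto \langle A\xi,\xi\rangle$ is a faithful normal trace on $\rho(G)'$. Applying this to $\rho = \widetilde\pi|_{[\widetilde\pi(G)\xi]}$ gives that $A \mapsto \langle (A\oplus\cdots\oplus A)\xi,\xi\rangle = \sum_{i=1}^n \langle A\xi_i,\xi_i\rangle$ is a faithful normal trace on the relevant commutant; here one uses that the diagonal operator $A\oplus\cdots\oplus A$ lies in $\widetilde\pi(G)'$ whenever $A\in\pi(G)'$, and that compression to the cyclic subspace is a normal $*$-homomorphism that is injective on $\{A\oplus\cdots\oplus A : A\in\pi(G)'\}$ (injectivity because $[\widetilde\pi(G)\xi]$ is a separating subspace — its projection is in $\widetilde\pi(G)'$ and dominates, up to the amplification, a cyclic-and-separating configuration for $\pi(G)'$). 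The trace property, faithfulness, and normality then descend to the formula $Tr_{\pi(G)'}(A) = \sum_{i=1}^n\langle A\xi_i,\xi_i\rangle$ on $\pi(G)'$.

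For the independence statement, I would use Lemma \ref{lem-2.1}: any element of $\pi(G)'$ is a WOT-limit of linear combinations of operators $\Theta_\eta^*\Theta_\zeta$ with $\eta,\zeta\in\mathcal B_\pi$, so by normality and linearity it suffices to check that $\sum_{i=1}^n\langle \Theta_\eta^*\Theta_\zeta \,\xi_i,\xi_i\rangle$ is the same for any two Parseval multi-frame vectors. Expanding, $\sum_i \langle \Theta_\eta^*\Theta_\zeta\xi_i,\xi_i\rangle = \sum_i \sum_{g} \langle \xi_i,\pi(g)\zeta\rangle\langle\pi(g)\eta,\xi_i\rangle = \sum_g \langle \Theta_{\vec\xi}\,\pi(g)\eta, \Theta_{\vec\xi}\,\pi(g)\zeta\rangle$-type sums, which collapse because $\Theta_{\vec\xi}^*\Theta_{\vec\xi} = I$ for a Parseval multi-frame vector; after rearranging using that $\Theta_\zeta,\Theta_\eta$ intertwine $\pi$ with the shift, the expression reduces to $\langle \Theta_\zeta^*\Theta_\eta$ applied to an object independent of $\vec\xi\rangle$, namely $\langle \eta,\zeta\rangle$-related data. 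Concretely I expect the clean identity $\sum_i\langle \Theta_\eta^*\Theta_\zeta\xi_i,\xi_i\rangle = \langle\zeta,\eta\rangle$ (or a fixed multiple thereof), manifestly independent of the choice of $\vec\xi$.

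The main obstacle, and the step deserving the most care, is the injectivity/faithfulness transfer: one must verify that compressing $\pi(G)'$ (amplified diagonally) to the cyclic subspace $[\widetilde\pi(G)\xi]$ loses no information, i.e. that $(A\oplus\cdots\oplus A)P_{[\widetilde\pi(G)\xi]} = 0$ forces $A=0$ for $A\in\pi(G)'$. This is where the hypothesis that each $\xi_i$ generates — collectively — a \emph{complete} Parseval frame is essential; equivalently, $\{\xi_1,\dots,\xi_n\}$ must be a separating set for $\pi(G)'$, which follows from density of $\mathcal B_\pi$ together with the frame condition via a standard argument (if $A\xi_i=0$ for all $i$ and $A\in\pi(G)'$, then $\langle Ay,\pi(g)\xi_i\rangle = \langle y,\pi(g)A^*\xi_i\rangle$... tracking this through shows $A$ annihilates a dense set). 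Everything else is bookkeeping with the amplified representation and the already-cited single-vector theorem. \eproof
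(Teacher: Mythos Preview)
The paper does not prove this lemma at all --- it is quoted from \cite{GH1} as a known result --- so there is no ``paper's own proof'' to compare against. I can only assess your argument on its own merits, and there is a genuine gap in the amplification step.

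Your central claim, that $\{\pi(g)\xi_i : g\in G,\ i=1,\dots,n\}$ is a Parseval frame for $H$ if and only if $\{\widetilde\pi(g)\vec\xi\}_{g\in G}$ is a Parseval frame sequence in $H^{(n)}$, is false. Take $G=\{e\}$ trivial, $H=\C^2$, $n=2$, and $\xi_1=e_1$, $\xi_2=e_2$. Then $\{\xi_1,\xi_2\}$ is an orthonormal basis for $H$, hence a Parseval (multi-)frame. But $\vec\xi=(e_1,e_2)\in\C^4$ has $\|\vec\xi\|^2=2$, so the one-element family $\{\vec\xi\}$ is a \emph{tight} frame for its span with bound $2$, not a Parseval frame sequence. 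More generally, the Bessel operator of $\{\widetilde\pi(g)\vec\xi\}_{g\in G}$ on $H^{(n)}$ is the operator matrix $S=[\Theta_{\xi_j}^*\Theta_{\xi_i}]_{j,i}\in M_n(\pi(G)')$; the Parseval multi-frame condition says only that the \emph{diagonal sum} $\sum_i\Theta_{\xi_i}^*\Theta_{\xi_i}$ equals $I_H$, and this does not force $S$ to be a projection. So you cannot invoke the single-vector trace theorem on $\widetilde\pi|_{[\widetilde\pi(G)\vec\xi]}$ with the vector $\vec\xi$, and the rest of your trace/faithfulness argument, which rests on that invocation, does not go through.

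Your independence argument, by contrast, is essentially on the right track: using Lemma~\ref{lem-2.1} and normality to reduce to generators $\Theta_\eta^*\Theta_\zeta$, one can compute directly (reindex $g\mapsto g^{-1}$ and use that the multiplier factors cancel in modulus) that $\sum_{i=1}^n\langle\Theta_\eta^*\Theta_\zeta\,\xi_i,\xi_i\rangle=\langle\eta,\zeta\rangle$, which is visibly independent of $\vec\xi$. In fact this same computation can be pushed to give the trace property as well, but faithfulness and normality still need a separate argument --- and the amplification route you propose does not supply it without substantial repair.
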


\begin{lem}\label{prop-sub}   Let $\pi$ be a  projective unitary representation $\pi$ of a countable
group $G$ on a Hilbert space $H$. Then $\pi$ is a frame
representation if and only if $\pi$ is unitarily equivalent to a
subrepresentation of the left regular projective unitary
representation of $G$. Consequently, if  $\pi$ is a frame
representation, then both $\pi(G)'$ and $\pi(G)''$ are finite von
Neumann algebras.
\end{lem}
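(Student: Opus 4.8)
The plan is to prove the two directions of the equivalence separately and then read off the consequence about finiteness. For the ``if'' direction, suppose $\pi$ is unitarily equivalent to $\lambda|_P$ for some projection $P \in \lambda(G)'$, where $\lambda$ is the left regular $\mu$-projective representation of $G$ on $\ell^2(G)$. I would show directly that $P\chi_e$ is a complete frame vector (in fact a complete Parseval frame vector) for $\lambda|_P$: indeed, for $y \in P\ell^2(G)$ one computes $\sum_{g\in G}|\langle y, \lambda_g P\chi_e\rangle|^2 = \sum_{g\in G}|\langle P y, \lambda_g\chi_e\rangle|^2 = \sum_{g\in G}|\langle y, \lambda_g\chi_e\rangle|^2$, using $Py = y$ and $P \in \lambda(G)'$; and since $\{\lambda_g\chi_e\}_{g\in G} = \{\mu(g,e)\chi_g\}_{g\in G}$ is an orthonormal basis of $\ell^2(G)$, Parseval's identity gives that this sum equals $\|y\|^2$. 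Hence $\pi$ is a frame representation. For the ``only if'' direction, let $\xi$ be a complete frame vector for $\pi$ on $H$; replacing $\xi$ by $S^{-1/2}\xi$ (which is legitimate since $S = \Theta_\xi^*\Theta_\xi$ is bounded, positive and invertible, and as noted in the excerpt $S^{-1/2} \in \pi(G)'$), we may assume $\xi$ is a complete Parseval frame vector. Then the analysis operator $\Theta_\xi : H \to \ell^2(G)$ is an isometry, and the standard intertwining computation $\Theta_\xi \pi(g) = \lambda_g \Theta_\xi$ (which uses the cocycle identity for $\mu$) shows $\Theta_\xi$ intertwines $\pi$ with $\lambda$. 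Therefore $P := \Theta_\xi\Theta_\xi^*$ is a projection in $\lambda(G)'$, and $\Theta_\xi$ is a unitary from $H$ onto $P\ell^2(G)$ conjugating $\pi$ to $\lambda|_P$.

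For the consequence, once $\pi \cong \lambda|_P$ with $0 \neq P \in \lambda(G)'$, we have $\pi(G)' \cong P\lambda(G)'P$ and $\pi(G)'' \cong P\lambda(G)''P$ (the latter because compression to a subspace invariant under $\lambda(G)$ and $\lambda(G)'$ sends $\lambda(G)''$ onto $P\lambda(G)''P$, cf. the standard reduction theory for von Neumann algebras). The left regular $\mu$-projective von Neumann algebra $\lambda(G)''$ is a finite von Neumann algebra: the vector state $A \mapsto \langle A\chi_e, \chi_e\rangle$ restricts to a faithful normal tracial state on it, by the usual computation with the $\mu$-twisted convolution (faithfulness comes from $\chi_e$ being separating for $\lambda(G)''$, which holds because $\chi_e$ is cyclic for the commutant $\rho(G)'' \subseteq \lambda(G)'$). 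Its commutant $\lambda(G)'$ contains $\rho(G)''$ and also has $\chi_e$ as a cyclic and separating vector, so it too is finite. A corner $PMP$ of a finite von Neumann algebra $M$ is again finite, so both $\pi(G)'$ and $\pi(G)''$ are finite.

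I expect the only genuinely delicate point to be the careful bookkeeping with the $2$-cocycle $\mu$ in the intertwining relation $\Theta_\xi\pi(g) = \lambda_g\Theta_\xi$ and in verifying that the vector state at $\chi_e$ is tracial on $\lambda(G)''$; these are routine but must be done with the correct placement of the multiplier factors $\mu(g,h)$ and $\overline{\mu(g,h)}$. Everything else is a direct application of Parseval's identity, the polar-decomposition/intertwiner argument, and standard facts about corners of finite von Neumann algebras. Note also that $\mathcal{B}_\pi$ is automatically dense in $H$ (it contains the complete frame vector $\xi$ and its $\pi(G)$-orbit, whose span is $H$), so the earlier lemmas and the closability remarks apply without extra hypotheses.
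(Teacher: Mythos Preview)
The paper does not give a proof of this lemma; it is stated as a known background result (in the same block as the lemmas cited from \cite{GH1, HL, GH}), so there is nothing to compare against line by line. Your argument is the standard one and is correct: for the ``if'' direction, $P\chi_e$ is indeed a complete Parseval frame vector for $\lambda|_P$ since $\lambda_g\chi_e=\chi_g$ and $P$ commutes with $\lambda$; for the ``only if'' direction, the analysis operator of a complete Parseval frame vector is an isometry satisfying $\Theta_\xi\pi(g)=\lambda_g\Theta_\xi$ (your remark that the cocycle bookkeeping is the only delicate point is accurate, and the identity does go through using $\pi(g)\pi(g^{-1}k)=\mu(g,g^{-1}k)\pi(k)$).

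One small terminological slip in the finiteness paragraph: you write that $P\lambda(G)''P$ is a ``corner'' of $\lambda(G)''$ and then invoke the fact that corners of finite von Neumann algebras are finite. Strictly speaking, $P$ lies in $\lambda(G)'$, not in $\lambda(G)''$, so $P\lambda(G)''P$ is not a corner in the usual sense $eMe$ with $e\in M$. The conclusion is still true---the compression $A\mapsto A|_{P\ell^2(G)}$ is a normal $*$-homomorphism of $\lambda(G)''$ whose image is isomorphic to a direct summand $\lambda(G)''(1-z)$ for a central projection $z$, hence finite---but you should phrase it that way rather than as a corner argument. The statement for $\pi(G)'\cong P\lambda(G)'P$ is a genuine corner (since $P\in\lambda(G)'$) and needs no adjustment.
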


\begin{lem}\label{param-lem} \cite{GH1, HL} Let $\pi$ be a projective representation of a
countable group $G$ on a Hilbert space $H$ and
$\{\pi(g)\xi\}_{g\in G}$ is a Parseval frame for $H$. Then

\begin{enumerate}[label=(\roman*)]

\item $\{\pi(g)\eta\}_{g\in G}$ is a Parseval frame for $H$ if and
only if there is a unitary operator $U\in \pi(G)''$ such that
$\eta = U\xi$;

\item $\{\pi(g)\eta\}_{g\in G}$ is a frame for $H$ if and only if
there is an invertible operator $U\in \pi(G)''$ such that $\eta =
U\xi$;

\item  $\{\pi(g)\eta\}_{g\in G}$ is a Bessel sequence if and only
if there is an operator $U\in \pi(G)''$ such that $\eta = U\xi$,
i.e., $\mathcal{B}_{\pi} = \pi(G)''\xi$.

\end{enumerate}
\end{lem}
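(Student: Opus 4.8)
The plan is to prove all three statements from a single model, obtained from the complete Parseval frame vector $\xi$. First I would record that $\Theta_\xi$ is an isometric intertwiner of $\pi$ with the left regular $\mu$-projective representation $\lambda$. Indeed a direct cocycle computation, using $\mu(g,g^{-1}g')=\mu(g,g^{-1})\overline{\mu(g^{-1},g')}$ (a consequence of the $2$-cocycle identity) together with $\Theta_\xi^*\chi_g=\pi(g)\xi$, gives $\Theta_\xi\pi(g)=\lambda(g)\Theta_\xi$. Since $\xi$ is Parseval we have $\Theta_\xi^*\Theta_\xi=I$, so $\Theta_\xi$ is an isometry onto a $\lambda$-invariant subspace $M=[\{\lambda(g)\Theta_\xi\xi\}]=\mathrm{ran}\,\Theta_\xi$ whose projection $P_M=\Theta_\xi\Theta_\xi^*$ lies in $\lambda(G)'$. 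Transporting along $\Theta_\xi$, I may replace $(\pi,H,\xi)$ by $(\lambda|_M,M,P_M\chi_e)$, noting $\Theta_\xi\xi=P_M\chi_e$, $\pi(G)''\cong\lambda(G)''|_M$ and $\pi(G)'\cong P_M\lambda(G)'P_M|_M$. The decisive structural point, which is exactly why the \emph{double} commutant appears, is that elements of $\lambda(G)''$ commute with $P_M\in\lambda(G)'$.

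I would prove (iii) first, treating (i) and (ii) as refinements. In the model I need the basic dictionary that the Bessel vectors of the left regular representation are precisely $\lambda(G)''\chi_e$; equivalently, the left- and right-bounded vectors coincide and both equal $\lambda(G)'\chi_e$. For \emph{necessity}, given $\eta\in\mathcal B_\pi$ I transport it to $\hat\eta=\Theta_\xi\eta\in M$, a Bessel vector for $\lambda$; writing $\hat\eta=A\chi_e$ with $A\in\lambda(G)''$ and putting $U:=A|_M$, the commutation $[A,P_M]=0$ yields $U(P_M\chi_e)=AP_M\chi_e=P_MA\chi_e=P_M\hat\eta=\hat\eta$, so $U\in\pi(G)''$ reconstructs $\eta=U\xi$. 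This is precisely where the commutant alone would fail: a $V\in\lambda(G)'$ with $V\chi_e=\hat\eta$ need not commute with $P_M$, so $P_MVP_M\chi_e\ne\hat\eta$ in general. For \emph{sufficiency}, $U\in\pi(G)''$ corresponds to $A|_M$ with $A\in\lambda(G)''$, whence $U\xi$ corresponds to $P_M(A\chi_e)$; as $A\chi_e\in\lambda(G)''\chi_e=\lambda(G)'\chi_e$ is a bounded vector and $P_M\in\lambda(G)'$, we get $P_M(A\chi_e)\in\lambda(G)'\chi_e$, again Bessel, so $\{\pi(g)U\xi\}$ is Bessel.

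With (iii) in hand, I would deduce (i) and (ii) by computing the frame operator of $\eta=U\xi$ in the model. Writing $\hat\eta=V\chi_e$ with $V\in\lambda(G)'$, one has $\Theta_{\hat\eta}=V^*$ and hence frame operator $S_{\hat\eta}=VV^*|_M$ on $M$; thus $\{\pi(g)\eta\}$ is a Parseval frame iff $V^*|_M$ is an isometry, and a frame iff $V^*|_M$ is bounded below, with the orthogonal-sequence case corresponding to $V^*|_M$ a coisometry. It then remains to translate these into ``$U$ unitary'' and ``$U$ invertible'' inside the finite von Neumann algebra $\pi(G)''$ (finiteness furnished by Lemma~\ref{prop-sub}), using the relation between the $\lambda(G)''$-representative $A$ and the $\lambda(G)'$-representative $V$ of the single bounded vector $\hat\eta$, and invoking finiteness to upgrade one-sided invertibility to two-sided.

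The main obstacle is the necessity half of (iii): identifying the Bessel vectors of $\lambda$ with $\lambda(G)''\chi_e$ and, equivalently, establishing the coincidence of the left- and right-bounded vectors of the regular representation. This is a standard-form (tracial) phenomenon---$\chi_e$ is cyclic and separating and the trace is finite by Lemma~\ref{prop-sub}---and it is exactly what permits the reconstructing operator to be chosen in the double commutant $\pi(G)''$ rather than merely in the commutant. Once this bounded-vector dictionary and the commutation $[\lambda(G)'',P_M]=0$ are secured, the three equivalences and their Parseval and orthogonal refinements follow from the bookkeeping above.
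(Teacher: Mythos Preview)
The paper does not supply its own proof of this lemma; it is quoted verbatim from the references \cite{GH1, HL}. So there is no in-paper argument to compare against line by line. Your approach---realize $\pi$ as $\lambda|_M$ via the isometric intertwiner $\Theta_\xi$, identify $\pi(G)''$ with $\lambda(G)''|_M$ using $[\lambda(G)'',P_M]=0$, and invoke the bounded-vector dictionary $\mathcal B_\lambda=\lambda(G)''\chi_e=\lambda(G)'\chi_e$ coming from the tracial standard form---is exactly the strategy of those references, and it is correct.

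Two small points. First, in your frame-operator computation you write $S_{\hat\eta}=VV^*|_M$; since $V\in\lambda(G)'$ need not commute with $P_M\in\lambda(G)'$, the adjoint of $V^*|_M:M\to\ell^2(G)$ is $P_MV$, so the correct expression is $S_{\hat\eta}=P_MVV^*|_M$. This does not affect the logic: the frame condition is still ``$V^*|_M$ bounded below,'' and Parseval is ``$V^*|_M$ isometric.'' Second, the ``translation'' in (i)--(ii) from conditions on $V\in\lambda(G)'$ to conditions on $U=A|_M\in\pi(G)''$ goes cleanly through the modular conjugation $J$ on $\ell^2(G)$: one has $V=JA^*J$ with $J$ an anti-unitary involution exchanging $\lambda(G)''$ and $\lambda(G)'$ and fixing $\chi_e$, and $JP_MJ$ is a projection in $\lambda(G)''$; combined with finiteness (your citation of Lemma~\ref{prop-sub}) this converts ``$V^*|_M$ isometric/bounded below'' into ``$A|_M$ unitary/invertible.'' You flag this step but leave it as a sketch; it deserves one more sentence to make the $J$-symmetry explicit, since that is precisely what links the two representatives $A$ and $V$ of the same bounded vector $\hat\eta$.
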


\noindent {\bf Proof of Theorem \ref{main-thm1}.} 

 Let $(\pi, \sigma)$ be a  dual commutant pair of representations for $G$ on a Hilbert space $H$.

(i)  Let $\{\pi(g)\xi\}$ be a frame for $H$ and $S$ be its analysis operator. Write $\eta = S^{-1/2}\xi$. Then $\{\pi(g)\eta\}_{g\in G}$ is a Parserval frame for $H$.  By Lemma \ref{lem-2.2} we have that $Tr_{\pi(G)'}(A) : = \langle A\eta, \eta\rangle $ defines a faithful trace on $w^{*}(\sigma(G))$, where $w^{*}(\sigma(G))$ is the von Neumann algebra generated by $\sigma(G)$ and it is equal to $\pi(G)'$.  Note that $S, \sigma(g) \in \pi(G)'$.  Thus we have
\begin{eqnarray*}
   \langle \sigma(g)\eta, \eta \rangle &=& Tr_{\pi(G)'}(\sigma(g)) = Tr_{\pi(G)'}(S^{-1/2}\sigma(g)S^{1/2}) \\
   & = & \langle S^{-1/2}\sigma(g)S^{1/2}\eta, \eta \rangle = \langle \sigma(g)\xi, S^{-1}\xi \rangle.
\end{eqnarray*}
However,  by Theorem \ref{thm-main1}, $\{\sigma(g)\eta\}_{g\in G}$ is an orthogonal sequence.
Thus we have $\langle \sigma(g)\xi,  S^{-1}\xi\rangle = 0$ for any $g\neq e$. Observe that $\langle \sigma(e)\xi,  S^{-1}\xi\rangle = ||S^{-1/2}\xi||^{2} = Tr_{\pi(G)'}(I)$. So we get the biorthogonality realtion:
$$
\langle \sigma(\xi), S^{-1}\xi\rangle =  Tr_{\pi(G)'}(I)\delta_{g, e}.
$$

(ii) Let $\varphi$  be a Parserval frame vector for $\pi$. Then by Theorem \ref{thm-main1} we get that $\{{1\over \sqrt{Tr_{\pi(G)'}(I)}}\sigma(g)\varphi\}_{g\in G}$ is an orthonormal basis for $[\pi(G)'\varphi]$. Since $\Theta_{\xi, \pi}^{*}\Theta_{\eta, \pi}\in \pi(G)' = w^{*}(\sigma(G))$, we get that 
$\Theta_{\xi, \pi}^{*}\Theta_{\eta, \pi}\varphi \in [\sigma(G)\varphi]$. This implies that 
$$
\Theta_{\xi, \pi}^{*}\Theta_{\eta, \pi}\varphi = (Tr_{\pi(G)'}(I))^{-1}\sum_{g\in G}c_{g}\sigma(g)\varphi,
$$
where $c_{g} = \langle \Theta_{\xi, \pi}^{*}\Theta_{\eta, \pi}\varphi, \sigma(g)\varphi\rangle$. 

By Lemma \ref{param-lem} there is an operator $A\in w^{*}(\pi(G))$ such that $y = A\varphi$. So we have
\begin{eqnarray*}
\Theta_{\xi, \pi}^{*}\Theta_{\eta, \pi}(y) &= & \Theta_{\xi, \pi}^{*}\Theta_{\eta, \pi}(A\varphi) = A\Theta_{\xi, \pi}^{*}\Theta_{\eta, \pi}(\varphi) \\
&=& (Tr_{\pi(G)'}(I))^{-1}\sum_{g\in G}c_{g}A\sigma(g)\varphi\\
&=& (Tr_{\pi(G)'}(I))^{-1}\sum_{g\in G}c_{g}\sigma(g)A\varphi\\
&=& (Tr_{\pi(G)'}(I))^{-1}\sum_{g\in G}c_{g}\sigma(g)y\\
\end{eqnarray*}
Therefore we get 
$$\langle \Theta_{\xi, \pi}(x), \Theta_{\eta, \pi}(y)\rangle = \langle  x, \  \Theta_{\xi, \pi}^{*}\Theta_{\eta, \pi}(y)\rangle = (Tr_{\pi(G)'}(I))^{-1}\sum_{g\in G}\overline{c}_{g}\langle x, \sigma(g)y\rangle.$$

Now we compute $c_{g}$: 
\begin{eqnarray*}
c_{g} &=& \langle \Theta_{\xi, \pi}^{*}\Theta_{\eta, \pi}\varphi, \sigma(g)\varphi\rangle = \langle \Theta_{\eta, \pi}(\varphi), \Theta_{\xi, \pi}(\sigma(g)\varphi)\rangle\\
& = & \sum_{h\in G} \langle \varphi, \pi(h)\eta\rangle \cdot\overline{ \langle \sigma(g)\varphi, \pi(h)\xi \rangle} \\
& = & \sum_{h\in G} \langle \varphi, \pi(h)\eta\rangle \cdot \langle \pi(h)\xi, \sigma(g)\varphi \rangle \\
& = & \sum_{h\in G} \langle \pi(h^{-1})\varphi, \eta\rangle\cdot  \langle \sigma(g^{-1})\xi, \pi(h^{-1})\varphi\rangle\\
& = & \sum_{h\in G} \langle \sigma(g^{-1})\xi, \pi(h^{-1})\varphi\rangle \cdot \langle \pi(h^{-1})\varphi, \eta\rangle\\
& = & \sum_{h\in G} \langle \sigma(g^{-1})\xi, \pi(h)\varphi\rangle \cdot \langle \pi(h)\varphi, \eta\rangle\\
& = &  \langle \sigma(g^{-1})\xi, \eta\rangle = \langle \xi, \sigma(g)\eta\rangle, 
\end{eqnarray*}
where we used the fact that $\sigma(g)$ and $\pi(h)$ commute for all $g, h\in G$, and that $\{\pi(h)\varphi\}_{h\in G}$ is a Parserval frame $H$. 

Finally we have 
\begin{eqnarray*}
\langle \Theta_{\xi, \pi}(x), \Theta_{\eta, \pi}(y)\rangle &= & \langle  x, \  \Theta_{\xi, \pi}^{*}\Theta_{\eta, \pi}(y)\rangle \\
&= & (Tr_{\pi(G)'}(I))^{-1}\sum_{g\in G}\overline{c}_{g}\langle x, \sigma(g)y\rangle \\
& = &(Tr_{\pi(G)'}(I))^{-1}\sum_{g\in G}\langle\sigma(g)\eta, \xi\rangle,\langle x, \sigma(g)y\rangle\\
& = & (Tr_{\pi(G)'}(I))^{-1}\langle \Theta_{y, \sigma}(x), \Theta_{\eta, \sigma}(\xi)\rangle.
\end{eqnarray*}
This completes the proof. \qed

\section{Proof of Theorem \ref{main-thm2}} 

The proof of Theorem \ref{main-thm2} is much more subtle and complicated. While Theorem \ref{thm-main1} will be needed, it is not a direct consequence of the theorem. For the purpose of clarity we divide the proof into two theorems with  one of them concerning the duality for multi-frame generators and the other one dealing with the duality for super-frame generators. We need a series of lemmas for both cases. In what follows we use $H^{(k)}$ to denote the orthogonal direct sum of a Hilbert space $H$ and $\pi^{(k)}$ to denote  direct sum of the representation $\pi$ of $G$ on $H^{(k)}$.  So for any vector $\vec{\xi} = (\xi_{1}, ..., \xi_{k})\in H^{(k)}$, we have $\pi^{(k)}(g)\vec{\xi} = (\pi(g)\xi_{1} , ... , \pi(g)\xi_{k}) = \pi(g)\xi_{1}\oplus ... \oplus \pi(g)\xi_{k}$. We will use the following notations:  Let $\pi$ be a projective unitary representation of $G$ on a Hilbert space $H$.

(i)  For any $\xi\in H$, 
$\Theta_{\xi, \pi}: H \rightarrow \ell^{2}(G)$ is the analysis operator for the sequence $\{\pi(g)\xi\}_{G}$. 

(ii)  For $\xi = (\xi_{1}, ... , \xi_{k})\in H^{(k)}$, $\Theta_{\vec{\xi}, \pi}: H \rightarrow (\ell^{2}(G))^{(k)}$ is the analysis operator for the sequence $\cup_{i=1}^{k}\{\pi(g)\xi_{i}\}_{g\in G}$ defined by
$$
\Theta_{\vec{\xi}, \pi}(x) = \Theta_{\xi_{1}, \pi}(x) \oplus ... \oplus  \Theta_{\xi_{k}, \pi}(x).
$$

(iii) For $\xi = (\xi_{1}, ... , \xi_{k})\in H^{(k)}$, $\Theta_{\vec{\xi}, \pi^{(k)}}: H^{(k)} \rightarrow (\ell^{2}(G))^{(k)}$ is the analysis operator for the sequence $\{\pi^{(k)}(g)\vec{\xi}\}_{g\in G}$.

Clearly $\Theta_{\vec{\xi}, \pi}$ can be viewed as the restriction of $ \Theta_{\vec{\xi}, \pi^{(k)}}$ to the subspace $\{x\oplus ... \oplus x: x\in H\}$ of $H^{(k)}$.

\begin{lem}  \label{lem2.1} Let $\pi$ be a projective unitary representation
of a countable group $G$ on a Hilbert space $H$  such that $\pi(G)'$ is finite.  Assume that $\cup_{i=1}^{k}\{\pi(g)\xi_{i}\}_{g\in G}$ is a frame for $H$. If $A\in \pi(G)'$ such that $\xi_{i} = A\eta_{i}$ and each $\eta_{i}$ is a Bessel vector for $\pi$, then  $A$ is invertible and $\cup_{i=1}^{k}\{\pi(g)\eta_{i}\}_{g\in G}$ is also a frame for $H$. 
\end{lem}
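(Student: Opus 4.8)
The plan is to transport both hypotheses onto the analysis operators and then reduce the invertibility of $A$ to direct finiteness of the von Neumann algebra $\pi(G)'$.

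\emph{Step 1 (factoring the analysis operator).} Since $A\in\pi(G)'$ commutes with each $\pi(g)$, for every $x\in H$, every $i$ and every $g$ we have $\langle x,\pi(g)\xi_{i}\rangle=\langle x,\pi(g)A\eta_{i}\rangle=\langle A^{*}x,\pi(g)\eta_{i}\rangle$, so $\Theta_{\xi_{i},\pi}=\Theta_{\eta_{i},\pi}A^{*}$ and hence $\Theta_{\vec{\xi},\pi}=\Theta_{\vec{\eta},\pi}A^{*}$. Because each $\eta_{i}$ is a Bessel vector, $T:=\Theta_{\vec{\eta},\pi}^{*}\Theta_{\vec{\eta},\pi}$ is a bounded positive operator lying in $\pi(G)'$, and the frame operator of $\cup_{i}\{\pi(g)\xi_{i}\}$ is $S=\Theta_{\vec{\xi},\pi}^{*}\Theta_{\vec{\xi},\pi}=ATA^{*}$.

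\emph{Step 2 (invertibility of $AA^{*}$, then of $A$).} The frame hypothesis gives $cI\le S=ATA^{*}\le BI$ for constants $c,B>0$; we may assume $H\neq\{0\}$, so $T\neq 0$ (otherwise all $\xi_{i}=0$). From $ATA^{*}\ge cI$ together with $0\le T\le\|T\|I$ we get $\|T\|\,\|A^{*}x\|^{2}\ge\langle TA^{*}x,A^{*}x\rangle\ge c\|x\|^{2}$ for all $x$, i.e. $AA^{*}\ge(c/\|T\|)I$. Thus $AA^{*}$ is an invertible element of $\pi(G)'$, so $(AA^{*})^{-1}A\in\pi(G)'$ and $((AA^{*})^{-1}A)A^{*}=I$, showing that $A^{*}$ is left invertible in $\pi(G)'$. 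Since $\pi(G)'$ is finite it is directly finite (i.e. $xy=I$ forces $yx=I$), so every left invertible element is invertible; hence $A^{*}$, and therefore $A$, is invertible.

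\emph{Step 3 (the dual frame).} With $A$ invertible, $\Theta_{\vec{\eta},\pi}=\Theta_{\vec{\xi},\pi}(A^{*})^{-1}$ is bounded, and $\Theta_{\vec{\eta},\pi}^{*}\Theta_{\vec{\eta},\pi}=A^{-1}S(A^{-1})^{*}$ satisfies $c\|A^{*}\|^{-2}I\le A^{-1}S(A^{-1})^{*}\le B\|A^{-1}\|^{2}I$, so $\cup_{i}\{\pi(g)\eta_{i}\}$ is a frame for $H$ with these bounds.

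The step I expect to be the crux is the passage in Step~2 from ``$AA^{*}$ invertible'' to ``$A$ invertible'': this fails in $B(H)$ (the unilateral shift $V$ has $V^{*}V=I$ but $V$ not invertible), and it is exactly here that the finiteness of $\pi(G)'$, equivalently its direct finiteness, is indispensable. Everything else is routine operator bookkeeping; the only minor care needed is to dispose of the degenerate case $T=0$ at the outset.
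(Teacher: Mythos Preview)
Your proof is correct and follows essentially the same route as the paper: both arguments use $A\in\pi(G)'$ to rewrite $\langle x,\pi(g)\xi_i\rangle=\langle A^*x,\pi(g)\eta_i\rangle$, combine the lower frame bound for $\vec{\xi}$ with the Bessel bound for $\vec{\eta}$ to deduce that $A^*$ is bounded below, and then invoke finiteness of $\pi(G)'$ to conclude $A$ is invertible. Your packaging via the factorization $S=ATA^*$ and the explicit appeal to direct finiteness is slightly more elaborate than the paper's bare inequality chain, but the content is the same; your Step~3 also spells out the frame conclusion for $\vec{\eta}$, which the paper leaves implicit.
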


\begin{proof} Let $D$ and $C$ be  the lower frame bound and Bessel bound  for $\cup_{i=1}^{k}\{\pi(g)\xi_{i}\}_{g\in G}$  and  $\cup_{i=1}^{k}\{\pi(g)\eta_{i}\}_{g\in G}$, respectively. 
Then  for every $x\in H$ we 
\begin{eqnarray*}
D||x||^{2} &\le& \sum_{i=1}^{k}\sum_{g\in G}|\langle x, \pi(g)\xi_{i}\rangle |^{2}\\
&=& \sum_{i=1}^{k}\sum_{g\in G}|\langle x, \pi(g)A\eta_{i}\rangle |^{2}\\
&=& \sum_{i=1}^{k}\sum_{g\in G}|\langle A^{*}x, \pi(g)\eta_{i}\rangle |^{2}\\
&\le& C||A^{*}x||^{2},
\end{eqnarray*}
This implies that $A^{*}$ is bounded from below. Since  $A^{*}\in \pi(G)'$ and $\pi(G)'$ is a finite von Neumann algebra, it follows that  $A^{*}$ must be invertible. Thus $A$ is invertible.
\end{proof}

\begin{lem} \label{sublem2.2} \cite{GH, GH1} Let $\pi$ be a projective unitary
representations of a countable group $G$ on a Hilbert space $H$. If $x\in\mathcal{B}_{\pi}$, then there exists a vector $\xi \in M := \overline{span}\{\pi(g)x: g\in G\}$ such that $\{\pi(g)\xi\}_{g\in G}$ is a Parseval frame for $M$. Moreover, $\Theta_{\xi}(H) = \overline{\Theta_{x}(H)}$.
\end{lem}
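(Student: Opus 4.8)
The plan is to identify $M$ with a $\lambda$-invariant subspace of $\ell^2(G)$ via the analysis operator $\Theta_x$, and then transport the canonical Parseval generator $\chi_e$ of the left regular representation back to $M$.

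The first step is the intertwining relation $\Theta_x\pi(g) = \lambda_g\Theta_x$ for all $g\in G$, where $\lambda$ is the left regular $\mu$-projective representation on $\ell^2(G)$. This is a short computation: writing $\Theta_x(\pi(g)y) = \sum_{h\in G}\langle\pi(g)y,\pi(h)x\rangle\chi_h$, reindexing $h = gk$, and expanding $\langle\pi(g)y,\pi(gk)x\rangle$ with the $2$-cocycle identity for $\mu$, one checks that the accumulated scalar factor is precisely $\mu(g,k)$, which is exactly the factor occurring in $\lambda_g\chi_k = \mu(g,k)\chi_{gk}$, so no spurious constants survive. Since $\langle y,\pi(g)x\rangle = 0$ for all $g$ exactly when $y\perp M$, we also get $\ker\Theta_x = M^\perp$, hence the closure of the range of $\Theta_x^*$ equals $M$.

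The second step is to take the polar decomposition $\Theta_x = V\,|\Theta_x|$ with $|\Theta_x| = (\Theta_x^*\Theta_x)^{1/2}$. Combining $\Theta_x\pi(g) = \lambda_g\Theta_x$ with the relation $\pi(g)\Theta_x^* = \Theta_x^*\lambda_g$ obtained from it shows that $\Theta_x^*\Theta_x$ commutes with every $\pi(g)$, hence so does $|\Theta_x|$; therefore the partial isometry $V$ has initial space $M$ and final space $N := \overline{\Theta_x(H)}$, and it intertwines $\pi|_M$ with $\lambda|_N$, i.e.\ $V\pi(g) = \lambda_g V$ and, equivalently, $\pi(g)V^* = V^*\lambda_g$. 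In particular $N$ is $\lambda$-invariant, so the orthogonal projection $P_N$ onto $N$ lies in $\lambda(G)'$. Now $\{\lambda_g\chi_e\}_{g\in G} = \{\chi_g\}_{g\in G}$ is an orthonormal basis of $\ell^2(G)$, so $\eta := P_N\chi_e$ satisfies $\{\lambda_g\eta\}_{g\in G} = \{P_N\chi_g\}_{g\in G}$, which is a Parseval frame for $N$, being the image of an orthonormal basis under an orthogonal projection. Setting $\xi := V^*\eta\in M$, we have $\pi(g)\xi = V^*\lambda_g\eta$, so $\{\pi(g)\xi\}_{g\in G}$ is the image of a Parseval frame for $N$ under $V^*$, which restricts to a unitary from $N$ onto $M$; hence $\{\pi(g)\xi\}_{g\in G}$ is a Parseval frame for $M$.

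For the range identity, I would compute directly: for $y\in M$, using $\lambda_g P_N\chi_e = P_N\chi_g$ and $Vy\in N$ one finds $\langle y,\pi(g)\xi\rangle = \langle Vy,\chi_g\rangle$, so $\Theta_\xi y = Vy$; since $\Theta_\xi$ vanishes on $M^\perp$, this yields $\Theta_\xi(H) = V(M) = N = \overline{\Theta_x(H)}$. The point that makes the polar decomposition indispensable — and which I expect to be the only genuinely delicate part — is that $x$ is merely a Bessel vector and is not assumed to be a frame vector for $M$, so $\Theta_x^*\Theta_x$ need not be boundedly invertible on $M$ and one cannot simply normalize by $(\Theta_x^*\Theta_x)^{-1/2}$; once this is bypassed, the remainder is the standard ``an orthogonal projection of an orthonormal basis is a Parseval frame'' device.
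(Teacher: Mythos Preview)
Your argument is correct and is essentially the same approach the paper takes: the paper cites this lemma from \cite{GH, GH1} without proof, but its sketch proof of the multi-vector generalization (Lemma~\ref{lem2.3}) proceeds exactly as you do---polar decomposition $\Theta_x = V|\Theta_x|$, the intertwining $V\pi(g)=\lambda_g V$, and then $\xi = V^*\chi_e$ (your $V^*P_N\chi_e$ equals $V^*\chi_e$ since $V^*$ already has initial space $N$), with the range identity read off from $\Theta_\xi = V$. The only cosmetic difference is that the paper reduces at the outset to $M=H$, making $V$ an isometry rather than a partial isometry.
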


\begin{lem} \label{lem2.2}  Assume that $\pi$ and $\sigma$ is a commutant pair of projective
representations of a countable group $G$ on a Hilbert space $H$ and $\pi(G)'$ is finite. If $\cup_{i=1}^{k}\{\pi(g)\xi_{i}\}_{g\in G}$ is a frame for $H$, then 
$$
\{\sigma(g)\xi_{1}\oplus ... \oplus \sigma(g)\xi_{k}\}_{g\in G}
$$
is frame sequence in $H^{(k)}$. 

\end{lem}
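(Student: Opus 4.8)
The statement to prove is that $\vec{\xi}=(\xi_1,\dots,\xi_k)$ is a frame-sequence vector for the amplified representation $\sigma^{(k)}$ on $H^{(k)}$, i.e.\ that the single orbit $\{\sigma^{(k)}(g)\vec{\xi}\}_{g\in G}$ is a frame for its closed linear span $M\subseteq H^{(k)}$. The plan is to normalize to a Parseval multi-frame, compute the Gram operator of this orbit on $\ell^2(G)$, and identify it as a scalar multiple of the identity by means of the single-generator duality Theorem \ref{thm-main1}.

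\textbf{Step 1 (reduction to a Parseval multi-frame).} Let $S=\sum_{i=1}^{k}\Theta_{\xi_i,\pi}^{*}\Theta_{\xi_i,\pi}$ be the frame operator of $\cup_{i=1}^{k}\{\pi(g)\xi_i\}$. Each summand lies in $\pi(G)'$, so $S,S^{\pm1/2}\in\pi(G)'=\sigma(G)''$; in particular $S^{1/2}$ commutes with every $\sigma(g)$. With $\eta_i:=S^{-1/2}\xi_i$ the system $\cup_i\{\pi(g)\eta_i\}$ is a Parseval frame for $H$, and $\sigma^{(k)}(g)\vec{\xi}=(S^{1/2}\oplus\cdots\oplus S^{1/2})\,\sigma^{(k)}(g)\vec{\eta}$ for all $g$. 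Since $S^{1/2}\oplus\cdots\oplus S^{1/2}$ is bounded and invertible on $H^{(k)}$, and bounded invertible operators send frame sequences to frame sequences (carrying $\overline{span}\{\sigma^{(k)}(g)\vec{\eta}\}$ onto $M$), it suffices to prove the statement when $\cup_i\{\pi(g)\xi_i\}$ is itself a Parseval multi-frame for $H$; I assume this henceforth.

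\textbf{Step 2 (the Gram operator).} Let $\Theta:=\Theta_{\vec{\xi},\sigma^{(k)}}\colon H^{(k)}\to\ell^2(G)$ be the analysis operator of $\{\sigma^{(k)}(g)\vec{\xi}\}$, bounded because each $\xi_i\in\mathcal B_\pi=\mathcal B_\sigma$. The orbit is a frame sequence exactly when $\mathcal G:=\Theta\Theta^{*}$ has closed range on $\ell^2(G)$. Using unitarity of $\sigma$ and the cocycle relation $\sigma(g)^{-1}\sigma(h)=\nu(g,h)\,\sigma(g^{-1}h)$, where $\nu(g,h)$ is unimodular and $\nu(g,g)=1$ (because $\sigma(g)^{-1}\sigma(g)=\sigma(e)=I$), one finds
\[
\mathcal G_{g,h}=\langle\sigma^{(k)}(h)\vec{\xi},\sigma^{(k)}(g)\vec{\xi}\rangle=\sum_{i=1}^{k}\langle\sigma(g)^{-1}\sigma(h)\xi_i,\xi_i\rangle=\nu(g,h)\,\tau\!\left(\sigma(g^{-1}h)\right),
\]
where $\tau(A):=\sum_{i=1}^{k}\langle A\xi_i,\xi_i\rangle$ is, by Lemma \ref{lem-2.2}, a faithful normal trace on $\pi(G)'=\sigma(G)''$, finite since $\pi(G)'$ is finite, with $\tau(I)=\sum_i\|\xi_i\|^2$.

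\textbf{Step 3 (the crux, and the expected obstacle).} I claim $\tau(\sigma(g))=0$ for every $g\neq e$. Granting this, $\mathcal G_{g,h}=\tau(I)\,\delta_{g,h}$, i.e.\ $\mathcal G=\tau(I)\,I_{\ell^2(G)}$; thus $\{\sigma^{(k)}(g)\vec{\xi}\}$ is an orthogonal system of vectors of common norm $\sqrt{\tau(I)}$ — hence a tight frame (a rescaled orthonormal basis) for its span, and a Riesz sequence, so a fortiori a frame sequence — and the lemma follows. To prove the claim, note that (in the setting where the lemma is applied) $\pi$ is a frame representation, so it admits a complete Parseval frame vector $\varphi$ (Lemma \ref{prop-sub}); by the independence clause of Lemma \ref{lem-2.2}, $\tau(\cdot)=\langle\,\cdot\,\varphi,\varphi\rangle$. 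Since $(\pi,\sigma)$ is a dual commutant pair and $\varphi$ is a complete Parseval (tight) frame vector for $\pi$, Theorem \ref{thm-main1} shows $\{\sigma(g)\varphi\}_{g\in G}$ is an orthogonal system, whence $\tau(\sigma(g))=\langle\sigma(g)\varphi,\sigma(e)\varphi\rangle=0$ for $g\neq e$. I expect this vanishing to be the main obstacle: it is the one place where the full duality Theorem \ref{thm-main1}, together with the identification of the multi-frame trace $\tau$ with the canonical trace attached to a Parseval frame vector of $\pi$, is indispensable. (Under the weaker hypothesis of a mere commutant pair with $\pi(G)'$ finite, $\tau(\sigma(g))$ need not vanish, $\mathcal G$ becomes a genuine twisted-convolution operator on $\ell^2(G)$, and its closed range would instead have to be wrung out of the lower frame bound of $\cup_i\{\pi(g)\xi_i\}$ by transporting that bound across the inclusion $\sigma(G)\subseteq\pi(G)'$ and using the finiteness of $\pi(G)'$, in the spirit of Lemma \ref{lem2.1}.)
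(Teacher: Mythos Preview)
Your argument is clean where it works, but it proves a strictly weaker statement than the lemma: in Step~3 you invoke a complete Parseval frame vector $\varphi$ for $\pi$ and then Theorem~\ref{thm-main1}, which together require $\pi$ to be a frame representation and $(\pi,\sigma)$ to be a \emph{dual} commutant pair. The lemma as stated assumes only a commutant pair with $\pi(G)'$ finite, together with a multi-frame for $\pi$ (which does not by itself furnish a single complete Parseval frame vector). You flag this in your closing parenthetical, but you do not close the gap; the sketch you offer there is in fact precisely what the paper carries out.

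The paper's proof avoids Theorem~\ref{thm-main1} entirely and uses only the stated hypotheses. Rather than computing the Gram operator, it applies Lemma~\ref{sublem2.2} to the Bessel vector $\vec{\xi}$ for $\sigma^{(k)}$ to obtain some Parseval frame vector $\vec{\eta}\in M=[\sigma^{(k)}(G)\vec{\xi}\,]$ for $M$, and then uses the parametrization Lemma~\ref{param-lem} to write $\vec{\xi}=T\vec{\eta}$ with $T\in w^{*}(\sigma^{(k)}(G)|_{M})$. Since that algebra consists of restricted diagonals, $T=(A\oplus\cdots\oplus A)|_{M}$ for some $A\in w^{*}(\sigma(G))=\pi(G)'$, so $A\eta_i=\xi_i$. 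Now Lemma~\ref{lem2.1}---exactly the device you point to---combines the lower frame bound of $\cup_i\{\pi(g)\xi_i\}$ with the finiteness of $\pi(G)'$ to force $A$ invertible; hence $T$ is invertible and, by Lemma~\ref{param-lem} again, $\{\sigma^{(k)}(g)\vec{\xi}\}$ is a frame for $M$.

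Your route does buy something extra when its stronger hypotheses are available: after the Parseval normalization of Step~1 you show that the $\sigma^{(k)}$-orbit is in fact an \emph{orthogonal} sequence with Gram operator $\tau(I)\,I$, not merely a frame sequence. But as a proof of Lemma~\ref{lem2.2} at its stated level of generality, it is incomplete.
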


\begin{proof} Let $$M = \overline{span} \{\sigma(g)\xi_{1}\oplus ... \oplus \sigma(g)\xi_{k}\}_{g\in G}.$$ Then $M$ is $\sigma^{(k)}$-invariant. Note that 
$$
w^{*}(\sigma^{(k)}(G)) = \{A\oplus ... \oplus A: A \in w^{*}(\sigma(G))\}.
$$
So we have that 
$$
w^{*}(\sigma^{(k)}(G)|_{M}) = \{A^{(k)}|_{M}: A \in w^{*}(\sigma(G))\}.
$$
Since $\vec{\xi} =(\xi_{1}, ..., \xi_{k})$ is a Bessel vector for $\sigma^{(k)}$, by Lemma \ref{sublem2.2} we get that there exists a vector $\vec{\eta} = (\eta_{1}, ... , \eta_{k})\in M$ such that
$$
\{\sigma^{(k)}(g)\vec{\eta}\}_{g\in G}
$$
is a Parseval frame for $M$. Now by Lemma \ref{param-lem}  there exists an operator  $T$ in $w^{*}(\sigma^{(k)}(G)|_{M})$ such that 
$T\vec{\eta} = \vec{\xi}$. Write $T = A^{(k)}|_{M}$ for some $A\in w^{*}(\sigma(G))$. Then we  get that $A\eta_{i} = \xi_{i}$ for $i=1, ..., k$ and $A\in \pi(G)'$. 
Thus, by Lemma \ref{lem2.1}, we have that $A$ is invertible, which implies that $T$ is invertible. Hence, by Lemma \ref{param-lem} again, $\{\sigma^{(k)}(g)\vec{\xi}\}_{g\in G}$ is a frame for $M$, which completes the proof.
\end{proof}

We also need the following generalization of Lemma \ref{sublem2.2}. Although it is not a consequence of \ref{sublem2.2}, the proof is very similar and  we  include a sketch proof for reader's convenience.

\begin{lem} \label{lem2.3} {\it Assume that $\pi$ is a projective unitary
representation of a countable group $G$ on a Hilbert space $H$. Suppose that $\cup_{i=1}^{k}\{\pi(g)\xi_{i}\}_{g\in G}$ is a Bessel sequence and let 
$$M = \overline{span} \cup_{i=1}^{k}\{\pi(g)\xi_{i}\}_{g\in G}.$$  Then there exists a vector $\vec{\eta}$ such that

(i) $\cup_{i=1}^{k}\{\pi(g)\eta_{i}\}_{g\in G}$ is a Parseval frame for $M$, and 

(ii) $
\Theta_{\vec{\eta}, \pi}(H) = [\Theta_{\vec{\xi}, \pi}(H)].
$
}
\end{lem}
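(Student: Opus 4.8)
The plan is to reduce everything to the single-generator statement Lemma~\ref{sublem2.2} by running its proof with $\ell^2(G)$ replaced by $(\ell^2(G))^{(k)}$ and the analysis operator of one vector replaced by the joint analysis operator $\Theta_{\vec{\xi},\pi}$. First I would set up the operator-theoretic picture: since $\cup_{i=1}^k\{\pi(g)\xi_i\}_{g\in G}$ is Bessel, $\Theta_{\vec{\xi},\pi}\colon H\to(\ell^2(G))^{(k)}$ is bounded, and $\ker\Theta_{\vec{\xi},\pi}=M^\perp$, so the positive operator $S:=\Theta_{\vec{\xi},\pi}^{\,*}\Theta_{\vec{\xi},\pi}=\sum_{i=1}^k\Theta_{\xi_i,\pi}^{\,*}\Theta_{\xi_i,\pi}$ lies in $\pi(G)'$, with $\overline{\mathrm{ran}\,S}=M$ and range projection $P_M\in\pi(G)'$. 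Each $\Theta_{\xi_i,\pi}$ intertwines $\pi$ with a regular projective representation of $G$ on $\ell^2(G)$ whose cyclic vector $\chi_e$ generates the standard orthonormal basis (up to the routine $2$-cocycle bookkeeping), hence $\Theta_{\vec{\xi},\pi}$ intertwines $\pi$ with the $k$-fold amplification $\lambda^{(k)}$ on $(\ell^2(G))^{(k)}$; consequently $K:=[\Theta_{\vec{\xi},\pi}(H)]$ is $\lambda^{(k)}$-reducing, and the polar decomposition $\Theta_{\vec{\xi},\pi}=WS^{1/2}$ yields a partial isometry $W$ with initial space $M$ and final space $K$ which, since $S^{1/2}\in\pi(G)'$, still intertwines $\pi|_M$ with $\lambda^{(k)}|_K$; thus $W$ restricts to a unitary $M\to K$ conjugating the two representations.

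Next I would construct the desired vector. Let $\chi_e^{(i)}\in(\ell^2(G))^{(k)}$ be $\chi_e$ placed in the $i$-th copy; then $\{\lambda^{(k)}(g)\chi_e^{(i)}:g\in G,\,1\le i\le k\}$ is the standard orthonormal basis of $(\ell^2(G))^{(k)}$, so its image under $P_K\in\lambda^{(k)}(G)'$, namely $\{\lambda^{(k)}(g)P_K\chi_e^{(i)}\}$, is a Parseval frame for $K$. Put $\eta_i:=W^{*}P_K\chi_e^{(i)}\in M$. Using $W^{*}\lambda^{(k)}(g)=\pi(g)W^{*}$ on $K$, the family $\cup_{i=1}^k\{\pi(g)\eta_i\}_{g\in G}$ is the image of that Parseval frame under the unitary $W^{*}|_K\colon K\to M$, hence a Parseval frame for $M$; this is (i), and in particular $M=\overline{\mathrm{span}}\,\cup_{i=1}^k\{\pi(g)\eta_i\}_{g\in G}$. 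For (ii): because the $\eta_i$ generate a Parseval frame for $M$, $\Theta_{\vec{\eta},\pi}|_M$ is an isometry with closed range intertwining $\pi$ with $\lambda^{(k)}$, so it is enough to verify $\Theta_{\vec{\eta},\pi}(\eta_j)=W(\eta_j)=P_K\chi_e^{(j)}$ for every $j$ and then extend by the intertwining relation; this last identity is a short computation, since $\langle\eta_j,\pi(g)\eta_i\rangle=\langle W^{*}P_K\chi_e^{(j)},\pi(g)W^{*}P_K\chi_e^{(i)}\rangle=\langle P_K\chi_e^{(j)},\lambda^{(k)}(g)\chi_e^{(i)}\rangle$, which is exactly the $(i,g)$-coordinate of $P_K\chi_e^{(j)}$, giving $\Theta_{\vec{\eta},\pi}(\eta_j)=P_K\chi_e^{(j)}$. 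Therefore $\Theta_{\vec{\eta},\pi}(H)=W(M)=K=[\Theta_{\vec{\xi},\pi}(H)]$. As in Lemma~\ref{sublem2.2}, no finiteness hypothesis on $\pi(G)'$ is needed here.

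The only point that goes beyond a verbatim repetition of Lemma~\ref{sublem2.2}, and the step I would be most careful about, is the interaction between the direct-sum structure and the polar decomposition: one must take the Parseval frame for $K$ in the coordinatewise form $\{\lambda^{(k)}(g)P_K\chi_e^{(i)}\}$ — rather than an arbitrary single-generator Parseval frame for $K$ — precisely so that its pullback through $W^{*}$ has the required shape $\cup_{i=1}^k\{\pi(g)\eta_i\}_{g\in G}$, and one must check that $\Theta_{\vec{\eta},\pi}$, an amplified coordinatewise analysis operator, is indeed the relevant intertwiner for $\lambda^{(k)}$. Everything else is the single-vector argument applied on $(\ell^2(G))^{(k)}$.
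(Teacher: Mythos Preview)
Your proposal is correct and follows essentially the same route as the paper: take the polar decomposition of $\Theta_{\vec{\xi},\pi}$, use the intertwining relation $\Theta_{\vec{\xi},\pi}\pi(g)=\lambda^{(k)}(g)\Theta_{\vec{\xi},\pi}$ to see that the isometric part carries $\pi|_M$ to $\lambda^{(k)}|_K$, and define $\eta_i$ as the pullback of the $i$-th coordinate vector $\chi_e^{(i)}$. The paper first reduces to $M=H$ so that the partial isometry becomes a genuine isometry, whereas you keep $M$ general and track the initial/final spaces; this is a cosmetic difference (and note that $W^{*}P_K=W^{*}$, so your $P_K$ in the definition of $\eta_i$ is harmless but redundant).
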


\begin{proof} It is sufficient to consider the case when $M = H$.  Write $T = \Theta_{\vec{\xi}, \pi}$ and let $T= U|T|$ be its polar decomposition. Then $U$ is an isometry from $H$ into $\ell^{2}(G)^{(k)}$ since the range of $T^*$ is dense in $H$.  It can be easily verified that $T\pi(g) = \lambda^{(k)}(g)T$ for every $g\in G$, where $\lambda$ is the left regular representation for $G$ with the same multiplier as $\pi$. This implies that $U\pi(g) = \lambda^{(k)}(g)U$ for all $g\in G$. Let $\psi_{i} = 0\oplus ... \oplus 0 \oplus \chi_{e}\oplus 0 ... \oplus 0$, where $\chi_{e}$ appears in the $i$-th component, and let $\eta_{i} = U^{*}\psi_{i}$. Then we have
$$
U\pi(g)\eta_{i} = U\pi(g)U^{*}\psi_{i} = UU^{*}\lambda^{(k)}(g)\psi = P\lambda^{(k)}(g)\psi_{i}, 
$$
where $P$ is the orthogonal projection from $\ell^{2}(G)^{(k)}$ onto $[\Theta_{\vec{\xi}, \pi}(H)]$. Since $$\{\lambda^{(k)}(g)\psi_{i}: g\in G, i =1, ... , k\}$$ is an orthonormal basis for $\ell^{2}(G)^{(k)}$, we get that $\{U\pi(g)\eta_{i}: g\in G, i =1, ..., k\}$ is a Parserval frame for $[\Theta_{\vec{\xi}, \pi}(H)]$ and the range space of its analysis operator is $[\Theta_{\vec{\xi}, \pi}(H)]$. Since $U$ is an isometry, we obtain that 
$\cup_{i=1}^{k}\{\pi(g)\eta_{i}\}_{g\in G}$ is a Parseval frame for $H$  and $\Theta_{\vec{\eta}, \pi}(H) = [\Theta_{\vec{\xi}, \pi}(H)]$. 
\end{proof}

Let $\pi$ be a projective unitary representation of $G$ on a Hilbert space $H$  such that $\mathcal{B}_{\pi}$ is dense in $H$. Recall from \cite{DHL-JFA} that two vectors $\xi$ and $\eta$ in $H$ are called {\it $\pi$-orthogonal} if $range(\Theta_{\xi}) \perp range(\Theta_{\eta})$, and {\it $\pi$-weakly equivalent} if $[range(\Theta_{\xi})] = [range(\Theta_{\eta}]$.

The following result obtained in \cite{HL_BLM} characterizes the
$\pi$-orthogonality and $\pi$-weakly equivalence in terms of the
commutant of $\pi(G)$.

\begin{lem}\label{sublem2.4}  Let $\pi$ be a projective representation of a
countable group $G$ on a Hilbert space $H$ such that
$\mathcal{B}_{\pi}$ is dense in $H$. Then two vectors $\xi, \eta\in H$ are

(i) $\pi$-orthogonal if and only if
$[\pi(G)'\xi] \perp [\pi(G)'\eta]$, and 

(ii)  $\pi$-weakly equivalent  if and only if
$[\pi(G)'\xi] = [\pi(G)'\eta]$.
\end{lem}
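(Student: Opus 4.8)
The plan is to prove Lemma \ref{sublem2.4} by reducing everything to the generating property of $\pi(G)'$ established in Lemma \ref{lem-2.1}, namely that $\pi(G)' = \overline{\mathrm{span}}^{WOT}\{\Theta_\eta^*\Theta_\xi : \xi,\eta\in\mathcal B_\pi\}$. First I would record the basic intertwining identity $\Theta_\xi \pi(g) = \lambda(g)\Theta_\xi$ for every Bessel vector $\xi$, where $\lambda$ is the left regular $\mu$-projective representation; this is the same computation used in the proof of Lemma \ref{lem2.3}. It gives two consequences that are the real engine of the argument: (a) $\Theta_\xi^*$ intertwines $\lambda$ with $\pi$, hence $\Theta_\xi^*\Theta_\eta \in \pi(G)'$ whenever $\xi,\eta$ are Bessel; and (b) the range $\mathrm{range}(\Theta_\xi)$, and therefore its closure $[\mathrm{range}(\Theta_\xi)]$, is $\lambda(G)$-invariant, so the orthogonal projection $P_\xi$ of $\ell^2(G)$ onto $[\mathrm{range}(\Theta_\xi)]$ lies in $\lambda(G)'$.

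For part (i), suppose first that $\xi,\eta$ are $\pi$-orthogonal, i.e. $\mathrm{range}(\Theta_\xi)\perp\mathrm{range}(\Theta_\eta)$. Then $\Theta_\eta^*\Theta_\xi = 0$. I want to conclude $[\pi(G)'\xi]\perp[\pi(G)'\eta]$, equivalently $\langle A\xi, B\eta\rangle = 0$ for all $A,B\in\pi(G)'$, i.e. $B^*A\xi\perp\eta$ for all $A,B\in\pi(G)'$; since $\pi(G)'$ is a $*$-algebra it suffices to show $\langle C\xi,\eta\rangle = 0$ for all $C\in\pi(G)'$. By Lemma \ref{lem-2.1} and WOT-continuity of $C\mapsto\langle C\xi,\eta\rangle$ (note $\xi,\eta\in H$ are fixed vectors), it is enough to check this for $C = \Theta_\zeta^*\Theta_\theta$ with $\zeta,\theta\in\mathcal B_\pi$. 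Here one computes $\langle \Theta_\zeta^*\Theta_\theta\xi,\eta\rangle = \langle\Theta_\theta\xi,\Theta_\zeta\eta\rangle = \sum_{g}\langle\xi,\pi(g)\theta\rangle\overline{\langle\eta,\pi(g)\zeta\rangle}$. The idea is to rewrite this, using the intertwining relation, as an inner product between something in $\mathrm{range}(\Theta_\xi)$ and something in $\mathrm{range}(\Theta_\eta)$ — concretely $\langle\Theta_\theta\xi,\Theta_\zeta\eta\rangle = \langle\Theta_\xi^*\theta\text{-type expression}\rangle$; more cleanly, $\Theta_\theta\xi$ and $\Theta_\zeta\eta$ need to be related back to $\Theta_\xi$ and $\Theta_\eta$ applied to Bessel vectors, which one does via $\langle\Theta_\theta\xi,\chi_g\rangle = \langle\xi,\pi(g)\theta\rangle = \overline{\langle\pi(g)\theta,\xi\rangle} = \overline{\langle\Theta_\xi\theta,\chi_{\cdot}\rangle}$ after a change of variable $g\mapsto g^{-1}$ using the cocycle identities; this exhibits $\Theta_\theta\xi$ as (the image under an isometry implementing $g\mapsto g^{-1}$ of) $\Theta_\xi\theta\in\mathrm{range}(\Theta_\xi)$, and similarly for $\eta$, so the inner product vanishes. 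Conversely, if $[\pi(G)'\xi]\perp[\pi(G)'\eta]$, then taking $C=\Theta_\eta^*\Theta_\xi\in\pi(G)'$ we get $\|\Theta_\eta^*\Theta_\xi\,\xi'\|$-type estimates; more directly $\langle\Theta_\eta^*\Theta_\xi\,\xi,\eta\rangle = \|\Theta_\eta^*\Theta_\xi\|$-related... the clean route is: $\Theta_\xi^*\Theta_\xi\in\pi(G)'$ so $\Theta_\xi^*\Theta_\xi\,\xi\in[\pi(G)'\xi]\perp\eta$, hence $0 = \langle\Theta_\xi^*\Theta_\xi\xi,\Theta_\eta^*\Theta_\eta\eta\rangle$; iterating/using density one pushes this to $\Theta_\eta\Theta_\xi^* = 0$ on the relevant subspaces, giving $\mathrm{range}(\Theta_\xi)\perp\mathrm{range}(\Theta_\eta)$. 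I would streamline this direction by instead arguing: $P_\xi\in\lambda(G)'$ and $\Theta_\xi^*P_\xi = \Theta_\xi^*$, and the hypothesis forces $\Theta_\eta^*\Theta_\xi = 0$ after testing against Bessel vectors as above, which is exactly $\pi$-orthogonality.

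For part (ii), I would deduce it from (i) by a complementation argument, or prove it directly in parallel. The direct approach: $\pi$-weak equivalence means $[\mathrm{range}(\Theta_\xi)] = [\mathrm{range}(\Theta_\eta)] =: K$, with associated projection $P\in\lambda(G)'$. One shows $[\pi(G)'\xi]$ is unitarily carried onto... rather, observe that for any Bessel $\theta$, whether $\Theta_\xi\theta$ lies in $[\mathrm{range}(\Theta_\eta)]$ is detected by $\langle\Theta_\xi\theta, \zeta'\rangle$ for $\zeta'$ in the orthocomplement, and by the computation in part (i) this translates into a statement about $\langle\theta, \pi(G)'\eta\rangle$ versus $\langle\theta,\pi(G)'\xi\rangle$. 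Concretely: $[\pi(G)'\xi] = \{x : \Theta_{\xi'}x = 0 \text{ for all Bessel }\xi' \pi\text{-orthogonal to }\xi\}^\perp$-type characterization, combined with Lemma \ref{lem-2.1}, should yield that $[\pi(G)'\xi]$ depends only on the subspace $[\mathrm{range}(\Theta_\xi)]$. I expect the main obstacle to be making precise the passage ``$\Theta_\xi^*\Theta_\eta = 0 \iff [\pi(G)'\xi]\perp[\pi(G)'\eta]$'' rigorously in both directions — specifically, handling the change of variables $g\mapsto g^{-1}$ together with the multiplier $\mu$ correctly (this is where the identities (i)–(iii) for $\mu$ in the introduction are used), and justifying the interchange of WOT-limits with the fixed vector pairing $\langle\cdot\,\xi,\eta\rangle$. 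Once the identification $\Theta_\zeta^*\Theta_\theta\,\xi\leftrightarrow$ (range of $\Theta_\xi$) is set up cleanly, both parts follow from Lemma \ref{lem-2.1} essentially formally, and (ii) can be obtained from (i) applied inside the reducing subspace decomposition of $\ell^2(G)$ under $\lambda(G)$.
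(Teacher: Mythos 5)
You should first note that the paper does not actually prove this lemma: it is imported verbatim from \cite{HL_BLM}, so your argument has to stand on its own. For part (i) it essentially does, and it is the standard argument: the cocycle computation gives $\langle \Theta_{\xi}x,\chi_{g}\rangle=\mu(g,g^{-1})\overline{\langle \Theta_{x}\xi,\chi_{g^{-1}}\rangle}$, i.e. $\Theta_{\xi}x=J\Theta_{x}\xi$ for a conjugate-linear isometric involution $J$ of $\ell^{2}(G)$; hence $\langle \Theta_{\zeta}^{*}\Theta_{\theta}\xi,\eta\rangle=\langle \Theta_{\theta}\xi,\Theta_{\zeta}\eta\rangle=\langle \Theta_{\eta}\zeta,\Theta_{\xi}\theta\rangle$ for $\theta,\zeta\in\mathcal{B}_{\pi}$, and together with Lemma \ref{lem-2.1} and the WOT-continuity of $C\mapsto\langle C\xi,\eta\rangle$ this gives both implications in (i), exactly as you outline (with the minor caveat, worth one sentence, that for non-Bessel $\xi$ the pairing only reaches $\Theta_{\xi}(\mathcal{B}_{\pi})$, so $range(\Theta_{\xi})$ should be interpreted through the restriction to $\mathcal{B}_{\pi}$, which is how the lemma is used later in the paper).

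The genuine gap is part (ii), where you never produce an argument. The implication $[\Theta_{\xi}(H)]=[\Theta_{\eta}(H)]\Rightarrow[\pi(G)'\xi]=[\pi(G)'\eta]$ does follow from the same pairing (it is essentially Lemma \ref{lem2.4} of the paper with $k=1$). But the converse cannot be ``detected by pairing against the orthocomplement of $[\Theta_{\eta}(H)]$'': a vector of $\ell^{2}(G)$ orthogonal to $[\Theta_{\eta}(H)]$ is in general not in the range of any analysis operator, so it cannot be converted into a statement about $\pi(G)'$ via Lemma \ref{lem-2.1}; likewise the orthocomplement of $[\pi(G)'\xi]$ need not be of the form $[\pi(G)'w]$, so neither ``complementation'' nor ``applying (i) inside a reducing decomposition of $\ell^{2}(G)$'' works as stated. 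The missing ingredient is concrete: for $A\in\pi(G)'$ one has $\Theta_{A\eta}=\Theta_{\eta}A^{*}$, and for fixed $x\in\mathcal{B}_{\pi}$ the map $\xi\mapsto\Theta_{\xi}x=J\Theta_{x}\xi$ is norm-continuous in $\xi$ (bounded by the Bessel bound of $x$). Hence if $[\pi(G)'\xi]\subseteq[\pi(G)'\eta]$, approximating $\xi$ in norm by finite sums $\sum A_{i}\eta$ gives $\Theta_{\xi}x=\lim\Theta_{\eta}\bigl((\sum A_{i})^{*}x\bigr)\in[\Theta_{\eta}(H)]$, so $[\Theta_{\xi}(H)]\subseteq[\Theta_{\eta}(H)]$, and equality follows by symmetry. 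Without this step (or an equivalent one, e.g. via the parametrization in Lemma \ref{param-lem} applied to a Parseval frame vector of the relevant subrepresentation), part (ii) of your proposal remains unproved.
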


We need the following (partial) generalization of Lemma \ref{sublem2.4} (ii).

\begin{lem}\label{lem2.4} {\it Let $(\pi, \sigma)$ be a commutant pair of projective  unitary representations of a
countable group $G$ on a Hilbert space $H$ such that $\mathcal{B}_{\pi}$ is dense in $H$. 
 Let $\xi_{i}, \eta_{i}\in H$ $(i =1, .. , k)$ be Bessel vectors for $\pi$. If $[\Theta_{\vec{\xi}, \pi}(H)] = [\Theta_{\vec{\eta}, \pi}(H)]$, then  $[\sigma^{(k)}(G)\vec{\xi}\ ] = [\sigma^{(k)}(G)\vec{\eta}\ ]$.
}
\end{lem}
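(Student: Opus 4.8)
The plan is to reduce to Parseval multi-frames via Lemma~\ref{lem2.3} and then transport the equality of closed subspaces across the pair $(\pi,\sigma)$ inside the von Neumann algebra $N:=w^{*}(\sigma^{(k)}(G))=\{A^{(k)}:A\in\pi(G)'\}$, the last identity holding because $w^{*}(\sigma(G))=\sigma(G)''=\pi(G)'$. What makes this description useful is that $[\sigma^{(k)}(G)\vec{v}\,]=\overline{N\vec{v}}$ for every $\vec{v}\in H^{(k)}$, since the linear span of $\sigma^{(k)}(G)$ is WOT-dense in $N$ and weak closures of convex sets coincide with norm closures. Hence it suffices to produce Parseval versions $\vec{p}$ of $\vec{\xi}$ and $\vec{q}$ of $\vec{\eta}$ and to establish the chain $\overline{N\vec{\xi}}=\overline{N\vec{p}}=\overline{N\vec{q}}=\overline{N\vec{\eta}}$.

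First I would apply Lemma~\ref{lem2.3} to $\vec{\xi}$ and to $\vec{\eta}$: pick $\vec{p}=(p_{1},\dots,p_{k})$ so that $\cup_{i}\{\pi(g)p_{i}:g\in G\}$ is a Parseval frame for $M_{\xi}:=[\cup_{i}\{\pi(g)\xi_{i}:g\in G\}]$ and $\Theta_{\vec{p},\pi}(H)=[\Theta_{\vec{\xi},\pi}(H)]$, and similarly $\vec{q}$ relative to $M_{\eta}:=[\cup_{i}\{\pi(g)\eta_{i}:g\in G\}]$, with $\Theta_{\vec{q},\pi}(H)=[\Theta_{\vec{\eta},\pi}(H)]$. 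Since $p_{i}\in M_{\xi}$ and $M_{\xi}$ is $\pi$-invariant, $\Theta_{\vec{p},\pi}$ is a partial isometry with initial space $M_{\xi}$ and, using Lemma~\ref{lem2.3} together with the hypothesis $[\Theta_{\vec{\xi},\pi}(H)]=[\Theta_{\vec{\eta},\pi}(H)]$, with final space $K:=[\Theta_{\vec{\xi},\pi}(H)]$; likewise $\Theta_{\vec{q},\pi}$ is a partial isometry with initial space $M_{\eta}$ and final space $K$. I will use the elementary identities $\Theta_{\vec{p},\pi}^{*}\psi_{i}=p_{i}$, $\Theta_{\vec{q},\pi}^{*}\psi_{i}=q_{i}$ (where $\psi_{i}\in\ell^{2}(G)^{(k)}$ equals $\chi_{e}$ in the $i$-th coordinate and $0$ elsewhere), $\Theta_{\vec{p},\pi}\Theta_{\vec{p},\pi}^{*}=P_{K}$, $\Theta_{\vec{\xi},\pi}^{*}=\Theta_{\vec{\xi},\pi}^{*}P_{K}$, and $\Theta_{\vec{q},\pi}^{*}=\Theta_{\vec{q},\pi}^{*}P_{K}$.

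Now I would prove $\overline{N\vec{\xi}}=\overline{N\vec{p}}$, the identity $\overline{N\vec{\eta}}=\overline{N\vec{q}}$ being obtained the same way. Put $A:=\Theta_{\vec{\xi},\pi}^{*}\Theta_{\vec{p},\pi}=\sum_{i}\Theta_{\xi_{i}}^{*}\Theta_{p_{i}}$; since $\xi_{i},p_{i}\in\mathcal{B}_{\pi}$, Lemma~\ref{lem-2.1} gives $\Theta_{\xi_{i}}^{*}\Theta_{p_{i}}\in\pi(G)'$, so $A\in\pi(G)'$. Using $\Theta_{\vec{\xi},\pi}^{*}=\Theta_{\vec{\xi},\pi}^{*}\Theta_{\vec{p},\pi}\Theta_{\vec{p},\pi}^{*}$ and $\Theta_{\vec{p},\pi}^{*}\psi_{i}=p_{i}$ we obtain $\xi_{i}=\Theta_{\vec{\xi},\pi}^{*}\psi_{i}=Ap_{i}$, that is $\vec{\xi}=A^{(k)}\vec{p}$ with $A^{(k)}\in N$. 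Moreover $\Theta_{\vec{\xi},\pi}^{*}$ is injective on $K$, so $\ker A=\ker\Theta_{\vec{p},\pi}=M_{\xi}^{\perp}$, whence the source projection of $A^{(k)}$ is $P_{M_{\xi}}^{(k)}$, which lies in $N$ since $P_{M_{\xi}}\in\pi(G)'$. Consequently the strong-operator closure of the left ideal $NA^{(k)}$ is $N\,P_{M_{\xi}}^{(k)}$, and because $P_{M_{\xi}}^{(k)}\vec{p}=\vec{p}$ (as $p_{i}\in M_{\xi}$) we conclude $\overline{N\vec{p}}=\overline{N\,P_{M_{\xi}}^{(k)}\vec{p}}=\overline{NA^{(k)}\vec{p}}=\overline{N\vec{\xi}}$. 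Finally, set $W:=\Theta_{\vec{q},\pi}^{*}\Theta_{\vec{p},\pi}=\sum_{i}\Theta_{q_{i}}^{*}\Theta_{p_{i}}\in\pi(G)'$; then $Wp_{i}=\Theta_{\vec{q},\pi}^{*}\Theta_{\vec{p},\pi}\Theta_{\vec{p},\pi}^{*}\psi_{i}=\Theta_{\vec{q},\pi}^{*}P_{K}\psi_{i}=\Theta_{\vec{q},\pi}^{*}\psi_{i}=q_{i}$, so $\vec{q}=W^{(k)}\vec{p}$ with $W^{(k)}\in N$, and symmetrically $\vec{p}=(W')^{(k)}\vec{q}$ with $W':=\Theta_{\vec{p},\pi}^{*}\Theta_{\vec{q},\pi}\in\pi(G)'$; hence $N\vec{q}=NW^{(k)}\vec{p}\subseteq N\vec{p}$ and likewise $N\vec{p}\subseteq N\vec{q}$, so $\overline{N\vec{p}}=\overline{N\vec{q}}$. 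Combining these with the identity $[\sigma^{(k)}(G)\vec{v}\,]=\overline{N\vec{v}}$ noted at the outset gives the assertion $[\sigma^{(k)}(G)\vec{\xi}\,]=[\sigma^{(k)}(G)\vec{\eta}\,]$.

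The main obstacle, I expect, is the step $\overline{N\vec{\xi}}=\overline{N\vec{p}}$ and, more basically, the decision to work inside $N=\pi(G)'\otimes\mathbb{C}I_{k}$ rather than inside $\pi^{(k)}(G)'$: one cannot simply apply Lemma~\ref{sublem2.4}(ii) to the amplified representation $\pi^{(k)}$ on $H^{(k)}$, because $\sigma^{(k)}(G)$ generates only $\pi(G)'\otimes\mathbb{C}I_{k}$, a proper subalgebra of $\pi^{(k)}(G)'=M_{k}(\pi(G)')$ as soon as $k\ge 2$, so the "super"-direct sum genuinely breaks the naive commutant pairing; every transporting operator must instead be kept of the special form $C^{(k)}$ with $C\in\pi(G)'$. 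What makes this possible is that the operators $A$, $W$, $W'$ constructed above have range/source projections exactly $P_{M_{\xi}}^{(k)}$ or $P_{M_{\eta}}^{(k)}$, and these are absorbed by $\vec{p}$ and $\vec{q}$. The remaining ingredients — that $\Theta_{\vec{p},\pi}$ is a partial isometry onto $K$, that a strongly closed left ideal of a von Neumann algebra has the form $Ne$ for a projection $e$, and the bookkeeping identities above — are routine.
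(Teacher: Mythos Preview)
Your argument is correct, but it takes a substantially different and more elaborate route than the paper's. The paper's proof is a short orthogonal-complement computation: using Lemma~\ref{lem-2.1}, the algebra $w^{*}(\sigma^{(k)}(G))$ is the WOT-closure of the span of the operators $(\Theta_{u,\pi}^{*}\Theta_{v,\pi})^{(k)}$, and for any $\vec{z}\perp[\sigma^{(k)}(G)\vec{\xi}\,]$ one checks the switching identity
\[
\sum_{i=1}^{k}\langle z_{i},\Theta_{u,\pi}^{*}\Theta_{v,\pi}\xi_{i}\rangle=\langle\Theta_{\vec{\xi},\pi}(v),\Theta_{\vec{z},\pi}(u)\rangle ,
\]
which immediately gives $\Theta_{\vec{z},\pi}(u)\perp[\Theta_{\vec{\xi},\pi}(H)]=[\Theta_{\vec{\eta},\pi}(H)]$ and hence, by the same identity read backwards, $\vec{z}\perp[\sigma^{(k)}(G)\vec{\eta}\,]$. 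No Parseval reduction, no partial isometries, no left-ideal structure. Your approach, by contrast, is constructive: you build explicit intertwiners $A,W,W'\in\pi(G)'$ with $\vec{\xi}=A^{(k)}\vec{p}$, $\vec{q}=W^{(k)}\vec{p}$, $\vec{p}=(W')^{(k)}\vec{q}$, and close the gap $\overline{N\vec{p}}\subseteq\overline{N\vec{\xi}}$ via the source projection of $A$ and the standard description of SOT-closed left ideals. What the paper's method buys is brevity and minimal prerequisites (only Lemma~\ref{lem-2.1}); what your method buys is an explicit reason, in terms of module maps inside $N$, for why the two cyclic $N$-subspaces coincide, and it makes transparent the point you flag at the end, namely that everything must stay inside $\pi(G)'\otimes\mathbb{C}I_{k}$ rather than $M_{k}(\pi(G)')$.
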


\begin{proof} By Lemma  \ref{lem-2.1},  we know that $w^{*}(\sigma(G)) = \pi(G)' $ is the closure of the linear span  of $$\{\Theta_{u, \pi}^{*}\Theta_{v, \pi}: u, v \in \mathcal{B}_{\pi}\}$$  in the weak operator topology. Hence $w^{*}(\sigma^{(k)}(G))$ is the (wot)-closure of the linear span  of $$\{ \Theta_{u, \pi}^{*}\Theta_{v, \pi}\oplus ... \oplus \Theta_{u, \pi}^{*}\Theta_{v, \pi}: u, v \in \mathcal{B}_{\pi}\}.$$

 Assume that $\vec{z}= (z_{1}, ... , z_{k})\in [\sigma^{(k)}(G)\vec{\xi}\ ]^{\perp}$. Then for any $u, v\in \mathcal{B}_{\pi}$ we have 
\begin{eqnarray*}
0 &=& \sum_{i=1}^{k}\langle z_{i} , \Theta_{u, \pi}^{*}\Theta_{v, \pi}(\xi_{i})\rangle 
 =  \sum_{i=1}^{k}\langle \Theta_{u, \pi}(z_{i}) , \Theta_{v, \pi}(\xi_{i})\rangle \\
& = & \sum_{i=1}^{k}\langle \Theta_{\xi_{i}, \pi}(v) , \Theta_{z_{i}, \pi}(u)\rangle 
 =  \langle \Theta_{\vec{\xi}, \pi}(v), \Theta_{\vec{z}, \pi}(u)\rangle .
\end{eqnarray*}
This implies $ \Theta_{\vec{z}, \pi}(u) \perp \Theta_{\vec{\xi}, \pi}(v)$. Since $v\in \mathcal{B}_{\pi}$ is arbitray and $\mathcal{B}_{\pi}$ is dense in $H$, we get that 
$ \Theta_{\vec{z}, \pi}(u) \perp [\Theta_{\vec{\xi}, \pi}(H)]$, which implies that $ \Theta_{\vec{z}, \pi}(u) \perp [\Theta_{\vec{\eta}, \pi}(H)]$. Therefore we obtain that
$$
\sum_{i=1}^{k} \langle z_{i} ,  \Theta_{u, \pi}^{*}\Theta_{v, \pi}(\eta_{i}) \rangle =  \langle \Theta_{\vec{\eta}, \pi}(v), \Theta_{\vec{z}, \pi}(u) \rangle  = 0.
$$
This implies that $\vec{z}\in [\sigma^{(k)}(G)\vec{\eta}\ ]^{\perp}$. Hence $[\sigma^{(k)}(G)\vec{\xi}\ ] \subseteq  [\sigma^{(k)}(G)\vec{\eta}\ ]$. Similarly, we also have the reversed inclusion. 
Therefore we obtain $[\sigma^{(k)}(G)\vec{\xi}\ ] = [\sigma^{(k)}(G)\vec{\eta}\ ]$, as claimed. 
\end{proof} 

\begin{lem}\label{lem2.5}  Let $(\pi, \sigma)$ be a  commuting pair of projective unitary representations of a
countable group $G$ on a Hilbert space $H$. If $\{\sigma^{(k)}(g)\vec{\xi}\}_{g\in G}$ is a  Riesz sequence, then $$\overline{span}\cup_{i=1}^{k}\{\pi(g)\xi_{i}\}_{g\in G} = H.$$
 
\end{lem}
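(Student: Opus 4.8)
The plan is to establish the stronger fact that $\cup_{i=1}^{k}\{\pi(g)\xi_{i}\}_{g\in G}$ is in fact a frame for $H$; this obviously implies the lemma, since a frame spans the whole space. Throughout I use, as holds in every application of this lemma, that $\pi$ is a frame representation and that $(\pi,\sigma)$ is a \emph{dual} commutant pair, so $\mathcal{B}_{\pi}=\mathcal{B}_{\sigma}$; in particular each $\xi_{i}$, being one coordinate of a Riesz sequence generator for $\sigma^{(k)}$, is a Bessel vector for $\sigma$ and hence for $\pi$, so the family $\cup_{i=1}^{k}\{\pi(g)\xi_{i}\}$ is automatically Bessel, say with bound $B$. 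It therefore suffices to produce a lower frame bound, i.e.\ to show that the frame operator $S:=\sum_{i=1}^{k}\Theta_{\xi_{i},\pi}^{*}\Theta_{\xi_{i},\pi}$ is bounded below on $H$.

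The engine is the Fundamental Identity. Setting $\xi=\eta=\xi_{i}$ in the computation carried out in the proof of Theorem~\ref{main-thm1}(ii) gives the operator identity $\Theta_{\xi_{i},\pi}^{*}\Theta_{\xi_{i},\pi}=Tr_{\pi(G)'}(I)^{-1}\sum_{g\in G}\langle\sigma(g)\xi_{i},\xi_{i}\rangle\,\sigma(g)$ in $\pi(G)'=w^{*}(\sigma(G))$; summing over $i$,
\[
S=Tr_{\pi(G)'}(I)^{-1}\sum_{g\in G}\langle\sigma^{(k)}(g)\vec{\xi},\vec{\xi}\,\rangle\,\sigma(g)\ \in\ \pi(G)'.
\]
On the sequence side, let $\mathcal{G}$ be the Gram operator of $\{\sigma^{(k)}(g)\vec{\xi}\,\}_{g\in G}$, i.e.\ the positive operator on $\ell^{2}(G)$ with matrix $\mathcal{G}_{h,g}=\langle\sigma^{(k)}(g)\vec{\xi},\sigma^{(k)}(h)\vec{\xi}\,\rangle$; it commutes with the left regular projective representation and so lies in $\lambda(G)'$, and its entries are governed by the very same numbers $\langle\sigma^{(k)}(g)\vec{\xi},\vec{\xi}\,\rangle$. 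Since $\pi$ is unitarily equivalent, through a complete Parseval frame vector, to a subrepresentation of $\lambda$ (Lemma~\ref{prop-sub}), the representation $\sigma$ and the right regular representation $\rho$ are projective representations with conjugate multipliers acting on the commutant side, and both integrate to normal $*$-representations of one and the same twisted group von Neumann algebra; the two displays say exactly that the abstract symbol whose image under the (faithful) standard representation is $\mathcal{G}$ has image $Tr_{\pi(G)'}(I)\cdot S$ under the representation realizing $\pi(G)'$.

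The hypothesis now enters at one point only: $\{\sigma^{(k)}(g)\vec{\xi}\,\}_{g\in G}$ is a Riesz sequence iff $\mathcal{G}$ is bounded and bounded below, i.e.\ invertible; invertibility passes through $*$-homomorphisms, so $Tr_{\pi(G)'}(I)\cdot S$, hence $S$, is invertible in $\pi(G)'$. Together with the Bessel bound this yields $cI\le S\le BI$ for some $c>0$, so $\cup_{i=1}^{k}\{\pi(g)\xi_{i}\}$ is a frame for $H$, and in particular $\overline{span}\,\cup_{i=1}^{k}\{\pi(g)\xi_{i}\}_{g\in G}=H$.

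The step I expect to be the main obstacle is exactly the identification of $\mathcal{G}$ with $Tr_{\pi(G)'}(I)\cdot S$ through the twisted group algebra: one has to match the $2$-cocycle, the inversion $g\mapsto g^{-1}$ and the left/right sides consistently so that ``same abstract symbol'' is literally true, and to know that invertibility in $\pi(G)'$ is equivalent to being bounded below on $H$ (which uses the finiteness of $\pi(G)'$ from Lemma~\ref{prop-sub}). If one prefers to avoid the Fundamental Identity, an alternative skeleton is: using the canonical dual Riesz basis (whose analysis composed with synthesis is the orthogonal projection $P_{N}$ onto $N:=\overline{span}\{\sigma^{(k)}(g)\vec{\xi}\,\}$ and lies in $\sigma^{(k)}(G)'=M_{k}(\pi(G)'')$), show that $P_{N}$ commutes with the diagonal projection coming from $M:=\overline{span}\cup_{i}\{\pi(g)\xi_{i}\}$, deduce $N\subseteq M^{(k)}$ and hence that $M$ reduces both $\pi$ and $\sigma$, so that the projection onto $M$ is central in $\pi(G)''$, and finish by restricting to the central summand $M^{\perp}$ and invoking the single-vector duality principle (Theorem~\ref{thm-main1}); the sticking point there is again the last move, namely excluding a nonzero central summand killed by all the $\xi_{i}$, which once more requires the full Riesz property rather than mere completeness.
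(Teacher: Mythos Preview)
Your plan bypasses the lemma as stated and aims directly for the stronger conclusion of Lemma~\ref{lem2.6} (that $\cup_{i}\{\pi(g)\xi_{i}\}$ is already a frame). This is a genuinely different route from the paper, and it also imports stronger hypotheses (frame representation, \emph{dual} commutant pair) than the lemma assumes.

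The gap is precisely where you flag it, but it is structural rather than cosmetic. The Gram operator $\mathcal{G}=\Theta_{\vec{\xi},\sigma^{(k)}}\Theta_{\vec{\xi},\sigma^{(k)}}^{*}$ lies in the \emph{commutant} $\lambda_{\mu'}(G)'$ (with $\mu'$ the multiplier of $\sigma$), whereas your $S$ lies in the \emph{bicommutant} $\sigma(G)''=\pi(G)'$. Sharing Fourier coefficients does not produce a $*$-homomorphism between these two sides: $\sigma$ and the right regular $\rho_{\mu'}$ carry \emph{conjugate} multipliers $\mu'$ and $\overline{\mu'}$, so they do not integrate the same twisted group algebra, and the only canonical bridge from $\lambda_{\mu'}(G)'$ to $\lambda_{\mu'}(G)''$ is the Tomita conjugation, which is an anti-isomorphism. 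One can attempt to continue (realise $\lambda_{\mu'}$ inside $\sigma$ via a Riesz vector, obtain a restriction map $\sigma(G)''\to\lambda_{\mu'}(G)''$, argue it is unital and faithful, and check that $Tr_{\pi(G)'}(I)\cdot S$ is sent to $J\mathcal{G}J$), but none of that is written down, faithfulness is not automatic, and ``invertibility passes through $*$-homomorphisms'' only goes in the forward direction without it.

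The paper's own argument avoids all of this. Take $x\perp\pi(g)\xi_{i}$ for all $g,i$; since $w^{*}(\pi(G))=\sigma(G)'$ this yields $[\sigma(G)'x]\perp[\sigma(G)'\xi_{i}]$, so by Lemma~\ref{sublem2.4}(i) the vectors $x$ and $\xi_{i}$ are $\sigma$-orthogonal for every $i$. Embedding $x$ as $\vec{x}=(x,0,\dots,0)\in H^{(k)}$ then gives $\Theta_{\vec{x},\sigma^{(k)}}(H^{(k)})\perp\Theta_{\vec{\xi},\sigma^{(k)}}(H^{(k)})=\ell^{2}(G)$, hence $x=0$. The Riesz hypothesis is used only through surjectivity of the analysis operator, and no Fundamental Identity or invertibility transfer is needed.
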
 

\begin{proof} Assume that $x\perp \pi(g)\xi_{i}$ for all $g\in G$ and $i=1, ... , k$. We need to show that $x = 0$.  Since $w^{*}(\pi(G)) = \sigma(G)'$, we get that $[\sigma(G)'x] \perp [\sigma(G)'\xi_{i}]$. Applying  Lemma \ref{sublem2.4} (i) to $\sigma$, we get that $x$ and $\xi_{i}$ are $\sigma$-orthogonal. This implies that $range(\Theta_{x, \sigma}) \perp range(\Theta_{\xi_{i}, \sigma})$ for every $i$. Let $\vec{x} = (x, 0, ..., 0)\in H^{(k)}$. Then we have  $range(\Theta_{\vec{x}, \sigma^{(k)}}) \perp range( \Theta_{\vec{\xi}, \sigma^{(k)}})$. Since $\{\sigma^{(k)}(g)\vec{\xi}\}_{g\in G}$ is a  Riesz sequence, we know that $ range( \Theta_{\vec{\xi}, \sigma^{(k)}} ) = \ell^{2}(G)$. Thus $range(\Theta_{\vec{x}, \sigma^{(k)}}) = \{0\}$, which implies $\vec{x} = 0$ and hence $x = 0$, as claimed.
\end{proof}

\begin{lem}\label{lem2.6}  Let $(\pi, \sigma)$ be a commutant pair of projective  unitary representations of a
countable group $G$ on a Hilbert space $H$. If $\{\sigma^{(k)}(g)\vec{\xi}\}_{g\in G}$ is a  Riesz sequence, then $\cup_{i=1}^{k}\{\pi(g)\xi_{i}\}_{g\in G}$ is a frame for $H$.
 
\end{lem}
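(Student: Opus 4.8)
The plan is to combine Lemma \ref{lem2.5}, which already gives that the closed linear span of $\cup_{i=1}^{k}\{\pi(g)\xi_{i}\}_{g\in G}$ is all of $H$, with a comparison argument that upgrades this completeness to the full frame inequality. First I would settle the Bessel bound: since $\{\sigma^{(k)}(g)\vec{\xi}\}_{g\in G}$ is a Riesz sequence it is in particular Bessel in $H^{(k)}$, so $\vec{\xi}$ is a Bessel vector for $\sigma^{(k)}$ and hence each $\xi_{i}\in\mathcal{B}_{\sigma}=\mathcal{B}_{\pi}$; a finite union of Bessel sequences for $\pi$ is Bessel, so $\cup_{i=1}^{k}\{\pi(g)\xi_{i}\}_{g\in G}$ is a Bessel sequence and $\mathcal{B}_{\pi}$ is dense. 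Now Lemma \ref{lem2.3} produces $\vec{\eta}=(\eta_{1},\dots,\eta_{k})$ with $\cup_{i=1}^{k}\{\pi(g)\eta_{i}\}_{g\in G}$ a complete Parseval frame for $H$ and $\Theta_{\vec{\eta},\pi}(H)=[\Theta_{\vec{\xi},\pi}(H)]$, whence Lemma \ref{lem2.4} gives $[\sigma^{(k)}(G)\vec{\xi}\,]=[\sigma^{(k)}(G)\vec{\eta}\,]=:N$.

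The heart of the matter is to show that the $\sigma^{(k)}$-orbit of the Parseval multi-frame vector $\vec{\eta}$ is an \emph{orthogonal} basis of $N$. I would use the faithful trace $Tr_{\pi(G)'}(A)=\sum_{i}\langle A\eta_{i},\eta_{i}\rangle$ on $\pi(G)'=w^{*}(\sigma(G))$ supplied by Lemma \ref{lem-2.2}, together with its independence of the chosen Parseval multi-frame vector: applied to a complete Parseval frame generator $\varphi$ of $\pi$ (available since $\pi$ is a frame representation) and combined with Theorem \ref{thm-main1} (so that $\{\sigma(g)\varphi\}_{g\in G}$ is an orthogonal sequence), this yields $Tr_{\pi(G)'}(\sigma(s))=Tr_{\pi(G)'}(I)\,\delta_{s,e}$ for all $s\in G$. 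Since $\langle\sigma^{(k)}(g)\vec{\eta},\sigma^{(k)}(h)\vec{\eta}\rangle=\sum_{i}\langle\sigma(h)^{*}\sigma(g)\eta_{i},\eta_{i}\rangle$, which up to a unimodular scalar coming from the $2$-cocycle of $\sigma$ equals $Tr_{\pi(G)'}(\sigma(h^{-1}g))$, this inner product is $Tr_{\pi(G)'}(I)\,\delta_{g,h}$; so with $c:=Tr_{\pi(G)'}(I)=\sum_{i}\|\eta_{i}\|^{2}>0$ the family $\{c^{-1/2}\sigma^{(k)}(g)\vec{\eta}\}_{g\in G}$ is an orthonormal basis of $N$, i.e.\ $c^{-1/2}\vec{\eta}$ is a complete Parseval frame vector for $\rho:=\sigma^{(k)}|_{N}$.

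Now I would run the comparison on $N$. Being a Riesz basis of $N$, the orbit $\{\sigma^{(k)}(g)\vec{\xi}\}_{g\in G}$ is a frame for $N$, so Lemma \ref{param-lem}(ii) applied to $\rho$ gives an invertible $U\in\rho(G)''$ with $\vec{\xi}=U(c^{-1/2}\vec{\eta})$. Because $N$ is $\sigma^{(k)}$-invariant, $A\mapsto A|_{N}$ is a normal $*$-homomorphism on $w^{*}(\sigma^{(k)}(G))$, so $\rho(G)''=\{A|_{N}:A\in w^{*}(\sigma^{(k)}(G))\}=\{B^{(k)}|_{N}:B\in w^{*}(\sigma(G))\}$, and $w^{*}(\sigma(G))=\pi(G)'$; writing $U^{-1}=B_{1}^{(k)}|_{N}$ with $B_{1}\in\pi(G)'$ gives $\eta_{i}=B\xi_{i}$ for every $i$, where $B:=c^{1/2}B_{1}\in\pi(G)'$. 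Finally $\cup_{i=1}^{k}\{\pi(g)\eta_{i}\}_{g\in G}$ is a frame for $H$, each $\xi_{i}\in\mathcal{B}_{\pi}$, and $\pi(G)'$ is finite (it carries the faithful finite trace $Tr_{\pi(G)'}$), so Lemma \ref{lem2.1}, applied with the roles of the $\xi_{i}$ and the $\eta_{i}$ interchanged, forces $B$ to be invertible and $\cup_{i=1}^{k}\{\pi(g)\xi_{i}\}_{g\in G}$ to be a frame for $H$, as claimed.

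I expect the middle step — that the dual ``$\oplus$-orbit'' of a complete Parseval multi-frame vector is orthonormal, equivalently the trace identity $Tr_{\pi(G)'}(\sigma(s))=Tr_{\pi(G)'}(I)\,\delta_{s,e}$ — to be the main obstacle, since Theorem \ref{thm-main1} is only available for single generators and one must pass through the invariance of the trace in Lemma \ref{lem-2.2} (alternatively, one first establishes the Parseval version of the multi-frame/super-frame duality and quotes it here). A secondary technical point is the identification $\rho(G)''=\{B^{(k)}|_{N}:B\in\pi(G)'\}$, which is what lets one descend from $U\in\rho(G)''$ to an operator $B\in\pi(G)'$ with $\eta_{i}=B\xi_{i}$ and then invoke Lemma \ref{lem2.1}.
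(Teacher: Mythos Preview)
Your overall route is the paper's: invoke Lemma~\ref{lem2.5} for completeness, Lemma~\ref{lem2.3} for a Parseval multi-frame vector $\vec{\eta}$ with matching analysis range, Lemma~\ref{lem2.4} to identify $M:=[\sigma^{(k)}(G)\vec{\xi}\,]=[\sigma^{(k)}(G)\vec{\eta}\,]$, then descend via $w^{*}(\sigma^{(k)}(G)|_{M})=\{A^{(k)}|_{M}:A\in\pi(G)'\}$ to an operator $A\in\pi(G)'$ linking the $\xi_i$ and $\eta_i$, and finish with Lemma~\ref{lem2.1}. The difference is that your ``middle step'' --- proving that $\{\sigma^{(k)}(g)\vec{\eta}\}_{g\in G}$ is an orthogonal basis of $M$ via the trace identity $Tr_{\pi(G)'}(\sigma(s))=Tr_{\pi(G)'}(I)\,\delta_{s,e}$ --- is not needed, and the paper bypasses it entirely.

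The paper's shortcut is to reverse the roles you assign in Lemma~\ref{param-lem}: since $\{\sigma^{(k)}(g)\vec{\xi}\}_{g\in G}$ is a Riesz basis for $M$, the vector $\vec{\xi}$ is \emph{already} a complete frame vector for $\sigma^{(k)}|_{M}$, while $\vec{\eta}\in M$ is Bessel; so Lemma~\ref{param-lem}(iii) (after normalizing $\vec{\xi}$ to Parseval via its frame operator) directly yields $T\in w^{*}(\sigma^{(k)}(G)|_{M})$ with $\vec{\eta}=T\vec{\xi}$, hence $A\xi_i=\eta_i$ with $A\in\pi(G)'$, and Lemma~\ref{lem2.1} finishes. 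This avoids both the trace computation and your reliance on a \emph{single} Parseval frame generator $\varphi$ for $\pi$ --- an assumption not contained in the hypothesis ``commutant pair'' (only in ``dual commutant pair''), and precisely the obstacle you flagged. Your argument can be repaired under the stronger hypothesis, but the paper's direct use of $\vec{\xi}$ as the frame vector makes the whole detour superfluous.
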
 

\begin{proof}  From Lemma \ref{lem2.5} we know that $\overline{span}\cup_{i=1}^{k}\{\pi(g)\xi_{i}\}_{g\in G} = H$. By using  Lemma \ref{lem2.3}, there exists a vector $\vec{\eta}$ such that
$\cup_{i=1}^{k}\{\pi(g)\eta_{i}\}_{g\in G}$ is a Parseval frame for $H$, and  $
\Theta_{\vec{\eta}, \pi}(H) = [\Theta_{\vec{\xi}, \pi}(H)].$ By Lemma \ref{lem2.4}  we get that  $[\sigma^{(k)}(G)\vec{\eta}\ ] = [\sigma^{(k)}(G)\vec{\xi}\ ] $.  Let $M = [\sigma^{(k)}(G)\vec{\xi}\ ]$. Since $\vec{\xi}$ is a frame vector and $\vec{\eta}$ is a Bessel vector for $\sigma^{(k)}|_{M}$, we have by Lemma \ref {param-lem} that there is an operator $T$  in $w^{*}(\sigma^{(k)}(G)|_{M})$ such that $\vec{\eta} = T\vec{\xi}$. Write $ T = (A \oplus ... \oplus A)|_{M}$ with $A\in w^{*}(\sigma(G)) = \pi(G)'$. Then we have
$A\xi_{i} = \eta_{i}$ for $i=1, ... , k$. From Lemma \ref{lem2.1} we get that $A$ is invertible and  $\cup_{i=1}^{k}\{\pi(g)\xi_{i}\}_{g\in G}$ is a  frame for $H$
\end{proof}

\begin{lem}\label{lem2.9} Let $\pi$ be a projective  unitary representations of a
countable group $G$ on a Hilbert space $H$ such that $\mathcal{B}_{\pi}$ is dense in $H$ and it admits a Riesz sequence vector. Suppose that $\xi\in H$ such that $range(\Theta_{\xi, \pi})$ is not dense in $\ell^{2}(G)$. Then there exits a nonzero  vector $x\in H$ such that $range (\Theta_{x, \pi})$ and $range(\Theta_{\xi, \pi})$ are orthogonal.
\end{lem}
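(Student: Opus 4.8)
The plan is to exploit the hypothesis that $\pi$ admits a Riesz sequence vector $\zeta$ in order to transport, from $\ell^{2}(G)$ back into $H$, the directions that $range(\Theta_{\xi,\pi})$ fails to reach. First I would record the basic features of $\zeta$: writing $K=[\{\pi(g)\zeta:g\in G\}]$, the family $\{\pi(g)\zeta\}_{g\in G}$ is a Riesz basis of $K$, and since each $\pi(g)\zeta$ lies in $K$ the analysis operator $\Theta_{\zeta,\pi}:H\to\ell^{2}(G)$ annihilates $K^{\perp}$ and restricts on $K$ to a bounded invertible map onto $\ell^{2}(G)$; in particular $\Theta_{\zeta,\pi}$ is surjective, hence $\Theta_{\zeta,\pi}^{*}:\ell^{2}(G)\to H$ is injective. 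I will also use the intertwining relation $\Theta_{\eta,\pi}\,\pi(g)=\lambda(g)\,\Theta_{\eta,\pi}$, valid for every $\eta\in H$, where $\lambda$ is the left regular $\mu$-projective representation of $G$; passing to $g^{-1}$ and using $\pi(g)^{*}=\overline{\mu(g,g^{-1})}\,\pi(g^{-1})$ and the analogous identity for $\lambda$, this gives $\Theta_{\zeta,\pi}\,\pi(g)^{*}=\lambda(g)^{*}\,\Theta_{\zeta,\pi}$.

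Next I would build $x$. Put $N:=[\,range(\Theta_{\xi,\pi})\,]$; by hypothesis $N\neq\ell^{2}(G)$, so the orthogonal projection $Q$ of $\ell^{2}(G)$ onto $N^{\perp}$ is nonzero. The relation $\Theta_{\xi,\pi}\pi(g)=\lambda(g)\Theta_{\xi,\pi}$ shows that $range(\Theta_{\xi,\pi})$, and hence $N$, is $\lambda(G)$-invariant, so $Q\in\lambda(G)'$. Since $\{\lambda(g)\chi_{e}:g\in G\}$ is precisely the standard orthonormal basis $\{\chi_{g}\}$, the vector $\chi_{e}$ is cyclic for $\lambda(G)$ and therefore separating for $\lambda(G)'$; as $Q\neq0$ this forces $Q\chi_{e}\neq0$. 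I would then set
$$x:=\Theta_{\zeta,\pi}^{*}\bigl(Q\chi_{e}\bigr),$$
which is nonzero because $\Theta_{\zeta,\pi}^{*}$ is injective. A short calculation — moving $\pi(g)^{*}$ past $\Theta_{\zeta,\pi}$, using $Q\in\lambda(G)'$ and $\lambda(g)\chi_{e}=\chi_{g}$ — gives, for all $y\in H$,
$$\langle y,\pi(g)x\rangle=\bigl\langle Q\,\Theta_{\zeta,\pi}y,\,\chi_{g}\bigr\rangle\qquad(g\in G),$$
so that $\Theta_{x,\pi}=Q\,\Theta_{\zeta,\pi}$ (in particular $x\in\mathcal{B}_{\pi}$). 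Because $\Theta_{\zeta,\pi}$ is onto $\ell^{2}(G)$ this yields $range(\Theta_{x,\pi})=Q\bigl(\ell^{2}(G)\bigr)=N^{\perp}$, which is orthogonal to $N\supseteq range(\Theta_{\xi,\pi})$; so $x$ is the required vector.

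The step that needs the right idea is the choice of $x$: the point is that applying $\Theta_{\zeta,\pi}^{*}$ to the single vector $Q\chi_{e}$ already conjugates the projection $Q\in\lambda(G)'$ into an analysis operator, producing $\Theta_{x,\pi}=Q\,\Theta_{\zeta,\pi}$; the supporting facts — that $\chi_{e}$ is separating for $\lambda(G)'$, so $Q\chi_{e}\neq0$, and the cocycle bookkeeping in the intertwining identities — are routine. One could instead try to route the argument through Lemma~\ref{sublem2.4}(i): it would suffice to produce a nonzero $x$ with $[\pi(G)'x]\perp[\pi(G)'\xi]$, and any nonzero vector of the $\pi(G)'$-invariant subspace $[\pi(G)'\xi]^{\perp}$ does this provided $[\pi(G)'\xi]\neq H$; but deriving that last fact from the non-density of $range(\Theta_{\xi,\pi})$ seems to require essentially the construction above, so I would present the direct argument.
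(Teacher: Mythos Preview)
Your proof is correct and follows essentially the same route as the paper: both arguments pull the vector $P^{\perp}\chi_{e}$ back to $H$ via (the adjoint of) the analysis operator of a Riesz sequence vector and then compute the analysis operator of the resulting $x$. The only cosmetic difference is that the paper first passes to the polar part $V$ of $\Theta_{\zeta,\pi}$ and sets $x=V^{*}P^{\perp}\chi_{e}$, whereas you use $\Theta_{\zeta,\pi}^{*}$ directly; your version is slightly more economical and you are more explicit about why $Q\chi_{e}\neq 0$ (via $\chi_{e}$ being separating for $\lambda(G)'$), a point the paper leaves implicit.
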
 
\begin{proof} Let $\psi\in H$ be such that $\{\pi(g)\psi\}_{g\in G}$ is a Riesz sequence and $\Theta_{\psi, \pi} = V |\Theta_{\psi, \pi}|$ be the polar decomposition of its analysis operator. Since $range(\Theta_{\psi, \pi}) = \ell^{2}(G)$, we have that $V$ is a co-isometry.  It can be verified that $V\pi(g) = \lambda(g)V$ and hence we get $\pi(g)V^{*} = V^{*}\lambda(g)$ for every $g\in G$, where $\lambda$ is the left regular projective unitary representation associated with the same multiplier as $\pi$. Now let $P$ be the orthogonal projection onto $[range(\Theta_{\xi, \pi})]$ and $x = V^{*}P^{\perp}\chi_{e}$, where $P^{\perp} = I-P$. Then $P$ commutes with $\lambda$ and so $x \neq 0$.  Now for any $y\in H$ we get 
\begin{eqnarray*}
\Theta_{x, \pi}(y) &=& \sum_{g\in G}\langle y, \pi(g)x\rangle \chi_{g}\\
& = &\sum_{g\in G}\langle y, \pi(g)V^{*}P^{\perp}\chi_{e}\rangle \chi_{g}\\
&=&\sum_{g\in G}\langle y, V^{*}\lambda(g)P^{\perp}\chi_{e}\rangle \chi_{g}\\
&=&\sum_{g\in G}\langle y, V^{*}P^{\perp}\lambda(g)\chi_{e}\rangle \chi_{g}\\
&=&\sum_{g\in G}\langle P^{\perp}Vy, \chi_{g}\rangle \chi_{g}\\
&=&P^{\perp}Vy.
\end{eqnarray*}
Thus $range(\Theta_{x, \pi})$ is contained in the range space of $P^{\perp}$, and therefore we get that  $range (\Theta_{x, \pi})$ and $range(\Theta_{\xi, \pi})$ are orthogonal.
\end{proof}

Now we are ready to prove Theorem \ref{main-thm2}, which follows from the next two theorems. 

\begin{theo}\label{subthm-main3}  Let $\pi$ be a frame representation and  $(\pi, \sigma)$ be 
a dual commutant pair of projective unitary representations of $G$ on $H$ and ${\vec \xi} = (\xi_{1}, ... , \xi_{k})\in H^{(k)}$. Then  $\cup_{i=1}^{k}\{\pi(g)\xi_{i}\}_{g\in G}$ is a frame for $H$
 if and only if $\{\sigma(g)\xi_{1}\oplus ... \oplus \sigma(g)\xi_{k}\}_{g\in G}$ is Riesz sequence.

\end{theo}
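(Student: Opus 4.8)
My plan is to prove the two implications separately. The ``if'' direction is exactly Lemma~\ref{lem2.6}: because $\pi$ is a frame representation, $\pi(G)'$ is finite (Lemma~\ref{prop-sub}) and $\mathcal{B}_{\pi}$ is dense, so the hypotheses needed there are in force, and a Riesz sequence $\{\sigma(g)\xi_{1}\oplus\dots\oplus\sigma(g)\xi_{k}\}_{g\in G}$ forces $\cup_{i=1}^{k}\{\pi(g)\xi_{i}\}_{g\in G}$ to be a frame for $H$. So the real content is the ``only if'' direction, which I would prove as follows.

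Assume $\cup_{i=1}^{k}\{\pi(g)\xi_{i}\}_{g\in G}$ is a frame for $H$. Then each $\xi_{i}\in\mathcal{B}_{\pi}=\mathcal{B}_{\sigma}$, so $\vec{\xi}$ is a Bessel vector for $\sigma^{(k)}$ and $\Theta_{\vec{\xi},\sigma^{(k)}}\colon H^{(k)}\to\ell^{2}(G)$ is bounded. By Lemma~\ref{lem2.2} (applicable since $\pi(G)'$ is finite) the sequence $\{\sigma^{(k)}(g)\vec{\xi}\}_{g\in G}$ is a frame sequence, so its analysis operator $\Theta_{\vec{\xi},\sigma^{(k)}}$ has closed range. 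A frame sequence is a Riesz basis for its closed span precisely when its analysis operator maps onto $\ell^{2}(G)$; hence it suffices to show that $range(\Theta_{\vec{\xi},\sigma^{(k)}})$ is dense in $\ell^{2}(G)$.

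To see this I would argue by contradiction and apply Lemma~\ref{lem2.9} to the representation $\sigma^{(k)}$, after checking its hypotheses: $\mathcal{B}_{\sigma^{(k)}}=(\mathcal{B}_{\sigma})^{(k)}$ is dense in $H^{(k)}$, and $\sigma^{(k)}$ admits a Riesz sequence vector, e.g.\ $(\psi,0,\dots,0)$ where $\psi$ is a Riesz sequence vector for $\sigma$ (one exists by Definition~\ref{dual-pair} together with Theorem~\ref{thm-main1}, since $\pi$ is a frame representation). If $range(\Theta_{\vec{\xi},\sigma^{(k)}})$ were not dense, Lemma~\ref{lem2.9} would produce a nonzero $\vec{x}=(x_{1},\dots,x_{k})\in H^{(k)}$ with $range(\Theta_{\vec{x},\sigma^{(k)}})\perp range(\Theta_{\vec{\xi},\sigma^{(k)}})$. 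Restricting the analysis operators to vectors supported in a single coordinate gives $range(\Theta_{x_{j},\sigma})\subseteq range(\Theta_{\vec{x},\sigma^{(k)}})$ and $range(\Theta_{\xi_{l},\sigma})\subseteq range(\Theta_{\vec{\xi},\sigma^{(k)}})$ for all $j,l$, so $x_{j}$ and $\xi_{l}$ are $\sigma$-orthogonal for all $j,l$. By Lemma~\ref{sublem2.4}(i) applied to $\sigma$ (recall $\sigma(G)'=\pi(G)''$), this means $[\pi(G)''x_{j}]\perp[\pi(G)''\xi_{l}]$ for all $j,l$; since $\pi(g)\xi_{l}\in\pi(G)''\xi_{l}$ and $\cup_{l}\{\pi(g)\xi_{l}\}_{g\in G}$ spans $H$, it follows that $[\pi(G)''x_{j}]\perp H$, hence $x_{j}=0$ for every $j$, contradicting $\vec{x}\neq0$.

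The step I expect to require the most care is the reduction from the ``super'' representation $\sigma^{(k)}$ back to $\sigma$ and $\pi$: relating $range(\Theta_{\vec{\xi},\sigma^{(k)}})$ to the single-generator operators $\Theta_{\xi_{i},\sigma}$, and checking that $\sigma^{(k)}$ inherits both the density of its Bessel vectors and the existence of a Riesz sequence vector so that Lemmas~\ref{lem2.9} and~\ref{sublem2.4} apply. It is also worth noting exactly where finiteness of $\pi(G)'$ enters --- only via Lemma~\ref{lem2.2}, to supply the closed-range (frame-sequence) property, without which ``not a Riesz sequence'' would not translate into ``range not dense'' and the contradiction scheme would break down.
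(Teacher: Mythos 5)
Your proposal is correct and follows essentially the same route as the paper's proof: sufficiency via Lemma \ref{lem2.6}, then for necessity using Lemma \ref{lem2.2} to get a frame sequence with closed analysis range, Lemma \ref{lem2.9} applied to $\sigma^{(k)}$ to produce a contradicting vector, and Lemma \ref{sublem2.4} plus the spanning property of the frame to force that vector to vanish. The only difference is that you spell out details the paper leaves implicit (e.g.\ the explicit Riesz sequence vector $(\psi,0,\dots,0)$ for $\sigma^{(k)}$), which is fine.
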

\begin{proof} The sufficient part has been proved in Lemma \ref{lem2.6}. To prove the necessary part, let assume that $\cup_{i=1}^{k}\{\pi(g)\xi_{i}\}_{g\in G}$ is a frame for $H$. Then, by Lemma \ref{lem2.2}, we have that $\{\sigma(g)\xi_{1}\oplus ... \oplus \sigma(g)\xi_{k}\}_{g\in G}$ is a frame sequence in $H^{(k)}$. So in order to show that it is a Riesz sequence, it suffices to show that the range space  (which is already closed)  of its analysis operator $\Theta_{\vec{\xi}, \sigma^{(k)}}$ is the entire space $\ell^{2}(G)$. 

Assume to the contrary that $ \Theta_{\vec{\xi}, \sigma^{(k)}}(H^{(k)})\neq \ell^{2}(G)$. By the assumption on dual commutant pairs we know that $\sigma^{(k)}$ admits a Riesz sequence vector, and the set of its Bessel vectors is dense in $H^{(k)}$, we obtain by Lemma \ref{lem2.9} that there exists a nonzero vector $\vec{x}\in H^{(k)}$ such that $range(\Theta_{\vec{x}, \sigma^{(k)}}) \perp range (\Theta_{\vec{\xi}, \sigma^{(k)}})$. 

Let $H_{i} = 0\oplus ... \oplus 0 \oplus H\oplus 0 ... \oplus 0$, where $H$ appears in the $i$-th component. Then we in particular get $\Theta_{\vec{x}, \sigma^{(k)}}(H_{i}) \perp \Theta_{\vec{\xi}, \sigma^{(k)}}(H_{j})$.  Note that $\Theta_{\vec{x}, \sigma^{(k)}}(H_{i})  = \Theta_{x_{i}, \sigma}(H)$ and $\Theta_{\vec{\xi}, \sigma^{(k)}}(H_{j})  = \Theta_{\xi_{j}, \sigma}(H)$. So we have that $\Theta_{x_{i}, \sigma}(H) \perp \Theta_{\xi_{j}, \sigma}(H)$ for all $i, j = 1, ... , k$.  Thus $x_{i}$ and $\xi_{j}$ are $\sigma$-orthogonal for all $i, j = 1, ... , k$. By Lemma \ref{sublem2.4} we get that 
 $[\sigma(G)'x_{i}]\perp [\sigma(G)'\xi_{j}]$ for all $i, j$.  Since $\sigma(G)' = w^{*}(\pi(G))$ we get in particular that for each $i$, $x_{i}\perp [\pi(G)\xi_{j}]$ for $j = 1, ... k$. Hence $x_{i} = 0$ for each $i$ and so $\vec{x} = 0$, which is a contradiction. Therefore we have that  $\Theta_{\vec{\xi}, \sigma^{(k)}}(H^{(k)}) = \ell^{2}(G)$, and hence $\{\sigma^{(k)}(g)\vec{\xi}\}_{g\in G}$ is a Riesz sequence, as claimed.
\end{proof}

\begin{theo}\label{subthm-main2}  Let $\pi$ be a frame representation and  $(\pi, \sigma)$ be
a dual commutant pair of projective unitary representations of $G$ on $H$. Let ${\vec \xi} = (\xi_{1}, ... , \xi_{k})$ . Then we have

(i) $\vec{\xi}$ is a Parserval super-frame vector for $\pi$ if and only if $\{\sigma(g)\xi_{j}: g\in G, j = 1, ..., k\}$ is an orthogonal sequence in $H$ and $||\xi_{i}||^{2} = Tr_{\pi(G)'}(I)$.

(ii) $\vec{\xi}$ is a  super-frame vector for $\pi$ if and only if $\{\sigma(g)\xi_{j}: g\in G, j = 1, ..., k\}$ is Riesz sequence in $H$.

\end{theo}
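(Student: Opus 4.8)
The proof treats (i) and (ii) in turn, and throughout I will use the identity $[\pi(G)'\zeta]=[\sigma(G)\zeta]$ for every $\zeta\in H$ (valid since $\sigma(G)''=\pi(G)'$), so that by Lemma~\ref{sublem2.4} two Bessel vectors $\zeta,\zeta'$ are $\pi$-orthogonal if and only if $[\sigma(G)\zeta]\perp[\sigma(G)\zeta']$, and $\pi$-weakly equivalent if and only if $[\sigma(G)\zeta]=[\sigma(G)\zeta']$. I will also use the standard fact that $\vec\xi$ is a (Parseval) super-frame vector for $\pi$ exactly when $\{\pi^{(k)}(g)\vec\xi\}_{g\in G}$ is a (Parseval) frame for $H^{(k)}$.

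For (i): if $\vec\xi$ is a Parseval super-frame vector, then each $\{\pi(g)\xi_j\}$ is a Parseval frame for $H$, so by Theorem~\ref{thm-main1} each $\{\sigma(g)\xi_j\}$ is an orthogonal sequence; applying Theorem~\ref{main-thm1}(i) with frame operator $I$ gives $\|\xi_j\|^2=Tr_{\pi(G)'}(I)$; and $\pi$-orthogonality of $\xi_i,\xi_j$ for $i\neq j$ means $[\sigma(G)\xi_i]\perp[\sigma(G)\xi_j]$, hence $\sigma(g)\xi_i\perp\sigma(h)\xi_j$ for all $g,h$, so that $\{\sigma(g)\xi_j:g\in G,\ j=1,\dots,k\}$ is an orthogonal sequence with the stated norms. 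Conversely, if $\{\sigma(g)\xi_j:g,j\}$ is an orthogonal sequence with $\|\xi_j\|^2=Tr_{\pi(G)'}(I)$, then each $\{\sigma(g)\xi_j\}$ is an orthogonal sequence, so each $\{\pi(g)\xi_j\}$ is a tight frame (Theorem~\ref{thm-main1}), and Theorem~\ref{main-thm1}(i) together with the norm condition forces the tight-frame constant to be $1$, i.e.\ each $\{\pi(g)\xi_j\}$ is a Parseval frame; moreover $\sigma(g)\xi_i\perp\sigma(h)\xi_j$ for $i\neq j$ gives $[\sigma(G)\xi_i]\perp[\sigma(G)\xi_j]$, hence $\xi_i,\xi_j$ are $\pi$-orthogonal, so $\vec\xi$ is a Parseval super-frame vector.

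For (ii) the ``only if'' direction is quick: if $\vec\xi$ is a super-frame vector, then by Theorem~\ref{thm-main1} each $\{\sigma(g)\xi_j\}$ is a Riesz basis of $M_j:=[\sigma(G)\xi_j]$, and $\pi$-orthogonality gives $M_i\perp M_j$ for $i\neq j$; since the blocks are mutually orthogonal, $\cup_{j=1}^{k}\{\sigma(g)\xi_j\}$ inherits the lower and upper Riesz bounds $\min_j A_j$ and $\max_j B_j$ of the blocks, hence is a Riesz sequence. For the ``if'' direction, assume $\cup_{j=1}^{k}\{\sigma(g)\xi_j\}$ is a Riesz sequence. Each subfamily $\{\sigma(g)\xi_j\}$ is then a Riesz sequence, so each $\{\pi(g)\xi_j\}$ is a frame for $H$ by Theorem~\ref{thm-main1}; what remains is to show $M_i\perp M_j$ for $i\neq j$. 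I would reduce this to part (i). Put $M_0:=[\cup_{j}\sigma(G)\xi_j]$. Since the union is a Riesz sequence, $\Theta_{\vec\xi,\sigma}\colon H\to\ell^2(G)^{(k)}$ is surjective, so $[\Theta_{\vec\xi,\sigma}(H)]=\ell^2(G)^{(k)}$. Applying Lemma~\ref{lem2.3} to $\sigma$ gives $\vec\eta=(\eta_1,\dots,\eta_k)$ with $\cup_{j}\{\sigma(g)\eta_j\}$ a Parseval frame for $M_0$ and $\Theta_{\vec\eta,\sigma}(H)=\ell^2(G)^{(k)}$; thus $\Theta_{\vec\eta,\sigma}|_{M_0}$ is an isometry onto $\ell^2(G)^{(k)}$, i.e.\ a unitary, so $\cup_{j}\{\sigma(g)\eta_j\}$ is an orthonormal basis of $M_0$. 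After rescaling the $\eta_j$ so that $\|\eta_j\|^2=Tr_{\pi(G)'}(I)$, part~(i) shows $\vec\eta$ is a Parseval super-frame vector for $\pi$, so $\{\pi^{(k)}(g)\vec\eta\}$ is a Parseval frame for $H^{(k)}$ and $N_j:=[\sigma(G)\eta_j]$ are mutually orthogonal. Passing back to $\vec\xi$: from $\Theta_{\vec\eta,\sigma}(H)=[\Theta_{\vec\xi,\sigma}(H)]$ and Lemma~\ref{lem2.4} (applied to the commutant pair $(\sigma,\pi)$) we get $[\pi^{(k)}(G)\vec\xi]=[\pi^{(k)}(G)\vec\eta]=H^{(k)}$; since $\{\pi^{(k)}(g)\vec\xi\}$ is Bessel, Lemma~\ref{param-lem}(iii) for $\pi^{(k)}$, together with $\pi^{(k)}(G)''=\{A^{(k)}:A\in\pi(G)''\}$, yields a single $A\in\pi(G)''$ with $\xi_j=A\eta_j$ for all $j$. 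As $A\in\pi(G)''=\sigma(G)'$ commutes with $\sigma^{(k)}(G)$, we have $\sigma^{(k)}(g)\vec\xi=A^{(k)}\sigma^{(k)}(g)\vec\eta$; since $\cup_j\{\pi(g)\xi_j\}$ is a frame for $H$, Theorem~\ref{subthm-main3} makes $\{\sigma^{(k)}(g)\vec\xi\}$ a Riesz sequence, and likewise $\{\sigma^{(k)}(g)\vec\eta\}$ is a Riesz sequence, so $A^{(k)}$ carries one Riesz basis onto another. Using the finiteness of $\pi(G)'$ — exactly the mechanism of Lemma~\ref{lem2.1} — this one-sided boundedness upgrades to invertibility of $A$ on $H$, and then Lemma~\ref{param-lem}(ii) for $\pi^{(k)}$ makes $\{\pi^{(k)}(g)\vec\xi\}$ a frame for $H^{(k)}$; by the Han--Larson equivalence this says precisely that $\vec\xi$ is a super-frame vector for $\pi$, which in particular gives $M_i\perp M_j$.

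The main obstacle is this last orthogonality step in the ``if'' part of (ii). Being a Riesz sequence does not by itself force the $\sigma$-invariant blocks $[\sigma(G)\xi_j]$ to be mutually orthogonal, so the orthogonality has to be extracted from the commutant-pair structure, and the difficulty is that the ``change-of-window'' operators that normalize a frame (respectively Riesz) generator to its canonical Parseval form live in $\pi(G)''$ (respectively $\sigma(G)''$) and commute with only one of $\pi(G)$, $\sigma(G)$; one therefore cannot simply conjugate the statement down to the Parseval case handled by (i). The route above gets around this by inflating to the representations $\pi^{(k)}$ and $\sigma^{(k)}$, where these operators become block-scalar $A^{(k)}$, and by invoking the finiteness of $\pi(G)'$ to turn one-sided bounded
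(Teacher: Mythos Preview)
Your treatment of (i) and of the ``only if'' half of (ii) is correct and essentially the same as the paper's (your ``only if'' is in fact a slightly cleaner direct argument than the paper's normalization to the Parseval case).

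The ``if'' half of (ii) has a genuine gap. After producing $\vec\eta$ from Lemma~\ref{lem2.3} and invoking Lemma~\ref{param-lem}(iii) for $\pi^{(k)}$ to write $\vec\xi=A^{(k)}\vec\eta$ with $A\in\pi(G)''$, you claim that ``the mechanism of Lemma~\ref{lem2.1}'' upgrades the Riesz-to-Riesz mapping to invertibility of $A$ on $H$. It does not. First, $A$ is not uniquely determined: $\vec\eta$ is separating for $\pi^{(k)}(G)''=\{B^{(k)}:B\in\pi(G)''\}$ only when $\sum_j[\sigma(G)\eta_j]=H$, i.e.\ when $M_0=H$; otherwise any $A$ that vanishes on $M_0$ can be added, so some choices of $A$ are certainly not invertible. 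Second, the proof of Lemma~\ref{lem2.1} needs a frame lower bound on \emph{all} of $H$ to get $\|A^*x\|\ge c\|x\|$ for every $x$; here $\cup_j\{\sigma(g)\xi_j\}$ is only a frame for $M_0$, so running the same computation yields only $\|A^*x\|\ge c\|P_{M_0}x\|$, which says nothing for $x\in M_0^\perp$. Equivalently, $A^{(k)}$ mapping one Riesz sequence onto another gives boundedness below of $A^{(k)}$ only on the subspace $[\sigma^{(k)}(G)\vec\eta]\subsetneq H^{(k)}$, and finiteness of $\pi(G)''$ cannot promote boundedness below on a proper (non-reducing) subspace to global invertibility.

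The paper closes exactly this gap by constructing the intertwiner explicitly rather than pulling it from Lemma~\ref{param-lem}. With $S=\sum_{j}\Theta_{\xi_j,\sigma}^{*}\Theta_{\xi_j,\sigma}\in\sigma(G)'=\pi(G)''$ and $K=M_0$, one checks $S$ leaves $K$ invariant, so $A:=(S|_{K})^{-1/2}\oplus P^{\perp}$ lies in $\pi(G)''$ and is invertible \emph{by construction}; moreover $A\xi_j=(S|_K)^{-1/2}\xi_j$ are precisely your $\eta_j$ (indeed, tracing through the polar decomposition in Lemma~\ref{lem2.3} gives $\eta_j=(S|_K)^{-1/2}\xi_j$). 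Thus $\vec\xi=(A^{-1})^{(k)}\vec\eta$ with $A^{-1}\in\pi(G)''$ invertible, and Lemma~\ref{param-lem}(ii) finishes. Your route can be repaired by making this explicit choice of $A$ instead of appealing to Lemma~\ref{param-lem}(iii).
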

\begin{proof} 

 (i) First assume that $\vec{\xi}$ is a  complete Parserval super-frame vector for $\pi$. Then each $\xi_{i}$ is a complete Parserval frame vector for $\pi$, and $\xi_{i}$ and $\xi_{j}$ are $\pi$-orthogonal for $i\neq j$. Thus, by Theorem  \ref{thm-main1} and Lemma \ref{sublem2.4}, we get that $\{\sigma(g)\xi_{i}\}_{g\in G}$ is an orthogonal sequence, and $[\pi(G)'\xi_{i}] \perp [\pi(G)'\xi_{j}]$ for $i\neq j$. Therefore we have that $\{\sigma(g)\xi_{j}: g\in G, j = 1, ..., k\}$ is an orthogonal sequence in $H$. The identity follows from Lemma  \ref{lem-2.2} and the fact that each $\xi_{i}$ is a complete Parseval frame vector for $\pi$. Clearly the above argument is reversible, and so we also get the sufficiency part of the proof.

(ii) First assume that $\vec{\xi}$ is super-frame vector for $\pi$. By Lemma \ref{param-lem}, there exists an invertible operator $B =  A\oplus .... \oplus A \in w^{*}(\pi^{(k)}(G))$ such that $B\vec{\xi }$ is a Parserval super-frame vector for $\pi$. This implies by (i) that $\cup_{i=1}^{k}\{\sigma(g)A\xi_{i}\}_{g\in G}$ is an orthogonal sequence. Since $A\in w^{*}(\pi(G)) =  \sigma(G)'$  is invertible, we get that $\sigma(g)\xi_{i} = A^{-1}\sigma A\xi_{i}$, and therefore 
$$\{\sigma(g)\xi_{i}: g\in G, i=1, ... ,k\} = A^{-1} \{\sigma(g)A\xi_{i}: g\in G, i=1, ... ,k\} $$
is a Riesz sequence.

Conversely assume that  $\{\sigma(g)\xi_{j}: g\in G, j = 1, ..., k\}$ is Riesz sequence in $H$. Let $K$ be the closed subspace generated by  $\{\sigma(g)\xi_{j}: g\in G, j = 1, ..., k\}$  and  let $S = \Theta_{\vec{\xi}, \sigma}^{*}\Theta_{\vec{\xi}, \sigma}$.
$$
S = \sum_{i=1}^{k} \Theta_{\xi_{i}, \sigma}^{*}\Theta_{\xi_{i}, \sigma}.
$$
Note that since $ \Theta_{\xi_{i}, \sigma}^{*}\Theta_{\xi_{i}, \sigma}\in \sigma(G)' = w^{*}(\pi(G))$, we obtain that $S\in w^{*}(\pi(G))$.
Moreover, $S|_{K}: K\rightarrow K $ is positive invertible.  Write $T = (S|_{K})^{-1/2}$. Then $T$ commutes with $\sigma(g)$ when restricted to $K$ for all $g\in G$. Thus we obtain that 
$$
\cup_{i=1}^{k}\{\sigma(g)T\xi_{i}: g\in G\} = T\cup_{i=1}^{k}\{\sigma(g)\xi_{i}: g\in G\}
$$
is an Parseval frame for $K$. Since $T$ is invertible and  $\{\sigma(g)\xi_{j}: g\in G, j = 1, ..., k\}$ is Riesz basis for $K$, we get that $\cup_{i=1}^{k}\{\sigma(g)T\xi_{i}: g\in G\}$ is an orthogonal basis for $K$. Select $c_{i} > 0$ such that $c_{i}^{2}||T\xi_{i}||^{2} = Tr_{\pi(G)'}(I)$ and write $\eta_{i} = c_{i}T\xi_{i}$. Then  $\cup_{i=1}^{k}\{\sigma(g)\eta_{i}: g\in G\}$ is an orthogonal sequence with $||\eta_{i}||^{2} = Tr_{\pi(G)'}(I)$. Thus, by (i) we get that 
$\eta = (\eta_{1}, ... , \eta_{k})$ is a complete Parserval super-frame vector for $\pi$. This implies that  $\vec{\phi} := (T\xi_{1}, .... , T\xi_{k})$ is also a complete super-frame vector for $\pi$.

Let $P$ be the orthogonal projection from $H$ onto $K$. Then $P\in w^{*}(\pi(G))$ since $K$ is invariant under $\sigma(G)$.  Define $A = T \oplus P^{\perp}$ and 
$$
B =  A\oplus .... \oplus A.
$$
Then $B\in w^{*}(\pi^{(k)}(G))$ is invertible  and $B\vec{\xi} = \vec{ \phi}$. This implies by Lemma \ref{param-lem} that $\vec{\xi} = B^{-1}\vec{\phi}$ is a complete frame vector for $\pi^{(k)}$, i.e., $\xi$ is super-frame vector for $\pi$. 
\end{proof} 


\noindent{\bf A Final Remark:} The mains results of this paper have been presented by Deguang Han at several conferences including  the 2015  workshop on ``Aperiodic Order and Signal Analysis" at  Norwegian University of Science and Technology and  the FFT talk at the University of Maryland in 2016. In a recent preprint \cite{Franz} Jakobsen and Luef  are also able to get Corollary \ref{coro-1} by using a complete different approach. The purposes of the two approaches are also quite different. This paper is focused on establishing the general duality principle for arbitrary groups and on building its connections with the theory of operator algebras and group representations. In this context the duality principle for Gabor representation is only a special case of a more general duality phenomenon for arbitrary projective unitary representations. Paper \cite{Franz} is focused on examining the Gabor theory from the perspective of projective modular theory and it  (at least in the discrete setting) only applies to Gabor representations.




%
%
%

\end{document}